\newtheorem{theo}{Théorème}[section]
\newtheorem{cor}[theo]{Corollaire}
\newtheorem{prop}[theo]{Proposition}
\newtheorem{lem}[theo]{Lemme}
\newtheorem{conj}[theo]{Conjecture}
\theoremstyle{definition}
\newtheorem{defn}[theo]{Définition}
\newtheorem{remark}[theo]{Remarque}
\numberwithin{equation}{section}
\newcommand\R{\mathbb{R}}
\newcommand\C{\mathbb{C}}
\newcommand\Z{\mathbb{Z}}
\newcommand\N{\mathbb{N}}
\newcommand\1{\mathbb{1}}
\newcommand{\tendvers}[2]{\underset{#1 \to #2}{\longrightarrow}}
\newcommand{\ssum}[1]{\sum_{\substack{#1}}}
\newcommand{\dd}{\,\mathrm{d}}
\newcommand{\ee}{\mathrm{e}}
\newcommand{\oo}[1]{\mathopen{]}{#1}{\mathclose{[}}}
\newcommand{\of}[1]{\mathopen{]}{#1}{\mathclose{]}}}
\newcommand{\fo}[1]{\mathopen{[}{#1}{\mathclose{[}}}
\begin{document}


\baselineskip=17pt


\title[Un crible minorant effectif pour les entiers friables]{Un crible minorant effectif pour les entiers friables}

\author[Adrien Mounier]{Adrien Mounier}
\address{Institut de Mathématiques de Marseille\\ Aix-Marseille Université\\
163 Av. de Luminy\\
13009 Marseille, France}
\email{adrien.mounier@univ-amu.fr}

\date{}

\begin{abstract}
Let $\mathcal{A}$ 
be a finite set of positive integers and $y\geq 1$. We give an effective lower bound of the cardinality of the set $\{n\in\mathcal{A};\,p|n\Longrightarrow p\leq y\}$ under the condition of a good knowledge of the level of distribution of the set $\mathcal{A}$. Some consequences are studied: an application to the friable values of irreducible polynomials or binary forms with integer coefficients, and an application to shifted friable numbers.
\end{abstract}

\subjclass[2020]{Primary 11N25; Secondary 11N32; Tertiary 11N35}

\keywords{Friable numbers, sieves, effective lower bound}

\maketitle

{ \hypersetup{hidelinks} \tableofcontents }

\section{Introduction}
Notons $P^+(n)$ le plus grand facteur premier d'un entier naturel $n\geq 1$ avec la convention~$P^+(1) := 1$. Pour $y\geq 1$, l'entier $n$ est dit ``$y$-friable'' (ou $y$-smooth) lorsque $P^+(n) \leq y$.
L'ensemble des nombres premiers est  noté $\mathcal{P}$ et la lettre~$p$ désigne toujours un de ses éléments, ainsi que les notations~$p_1,p_2,\ldots$  Soit~$\mathcal{A}$ un ensemble fini non vide d'entiers naturels non nuls. Le but de cet article est de trouver une minoration explicite du nombre d'entiers $y$-friables dans~$\mathcal{A}$:  $$\Psi_\mathcal{A}(y) := \#\{n\in\mathcal{A}:P^+(n)\leq y\}.$$ 
Notons 
$$x := \sup \mathcal{A}\quad\text{et}\quad u:=\frac{\log x}{\log y}.$$
En particulier si~$\mathcal{A} = \N\cap [1,x]$, alors $\Psi_{\mathcal{A}}(y)$ devient la fonction sommatoire des entiers $y$-friables:
$$\Psi(x,y):=\#\{1\leq n\leq x:P^+(n)\leq y\}.$$
Cette fonction a fait l'objet d'une abondante littérature, à commencer par Dickman~\cite{Dic30} qui a montré en 1930 que pour tout $u > 0$ fixé, 
\begin{equation}\label{eq:Dic1}
    \Psi(x,x^{\frac{1}{u}}) \sim x\varrho(u)
\end{equation}
où $\varrho$, appelée fonction de Dickman, est l'unique fonction définie sur $\R_+$, continue, dérivable sur $]1,+\infty[$, valant~$1$ sur $[0,1]$ et vérifiant l'équation différentielle aux différences
$$u\varrho'(u) + \varrho(u-1) = 0\quad\quad(u>1).$$
Puis en 1986, suite notamment aux travaux de de Bruijn \cite{dBr66}, Hildebrand \cite{Hil86} élargit le domaine de validité de \eqref{eq:Dic1} et donne un ordre de grandeur du reste. Plus précisément, il fournit
\begin{equation}\label{eq:Hil1}
    \Psi(x,y) = x\varrho(u)\left\{1+O\left(\frac{\log(u+1)}{\log y}\right)\right\},
\end{equation}
valable dans le domaine 
$$\exp((\log\log x)^{\frac{5}{3}+\varepsilon})\leq y\leq x.$$
Des estimations encore plus précises ont ensuite fait l'objet de plusieurs articles, notamment celui de Saias \cite{Sai89} puis Hildebrand et Tenenbaum \cite{HT93}. Pour plus de détails sur la fonction $\Psi$, nous recommandons au lecteur le chapitre III.5 de \cite{Ten15}.\\

Revenons dans le cas général d'un ensemble fini d'entiers naturels $\mathcal{A}$ quelconque et supposons désormais que $u$ est dans un compact de $\fo{1,+\infty}$.
Si $\mathcal{A}$ n'a pas de structure multiplicative particulière et n'est pas trop épars, une heuristique nous suggère que
\begin{equation}\label{eq:heuristiquegenerale}
    \Psi_{\mathcal{A}}(y) \sim \varrho(u)\#\mathcal{A}
\end{equation} 
Nous minorons ici $\Psi_{\mathcal{A}}(y)$ dans le cas où le cardinal 
$$A_d := \#\{n\in\mathcal{A}:d\mid n\}\quad\quad(d\geq 1)$$
peut être estimé avec une certaine uniformité en $d$. Soient $X,\kappa$ des réels positifs et~$\gamma$ une fonction arithmétique multiplicative vérifiant les conditions suivantes
\begin{equation}\label{cond1}
    \exists A\geq 0,\,\forall z\geq 2,\,\left|\sum\limits_{p\leq z} \frac{\log p}{p}\gamma(p) - \kappa\log z \right|\leq A,\tag{A$_\kappa$}
\end{equation}
\begin{equation}\label{cond2}
    \exists B\geq 0,\,\forall p\in\mathcal{P},\,\forall\alpha\in\N^*,\,\gamma(p^\alpha) \leq B.\tag{B}
\end{equation}
Notons, pour $d\geq 1$
    $$r_d := A_d -\frac{\gamma(d)}{d}X,$$
et supposons qu'il existe $\theta\in\oo{1-\frac{1}{u},1}$ tel que 
\begin{align}
    \exists\delta>0,\;\sum_{d\leq y^{u\theta}}|r_d| \ll \frac{X}{(\log y)^\delta}\label{cond3}\tag{C$_\theta$}.
\end{align}
En particulier, en ne gardant dans la somme que le terme $d = 1$, nous constatons que $X$ doit être choisi tel que $X \sim \#\mathcal{A}$.
Le résultat principal se résume dans le théorème suivant: 
\begin{theo}\label{theo:principal}
   Soient $u\geq 1$ et $\theta\in\oo{0,1}$ tels que $(1-\theta)u < 1$. Alors sous les conditions~\eqref{cond1},~\eqref{cond2} et~\eqref{cond3}, 
    \begin{align*}
        \Psi_{\mathcal{A}}(y) \geq & \{\varrho_{\theta,\kappa}(u) + o(1)\}\#\mathcal{A}
    \end{align*}
    lorsque $x\to\infty$, où le coefficient $\varrho_{\theta,\kappa}(u)>0$ est défini en~\eqref{eq:rhotheta}. La convergence est uniforme pour~$u$ borné. De plus, $\varrho_{\theta,1}(u)\to\varrho(u)$  (fonction de Dickman) lorsque~$\theta~\to~1$. 
\end{theo}
\begin{remark}\label{rem:conda1}
    Dans la pratique, il peut être utile de remplacer la condition~\eqref{cond3} par la condition plus faible 
    \begin{equation}
        \exists \delta>0,\,\forall a\in\Z^*,\,\forall k\geq 1,\quad\sup_{\substack{U\subset \mathcal{S}(u\theta)\\ U\text{ convexe}}}\Bigg|\ssum{(\frac{\log p_1}{\log y},\ldots,\frac{\log p_k}{\log y})\in U\\(a,p_j)=1}r_{p_1\cdots p_k}\Bigg|\ll_{k,a} \frac{X}{(\log y)^\delta}\label{cond4},\tag{D$_\theta$}
    \end{equation}
    où $\mathcal{S}(\sigma) := \{\mathbf{s}\in[0,1]^k;\,\sum_is_i\leq \sigma\}$. La condition \eqref{cond4} a l'avantage de supprimer les valeurs absolues de $r_d$ dans \eqref{cond3}, ce qui pourrait offrir la possibilité d'exploiter la structure multilinéaire de la somme sur $p_1,\ldots,p_k$. De plus, la somme porte sur des entiers ayant un nombre fixé de facteurs premiers comptés avec multiplicité, cela sera utile dans une de nos applications (le Théorème \ref{theo:HaretPas}).
    Le fait que la condition~\eqref{cond4} reste suffisante est détaillé dans la démonstration du Théorème~\ref{theo:principal}.
\end{remark}

\begin{remark}
Alors que les méthodes de cribles habituelles s'attachent à estimer des sommes portant sur des entiers sans petits facteurs premiers, le Théorème \ref{theo:principal} fournit une minoration pour des sommes portant sur des entiers sans grands facteurs premiers. Les conditions \eqref{cond1} et \eqref{cond2} se retrouvent dans les deux cas (voir par exemple l'équation $(3.17)$ et Lemma $2.2$ du livre de Halberstam et Richert \cite{HR74} ou par exemple les équations $(4.8)$, $(4.12)$, $(5.34)$, ou la Proposition $3.4$ du livre de Friedlander et Iwaniec \cite{FI10}).

\end{remark}

Bien qu'ayant une formulation explicite, les fonctions $\varrho_{\theta,\kappa}$ qui interviennent dans le théorème ci-dessus ne sont en général pas simples à calculer directement. Ceci est cependant possible pour certaines valeurs de $u$ et $\theta$ qui se rencontrent dans un grand nombre d'applications.

\begin{prop}[Corollaire~\ref{cor:rhotheta}]
 Soient $u\geq 1$ et $\theta\in\mathopen{]}0,1\mathopen{]}$ des réels tels que~$(1-\theta)u \leq 1$. Soit $\varepsilon>0$. La fonction $\varrho_{\theta,\kappa}$ définie en \eqref{eq:rhotheta} et intervenant au Théorème~\ref{theo:principal} vérifie:
        \begin{itemize}
            \item Si $0<\theta \leq \frac{1}{2}$,
        \begin{equation}\label{eq:theta12}
            \varrho_{\theta,\kappa}(u) = 
                \kappa(1-(1-\theta)u)u^{\kappa-1},\quad\quad\quad\left(1<u\leq \tfrac{1}{1-\theta}\right).
        \end{equation}
        \item Si $\frac{1}{2}<\theta \leq \frac{2}{3}$,
        \begin{equation}\label{eq:theta23}
            \varrho_{\theta,1}(u) = \begin{cases}
                1-(1-\theta)u&\quad\text{si } 1<u\leq \frac{1}{\theta},\\
                1-(1-\theta)u+\log\left(\frac{1}{u}+1-\theta\right)&\quad\text{si } \frac{1}{\theta}<u\leq 2,\\
                1-(1-\theta)u+\log\left(\frac{1}{u}+1-\theta\right) + (1-\theta)u\log\frac{u}{2}&\quad\text{si } 2<u\leq \frac{1}{1-\theta}.
            \end{cases}
        \end{equation}
        \item Si $\theta = 1-\varepsilon$,
        \begin{equation}\label{eq:theta1}
            \varrho_{\theta,1}(u) = \varrho(u) + O(\varepsilon)\quad\text{si } 0<u.
        \end{equation}
        \end{itemize}
\end{prop}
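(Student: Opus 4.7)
I would work directly from the defining expression~\eqref{eq:rhotheta}, exploiting the fact that it represents $\varrho_{\theta,\kappa}$ as a truncated Buchstab-type iteration whose $k$-th layer is an integral over a simplex of the form $\{\mathbf{t}\in \R_{\geq 1}^k: t_1+\cdots+t_k \leq u\theta\}$. The hypothesis $(1-\theta)u \leq 1$ forces $u\theta \leq \theta/(1-\theta)$, so the accessible depth $k$ is bounded by a small integer in each of the three cases, and the series collapses to a finite, explicitly integrable sum. In the first case $\theta \leq 1/2$, one has $u\theta \leq 1$, so the simplex is empty for $k \geq 2$ and only the $k=0$ and $k=1$ contributions survive. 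A direct integration over the non-sievable range $\fo{u\theta,u}$ produces the linear factor $1-(1-\theta)u$; multiplying by the leading term $u^{\kappa-1}$ and the factor $\kappa$ coming from the inversion of the defining convolution equation yields~\eqref{eq:theta12}.

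\textbf{Second case.} When $1/2 < \theta \leq 2/3$, the simplex at depth $k=2$ becomes non-empty ($u\theta$ may reach $2$). I would split $\oo{1,1/(1-\theta)}$ into three subintervals dictated by the positions of the thresholds $u\theta = 1$ and $u = 2$ relative to the integration region. On $\oo{1,1/\theta}$, $u\theta \leq 1$ and the analysis of the first case applies verbatim, giving $1-(1-\theta)u$. On $\oo{1/\theta, 2}$, the depth-$2$ region becomes accessible but the inner variable is constrained by $t_2 \leq u - 1 \leq 1$; the resulting double integral evaluates to $\log(1/u + 1 - \theta)$, producing the middle piece of~\eqref{eq:theta23}. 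On $\oo{2, 1/(1-\theta)}$ the full depth-$2$ region contributes, adding the $(1-\theta)u\log(u/2)$ correction. In each subrange the double integral $\iint \frac{\dd t_1\,\dd t_2}{t_1 t_2}$ over the relevant polygonal region is elementary.

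\textbf{Third case and main obstacle.} For $\theta = 1-\varepsilon$, I would estimate $\varrho_{\theta,1}(u)-\varrho(u)$ term-by-term: the series for $\varrho_{\theta,1}$ coincides with the iterative representation of Dickman's function, except that the constraint $\sum_i t_i \leq u\theta = u(1-\varepsilon)$ replaces $\sum_i t_i \leq u$. The symmetric difference of these two regions has measure $O(\varepsilon)$ at each depth, and combined with the rapid decay of the Dickman-type integrand and the uniform boundedness of $u$, this yields~\eqref{eq:theta1}. The main technical point is the second case: producing the three-piece formula~\eqref{eq:theta23} requires careful identification of which portion of the polygonal integration domain contributes in each subrange, together with clean bookkeeping of the resulting logarithmic primitives. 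The first and third cases are then essentially routine calculations.
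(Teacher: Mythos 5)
Il y a une lacune réelle, et elle est structurelle. Votre plan repose sur l'affirmation que, sous l'hypothèse $(1-\theta)u\leq 1$, les domaines d'intégration de profondeur $k\geq 2$ (resp. $k\geq 3$ dans le deuxième cas) sont vides, de sorte que la série définissant $\varrho_{\theta,\kappa}(u)=\mu^{(\kappa)}(u,(1-\theta)u)$ se réduirait à un nombre fini de termes. C'est faux : dans la définition~\eqref{eq:defmuk}, les variables $t_i$ parcourent $\oo{0,1}^k$ (elles correspondent à des premiers $p_i\leq y$), et non $\R_{\geq 1}^k$ comme vous l'écrivez ; la contrainte $t_1+\cdots+t_k<u\theta\leq 1$ ne vide donc nullement $V_k(u,v)$ pour $k\geq 2$. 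Concrètement, pour $1<u\leq 1/\theta$ et $v=(1-\theta)u$, la récurrence~\eqref{eq:recmuk} donne
\begin{equation*}
u\,\mu_k(u,v)=\bigl(1-(1-\theta)u\bigr)\frac{(\log u)^{k-1}}{(k-1)!}\neq 0\qquad(k\geq 1),
\end{equation*}
et la formule~\eqref{eq:theta12} ne s'obtient qu'en sommant \emph{toute} la série exponentielle : $u\sum_k\kappa^k\mu_k=\kappa(1-v)u^{\kappa}$. Votre troncature à $k\leq 1$ donnerait $\kappa(1-v)/u$ au lieu de $\kappa(1-v)u^{\kappa-1}$, et le facteur $u^{\kappa-1}$ ne provient d'aucune « inversion de l'équation de convolution » mais précisément de cette sommation. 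Le même défaut invalide votre traitement du deuxième cas (toutes les profondeurs contribuent aussi sur $\oo{1/\theta,1/(1-\theta)}$).

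La démonstration du papier contourne ce point en établissant d'abord (Proposition~\ref{prop:mu}) les équations intégrales puis différentielles aux différences satisfaites par la somme complète $\mu^{(\kappa)}(\cdot,v)$, en les résolvant explicitement pour $u\leq 2$ (et une itération de plus pour $2<u\leq 3$), puis en substituant $v=(1-\theta)u$ ; le troisième point découle de l'encadrement~\eqref{eq:distrhomu}, $0\leq\varrho(u)-\mu^{(1)}(u,v)\leq v/\Gamma(u+1)$, qui avec $v=\varepsilon u$ donne $O(\varepsilon)$ uniformément en $u$ via $u/\Gamma(u+1)=1/\Gamma(u)$ — noter au passage que votre appel à « $u$ borné » n'est pas disponible ici, puisque la contrainte $(1-\theta)u\leq 1$ n'impose que $u\leq 1/\varepsilon$. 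Votre approche directe par la série serait viable, mais elle revient à refaire le calcul de tous les $\mu_k$ et à resommer ; telle qu'elle est décrite, elle repose sur une lecture erronée du domaine $V_k(u,v)$ et aboutirait à des formules incorrectes.
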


Nous énonçons maintenant quelques conséquences intéressantes du Théorème~\ref{theo:principal}.

\subsection{Valeurs de polynômes}
Une première application est de se placer dans le cas où $\mathcal{A}$ est un ensemble de valeurs polynomiales: si $F\in\Z[X]$, il est naturel de s'intéresser aux propriétés multiplicatives de la suite $(F(n))_{n\in\N}$. Nous étudions ici la quantité
$$\Psi_F(x,y):=\#\{n\leq x:F(n)\neq 0,\;P^+(|F(n)|)\:\leq y\}.$$
Supposons que le polynôme $F$ est tel que les propriétés multiplicatives respectives des ses facteurs irréductibles  sont indépendantes les unes des autres. L'heuristique \eqref{eq:heuristiquegenerale} nous suggère alors l'équivalent asymptotique suivant (au moins pour $u\geq 1$ borné):
$$\Psi_F(x,y) \sim x\varrho(ud_1)\cdots\varrho(ud_k)$$
où $F=F_1\cdots F_k$ est la décomposition en produit de facteurs irréductibles de $F$ et $d_i=\deg(F_i)$. Plus précisément, Martin (\cite{Mar00}, Theorem 1.1) montre que sous une certaine hypothèse (notée UH \emph{ibid}), version quantitative de l'hypothèse H de Schinzel et Sierpiński, nous avons l'estimation souhaitée dans un certain domaine en $u$ et avec un terme d'erreur dépendants des paramètres du polynôme. En ce qui concerne les résultats inconditionnels sur la quantité $\Psi_F(x,y)$, nous disposons du théorème suivant dû à Dartyge, Martin et Tenenbaum:
\begin{theo}[\cite{DMT01}, Theorem $1.1$]\label{theo:DMT}
    Soit $\varepsilon>0$. Notons $g$ le degré du plus grand facteur irréductible de~$F$ et $k$ le nombre de facteurs irréductibles de~$F
    $ de degré $g$, alors 
    \begin{equation}\label{eq:DMT}
        \Psi_F(x,y) \asymp x
    \end{equation}
    lorsque $x$ est suffisamment grand, pourvu que $y\geq x^{g+\varepsilon-1/k}$.
\end{theo}

La force de cet énoncé réside dans le fait que $F$ est arbitraire mais le prix à payer est que les constantes de minoration sont très petites. Grâce au Théorème~\ref{theo:principal}, nous fournissons des constantes explicites d'un ordre de grandeur satisfaisant (voir théorème \ref{theo:Fn}) lorsque $F$ est irréductible. Commençons par un cas particulier classique pour fixer les idées et prenons $F(X) = X^2+1$. Le théorème~\eqref{theo:DMT} énonce alors que pour tout~$\varepsilon>0$, $\Psi_F(x,y)\asymp x$ dès que~$y\geq x^{1+\varepsilon}$.
Nous précisons la minoration dans le résultat suivant: 
\begin{theo}\label{theo:ncarre}
    Soit $F(X) = X^2+1$. Si $x < y \leq x^2$, alors 
    $$\Psi_F(x,y) \geq \{1-u+o(1)\}x$$
    lorsque $x\to\infty$. (Notons que $u:=\frac{\log x}{\log y}\in\fo{\frac{1}{2},1}$).
\end{theo}
Notons que dans ce cas particulier, Harman \cite{Har24} a montré que la minoration $\Psi_F(x,y)\gg x$ est valable pour $u< \frac{381}{356}\ee^{1/2}\approx 1.76$, ce qui est une amélioration significative des travaux précédents, qui commencèrent avec celui de Dartyge: $u<\frac{179}{149}\approx 1.20$ \cite{Dar96}, puis Harman:~$u<1.25$~\cite{Har08}. Les systèmes de poids utilisés dans ces travaux permettent d'élargir le domaine en $u$ au prix de réduire la constante implicite de minoration.

\begin{remark}
    Le coefficient de minoration $1-u$ du Théorème \ref{theo:ncarre} est issu de l'exposant de répartition $1/2-\varepsilon$ de la suite $(n^2+1)_{n\geq 1}$ dans les suite arithmétiques. Grâce aux travaux de Hooley \cite{Hoo67}, puis Deshouillers et Iwaniec \cite{DI82}, la Bretèche et Drappeau \cite{DdlB19}, Merikoski \cite{Mer23} et Pascadi \cite{Pas25}, il est sûrement possible d'obtenir un meilleur exposant de répartition et donc d'améliorer le coefficient de minoration. 
\end{remark}

Le théorème \ref{theo:ncarre} se généralise dans le résultat suivant.
\begin{theo}\label{theo:Fn}
    Soit $F\in Z[X]$ irréductible, de degré $g\geq 2$. La minoration
    $$\Psi_F(x,y) \geq \{1+u-gu+o(1)\}x $$
    est valable lorsque $x$ est suffisamment grand, pourvu que $y>x^{g-1}$. (Notons que $u:=\frac{\log x}{\log y}\in\fo{\frac{1}{g},\frac{1}{g-1}}$).
\end{theo}

\begin{remark}
    Le théorème \ref{theo:Fn} ci-dessus ne donne une minoration effective que pour des polynômes irréductibles, alors que le théorème~\ref{theo:DMT} traite également le cas où le polynôme $F$ a plusieurs facteurs irréductibles. Il est néanmoins possible, à priori, d'adapter notre méthode pour étendre notre résultat à ce cas général et d'obtenir 
    $$\Psi_F(x,y) \geq \{\rho_k(u,g) + o(1)\}x$$
    valable pour $x$ suffisamment grand et dans un domaine~$y>x^{g-1/k+\varepsilon}$ où $k$ désigne le nombre de facteurs irréductible de degré $g$ de $F$. Cela fera l'objet d'un futur travail.
\end{remark}

\begin{remark}
    En ce qui concerne la majoration de $\Psi_F(x,y)$, un récent travail de la Bretèche et Drappeau \cite{DdlB19} fournit
$$\Psi_F(x,y) \ll x\varrho(u)^{c_F+o(1)}$$
lorsque $x\to\infty$ et $(\log x)^{\psi(x)}\leq y\leq x$ (où $\psi(x)\to\infty$) 
et $c_F\geq 1$ une constante dépendant uniquement des facteurs irréductibles de $F$ de petits degrés. Il est à noter que $c_F=1$ lorsque tous les facteurs irréductibles de $F$ ont des degrés supérieurs ou égaux à $3$. Ce cas particulier a été initialement découvert par Khmyrova \cite{Khm64}.
\end{remark}

\subsection{Valeurs de formes binaires}
Une deuxième application consiste à considérer les valeurs friables prises par des formes binaires à coefficients entiers. Ce sujet a été étudié en 2012 par Balog, Blomer, Dartyge, et Tenenbaum dans \cite{BBDT12}.\\
Pour $F\in\Z[X,Y]$, notons encore 
$$\Psi_F(x,y) := \#\{1\leq a,b\leq x:F(a,b)\neq 0,\;P^+(|F(a,b)|)\leq y\}.$$
Les auteurs montrent le résultat suivant:

\begin{theo}[\cite{BBDT12}, Theorem 1]\label{theo:BBDT}
Soit $\varepsilon>0$ et $F(X,Y)$ une forme binaire à coefficients entiers, de degré $t\geq 2$ sans facteurs carrés. Notons $g$ le plus grand degré des facteurs irréductibles de $F$ et~$k$ (resp. $\ell$) le nombre de facteurs irréductibles de degré $g$ (resp. $g-1$). L'estimation
$$\Psi_F(x,y) \asymp_{F,\varepsilon}x^2$$
est valable pour $x$ suffisamment grand pourvu que $y \geq x^{\alpha_F+\varepsilon}$ où 
$$\alpha_F := \begin{cases}
    g-2/k,&\quad\text{si }k\geq 2,\\
    g-1-1/(\ell+1),&\quad\text{si }k=1\text{ et }(g,t)\neq(2,3),\\
    2/3,&\quad\text{si }(g,k,t)=(2,1,3).\\
\end{cases}$$
\end{theo}

En particulier, si la forme binaire $F$ est irréductible, alors $t=g$ et~$(k,\ell)=(1,0)$, donc~$\alpha_F = t-2$.
Dans ce cas, nous pouvons préciser le théorème ci-dessus grâce au Théorème~\ref{theo:principal}:

\begin{theo}\label{theo:Formebinaire}
    Soit $F(X,Y)$ une forme binaire à coefficients entiers, de degré~$t\geq 4$, irréductible. Avec les notations du théorème précédent, la minoration
    \begin{align*}
        \Psi_F(x,y) &\geq \{1-(t-2)u+o(1)\}x^2.\\
    \end{align*}
    est valable lorsque $x\to\infty$ et dans le domaine~$x^{t-2}\leq y$, \emph{i.e}~$u(t-2)\leq 1$.
\end{theo}
\begin{remark}\label{rem:Lachand}
    Dans le cas où $y\gg x^t$, l'estimation devient triviale. Par ailleurs, les cas des formes binaires irréductibles de degré $2$ et $3$ ont été étudiés par Lachand dans sa thèse \cite{Lac14}, puis dans~ \cite{Lac15,Lac17,Lac18}. En particulier dans sa thèse, il établit un équivalent asymptotique de~$\Psi_F(x,y)$ et dans des domaines beaucoup plus larges que le nôtre:\\
    
    Soit $\varepsilon>0$.
    \begin{itemize}
        \item Si $t=2$, alors  
    $$\Psi_F(x,y) = \varrho(2u)x^2 + O_\varepsilon\left(\frac{x^2\log\log x}{\log y}\right)$$
    uniformément dans le domaine $x\geq 2$, $\exp((\log x)^{2/5+\varepsilon})\leq y\leq x$
    (voir \cite{Lac14}, Chapitre.2, Théorème 4.2). \\
    \item Si $t=3$, alors  
    $$\Psi_F(x,y) = \varrho(3u)x^2 + O_\varepsilon\left(\frac{x^2}{(\log\log x)^{1-\varepsilon}}\right)$$
    uniformément en $x,y\geq 3$ (voir \cite{Lac14}, Chapitre.3, Corollaire 1.3).
    \end{itemize}
\end{remark}

\begin{remark}
    Le Théorème \ref{theo:Formebinaire} traite la friabilité des formes binaires irréductibles. Plus généralement, il est naturel de s'intéresser à la friabilité des polynômes à plusieurs variables. Même si ce cas n'est pas traité ici, le lecteur intéressé pourra se référer à l'article \cite{Fou10} de Fouvry.
\end{remark}

\subsection{Entiers friables voisins}
Une autre application du Théorème~\ref{theo:principal} est de compter les entiers friables voisins. Pour $u,v\geq 1$ et $a$ un entier naturel non nul, nous étudions la quantité 
$$\Psi(x;u,v) := \{1\leq n\leq x : P^+(n)\leq x^{1/u}\mbox{ et } P^+(n+a)\leq x^{1/v}\}.$$
L'heuristique~\eqref{eq:heuristiquegenerale} inspire la conjecture suivante:  
\begin{conj}\label{conj:rhourhov}
    Pour tous $u,v\geq 1$, 
    $$\Psi(x;u,v)\sim \varrho(u)\varrho(v)x$$
    lorsque $x\to\infty$.
\end{conj}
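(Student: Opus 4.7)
Le plan se décompose en deux inégalités asymptotiques: la minoration $\Psi(x;u,v)\geq (\varrho(u)\varrho(v)+o(1))x$ et la majoration correspondante. Chacune est obtenue en fixant l'une des deux contraintes de friabilité pour définir un ensemble $\mathcal{A}$, puis en lui appliquant un crible friable (minorant ou majorant) par rapport à la seconde condition.

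Pour la minoration, on pose
$$\mathcal{A} := \{n+a;\,1\leq n\leq x,\, P^+(n)\leq x^{1/v}\},$$
de sorte que $\#\mathcal{A} = \Psi(x,x^{1/v}) = (\varrho(v)+o(1))x$ par le théorème d'Hildebrand, et $\Psi_\mathcal{A}(x^{1/u}) = \Psi(x;u,v)$. Pour $p\nmid a$, l'équirépartition attendue donne $A_p \sim X/p$, soit $\gamma(p) = 1$ et donc $\kappa=1$, de sorte que les conditions~\eqref{cond1} et~\eqref{cond2} sont standard. La condition~\eqref{cond3} avec $\theta\to 1$ équivaut à un niveau de répartition des entiers friables dans les progressions arithmétiques de module $d\leq x^\theta$ arbitrairement proche de $1$: ceci n'est connu qu'au prix d'hypothèses de type Elliott-Halberstam pour les friables. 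Sous une telle hypothèse, une application du Théorème~\ref{theo:principal} combinée à~\eqref{eq:theta1} fournit $\Psi(x;u,v) \geq (\varrho(u)\varrho(v) + o(1))x$.

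Pour la majoration, il faudrait disposer d'un analogue \emph{majorant} du Théorème~\ref{theo:principal}: un crible friable majorant dont la constante asymptotique vaut exactement $\varrho(u)$ lorsque $\theta\to 1$. Une voie naturelle est l'itération de l'identité de Buchstab, utilisée en majoration plutôt qu'en minoration, et comparée à l'équation différentielle de Dickman. Une alternative est d'invoquer les inégalités de type grand crible friable de~\citep{DdlB19}, qui fournissent cependant une constante du type $\varrho(u)^{c}$ avec $c\geq 1$ non nécessairement égal à $1$ dans ce régime, donc trop grossière pour la constante attendue.

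L'obstacle principal est précisément cette majoration fine. Aucune technique actuelle ne permet de démontrer $\Psi(x;u,v)\leq (\varrho(u)\varrho(v)+o(1))x$ avec la constante optimale pour $u,v\geq 1$ arbitraires, et ceci reste hors d'atteinte même sous des hypothèses d'Elliott-Halberstam pour les friables, en raison notamment des pertes de parité inhérentes à tout crible combinatoire. C'est précisément pour cette raison que l'énoncé est formulé comme conjecture; le plan ci-dessus ne fournirait au mieux qu'une minoration (conditionnelle) accompagnée d'une majoration strictement plus faible que celle attendue, et ne saurait donc à lui seul établir l'équivalence asymptotique souhaitée.
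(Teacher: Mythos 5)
L'énoncé visé est une conjecture (formulée par Erd\H{o}s et Pomerance en 1978) : l'article n'en propose aucune démonstration, et votre conclusion --- à savoir qu'un tel plan ne peut au mieux fournir qu'une minoration --- est donc cohérente avec le traitement du texte. Votre stratégie pour la minoration est essentiellement celle de l'article : celui-ci prend $\mathcal{A}=\{n\leq x;\,P^+(n+a)\leq x^{1/v}\}$, vérifie les hypothèses du Théorème~\ref{theo:principal} via la Proposition~\ref{prop:gammavoisins}, et obtient sous la Conjecture~\ref{conj:EH} la minoration $\Psi(x;u,v)\geq\{\varrho(u)\varrho(v)+o(1)\}x$ (Corollaire~\ref{cor:Wan}). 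Deux nuances techniques : l'article ne prend pas $\gamma(p)=1$ mais $\gamma(d)=\frac{d}{\varphi(d)}g_d(x,z)$, car la bonne approximation de $A_d$ est $\Psi_d(x,z)/\varphi(d)$ et non $\Psi(x,z)/d$ (l'écart entre $\Psi_d(x,z)$ et $\Psi(x,z)g_d(x,z)$ est contrôlé via de la Bretèche--Tenenbaum), et il travaille avec la condition affaiblie~\eqref{cond4} plutôt que~\eqref{cond3}, le niveau de répartition n'étant exploité que pour les modules premiers à $a$.

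En revanche, votre diagnostic sur la majoration comporte une erreur : vous affirmez que l'asymptotique reste hors d'atteinte ``même sous des hypothèses d'Elliott--Halberstam pour les friables'', en invoquant le problème de parité. Or l'article rappelle précisément que Wang \citep{Wan21} a démontré que, sous la Conjecture~\ref{conj:EH}, la Conjecture~\ref{conj:rhourhov} est vraie (équivalence asymptotique complète) ; ce résultat conditionnel ne repose pas sur un crible combinatoire majorant, de sorte que l'obstruction de parité que vous invoquez ne s'y applique pas. Le bilan correct est donc : inconditionnellement la conjecture est ouverte ; sous $EH_{friable}$ c'est un théorème de Wang ; la contribution de l'article est la version minorante effective, conditionnelle au Corollaire~\ref{cor:Wan} et inconditionnelle mais restreinte en $u$ au Théorème~\ref{theo:HaretPas}.
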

Cette conjecture avait initialement été formulée par Erd\H{o}s et Pomerance en 1978~\cite{EP78}.
Le Théorème~\ref{theo:principal} permet une minoration explicite. La condition~\eqref{cond4} découle d'estimations de type Bombieri-Vinogradov pour les entiers friables. Plus précisément, un théorème de Harper (\cite{Har12a}, \textit{Theorem $1$}) et un plus récent de Pascadi (\cite{Pas24}, \textit{Theorem $1.1$}) nous permettent d'énoncer le résultat suivant:

\begin{theo}\label{theo:HaretPas}
    Avec les mêmes notations que précédemment, la minoration
    $$\Psi(x;u,v)\geq \left\{f(u)+o(1)\right\}\Psi(x,x^{\frac{1}{v}})$$
    est valable lorsque $x\to\infty$, $v = o\left(\tfrac{\log x}{\log\log x}\right)$ et sous les conditions suivantes:
    \begin{itemize}
        \item Si $1<u\leq 2$ et
        \begin{equation}\label{eq:cas1HaretPas}
        f(u) = 1-\tfrac{u}{2}.
        \end{equation}
    \item Si $1<u\leq \tfrac{8}{3}$, $v\tendvers{x}{+\infty}\infty$ et \begin{equation}\label{eq:cas3HaretPas}
        f(u) =\begin{cases}
                1-\frac{3}{8}u,&\quad\text{si } 1<u\leq \frac{8}{5}\\
                1-\frac{3}{8}u+\log\left(\frac{1}{u}+\frac{3}{8}\right),&\quad\text{si } \frac{8}{5}<u\leq 2\\
                1-\frac{3}{8}u+\log\left(\frac{1}{u}+\frac{3}{8}\right) + \frac{3}{8}u\log\frac{u}{2},&\quad\text{si } 2<u\leq \frac{8}{3}
            \end{cases}
    \end{equation}
    \end{itemize}
\end{theo}

\begin{center}
\includegraphics[width=0.7\textwidth]{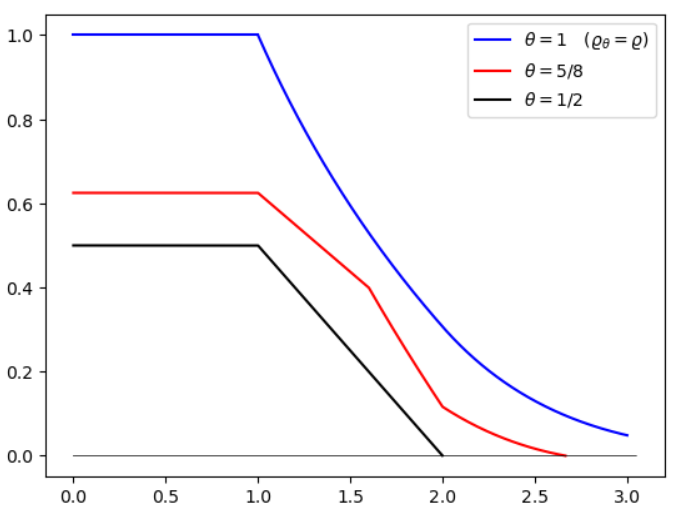}  
\end{center}

     Ci-dessus: une représentation graphique des fonctions $f$ correspondant aux deux cas ci-dessus (en noir et rouge). Les valeurs de $\theta$ indiquées en légende sont les exposants de répartition des entiers friables correspondants (voir partie~\ref{sec:voisinincond}). Le cas particulier $\theta = 1$ correspond à la valeur optimale conjecturée pour $f$, à savoir la fonction de Dickman $\varrho$. 

\begin{remark}
Dans \cite{Hil85}, Hildebrand prouve que 
$$\Psi(x;u,v)\gg x$$ lorsque~$u$ et $v$ sont bornés. Le Théorème~\ref{theo:HaretPas} est plus restrictif sur $u$ mais il a l’avantage de ne pas imposer
à $v$ d’être borné. De plus, la constante de minoration est effective.

Notons que sous la Conjecture~\ref{conj:EH} d'Elliot-Halberstam pour les entiers friables, la condition~\eqref{cond3} est vérifiée pour tout $\theta < 1$. Wang \cite{Wan21} a démontré que dans ce cas, la Conjecture~\ref{conj:rhourhov} d'Erd\H{o}s \& Pomerance est vraie. Par notre méthode, nous démontrons le corollaire suivant qui est la version minorante du résultat de Wang.
\end{remark}

\begin{cor}\label{cor:Wan}
    Soient deux réels $u\geq 1$, $v\geq 1$ et $a\in\Z^*$. Sous la Conjecture~\ref{conj:EH} d'Elliot-Halberstam pour les entiers friables, la minoration
    $$\Psi(x;u,v)\geq \{\varrho(u)\varrho(v)+o(1)\}x$$
    est valable lorsque $x\to\infty$.
\end{cor}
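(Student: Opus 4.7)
L'idée principale est d'appliquer le Théorème~\ref{theo:principal} à l'ensemble
$$\mathcal{A} := \{n+a\,;\, 1\leq n\leq x,\ P^+(n)\leq x^{1/u}\},$$
avec $y := x^{1/v}$ comme paramètre de friabilité. Dans ce cadre, $\#\mathcal{A}=\Psi(x,x^{1/u})\sim\varrho(u)\,x$ lorsque $x\to\infty$, et $\Psi_{\mathcal{A}}(y)=\Psi(x;u,v)$. Le paramètre qui joue le rôle du ``$u$'' dans le Théorème~\ref{theo:principal} est ici $\tfrac{\log\sup\mathcal{A}}{\log y}\to v$ lorsque $x\to\infty$.

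Pour vérifier les hypothèses du Théorème~\ref{theo:principal}, posons $\gamma$ la fonction multiplicative définie par $\gamma(p^\alpha) := \1_{p\nmid a}$. Les conditions~\eqref{cond1} (avec $\kappa=1$) et~\eqref{cond2} sont alors immédiates puisque $a$ n'a qu'un nombre fini de facteurs premiers. La condition~\eqref{cond3} s'obtient directement à partir de la Conjecture~\ref{conj:EH}: on a $A_d = \Psi(x,x^{1/u};d,-a)$, si bien que l'heuristique donne $A_d\approx\tfrac{\gamma(d)}{d}X$ avec $X:=\Psi(x,x^{1/u})$, et l'hypothèse d'Elliott-Halberstam pour les friables fournit
$$\sum_{d\leq x^{\theta}}\Bigl|A_d - \tfrac{\gamma(d)}{d}X\Bigr|\ll_{\theta,a,A}\frac{X}{(\log x)^A}$$
pour tout $\theta<1$ et tout $A>0$. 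Comme $y^{v\theta}=x^\theta$ avec notre paramétrage, la condition~\eqref{cond3} est ainsi établie pour tout $\theta<1$.

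Ensuite, pour tout $\theta\in\oo{1-\tfrac{1}{v},1}$, le Théorème~\ref{theo:principal} donne
$$\Psi(x;u,v)\geq\{\varrho_{\theta,1}(v)+o_\theta(1)\}\Psi(x,x^{1/u}).$$
Le Théorème~\ref{theo:principal} affirme également $\varrho_{\theta,1}(v)\to\varrho(v)$ lorsque $\theta\to 1$. Combiné à l'équivalent $\Psi(x,x^{1/u})\sim\varrho(u)\,x$, un argument diagonal (fixer $\varepsilon>0$, choisir $\theta$ suffisamment proche de $1$ tel que $\varrho_{\theta,1}(v)\geq\varrho(v)-\varepsilon$, faire tendre $x\to\infty$, puis $\varepsilon\to 0$) permet de conclure
$$\liminf_{x\to\infty}\frac{\Psi(x;u,v)}{x}\geq\varrho(u)\varrho(v).$$

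Le principal obstacle est essentiellement technique: il faut traiter soigneusement la condition~\eqref{cond3} pour les modules $d$ non premiers à $a$ (où $\gamma(d)=0$), ce qui exige de contrôler la contribution des entiers friables divisibles par $\gcd(d,a)>1$; et il faut ensuite isoler proprement la dépendance en $\theta$ des termes d'erreur du Théorème~\ref{theo:principal} afin de mener à bien le passage à la limite $\theta\to 1$. Aucune de ces étapes ne présente de difficulté conceptuelle.
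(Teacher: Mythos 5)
Votre stratégie d'ensemble est celle de l'article : appliquer le Théorème~\ref{theo:principal} à un ensemble $\mathcal{A}$ encodant l'une des deux conditions de friabilité, invoquer la conjecture d'Elliott--Halberstam friable pour le niveau de répartition, puis faire tendre $\theta$ vers $1$ via \eqref{eq:theta1}. L'échange des rôles de $u$ et $v$ (l'article prend $\mathcal{A}=\{n\leq x;\,P^+(n+a)\leq x^{1/v}\}$ et crible avec $y=x^{1/u}$) est sans conséquence ici puisque $u$ et $v$ sont tous deux fixés.

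En revanche, la vérification du niveau de répartition comporte une lacune réelle. D'abord, la condition \eqref{cond3} est fausse telle que vous l'énoncez : pour un premier $p\mid a$, on a $A_p=\#\{n\leq x;\,P^+(n)\leq x^{1/u},\,p\mid n\}\asymp X$ alors que votre modèle vaut $0$ (car $\gamma(p)=0$), de sorte que $\sum_{d\leq x^\theta}|r_d|\gg X$ et non $\ll X/(\log x)^A$. C'est précisément pour cela que l'article introduit la condition affaiblie \eqref{cond4} (Remarque~\ref{rem:conda1}), qui ne porte que sur des modules premiers à $a$ ayant un nombre borné de facteurs premiers, et c'est sous cette condition que le Théorème~\ref{theo:principal} est appliqué ici. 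Ensuite, même pour $(d,a)=1$, la Conjecture~\ref{conj:EH} compare $A_d$ à $\Psi_d(x,z)/\varphi(d)$ (avec $z=x^{1/u}$), et non à $\Psi(x,z)/d$ ; or la somme $\sum_{d\leq x^\theta}\bigl|\Psi_d(x,z)/\varphi(d)-\Psi(x,z)/d\bigr|$ n'est pas $o(X)$ en général : le rapport entre les deux densités s'écarte de $1$ d'un facteur de l'ordre de $1/\log x$ dès que $d$ possède un petit facteur premier, et $\sum_{d\leq x^\theta}1/d\asymp\log x$. L'article traite ce point en choisissant $\gamma(d)=\frac{d}{\varphi(d)}g_d(x,z)$ construit sur le point-selle $\alpha(x,z)$ et en invoquant les estimations de La Bretèche--Tenenbaum pour contrôler $\Psi_d(x,z)-g_d(x,z)\Psi(x,z)$, et même alors la majoration n'est obtenue qu'à $\Omega(d)=k$ fixé, c'est-à-dire sous \eqref{cond4} (voir la Proposition~\ref{prop:gammavoisins}). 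Ce que vous présentez comme un obstacle essentiellement technique est en réalité le contenu de cette proposition et la raison d'être de la condition \eqref{cond4}.
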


\begin{remark}
En 2022, Jiang, Lü et Wang \cite{JLW22} ont montré que la Conjecture~\ref{conj:rhourhov} est vraie pour tout $\varepsilon>0$ et presque tout $a\in\Z^*$ tel que~$|a|\leq H$ où $(\log x)^{1+\varepsilon}\leq H\leq x^{1-\varepsilon}$. 
\end{remark}

\section{Survol}

La démonstration du Théorème~\ref{theo:principal} part de l'inégalité suivante
\begin{equation}\label{eq:eqdebase}
\Psi_\mathcal{A}(y)\log x \geq \ssum{n\in\mathcal{A}\\P^+(n)\leq y}\log n\geq \sum_{p\leq y}\log p\ssum{n~:~pn\in\mathcal{A}\\P^+(n)\leq y}1
\end{equation}
où nous rappelons que $x:=\sup \mathcal{A}$. La dernière inégalité vient de l'identité de convolution $\log = 1*\Lambda$ (convolution de Dirichlet).

Cette relation est en fait une version simplifiée (exploitant uniquement la minoration) de l'identité de Hildebrand (voir (2) dans \cite{Hil86}) qui portait originellement sur l'ensemble $\mathcal{A} = \{1\leq~n\leq~x\}$. Il est à noter que l'identité de Hildebrand a également été utilisée par Granville dans \cite{Gra93b} avec~$\mathcal{A} = \{1\leq n\leq x:n\equiv a\pmod q\}$ afin d'étudier la répartition des entiers friables dans les progressions arithmétiques.\\

Pour $p\geq x/y$, les indices $n$ de la somme interne sont automatiquement $y$-friables (car inférieurs à $y$). Ainsi, 
\begin{align}\label{eq:Aplogp}
    \Psi_\mathcal{A}(y)\log x &= \ssum{p\leq y\\p\leq x/y}\log p\ssum{n~:~pn\in\mathcal{A}\\P^+(n)\leq y}1+\sum_{x/y< p\leq y}A_p\log p=:S_1 + S_2
\end{align}
À ce stade, nous minorons par $S_2$ ce qui fournit déjà une minoration effective de $\Psi_\mathcal{A}(y)$ à condition de savoir estimer $A_p$. En effet, sous les hypothèses~\eqref{cond1},~\eqref{cond2} et~\eqref{cond3}, et dans le cas optimiste où~$\theta \approx 1$, cela fournit la minoration effective
\begin{equation}\label{eq:minorefaible}
    \Psi_\mathcal{A}(y)\geq \kappa \left\{\frac{2}{u}-1+o(1)\right\}\#\mathcal{A}
\end{equation}
qui est non triviale pour $1\leq u< 2$.\\

Cependant, il est clair qu'une minoration aussi grossière ne peut donner un résultat optimal, il faut donc trouver un moyen de minorer $S_1$. Pour ce faire, nous utilisons la même minoration que dans la première inégalité de~\eqref{eq:eqdebase}, à savoir que $\log n\leq \log x$ pour $n\in\mathcal{A}$. De même,~$\log n\leq \log (x/p)$ pour $pn\in\mathcal{A}$, ce qui entraîne 
\begin{equation}\label{eq:eqdebase2}
    S_1 \geq \ssum{p\leq y\\p\leq x/y}\frac{\log p}{\log(x/p)}\ssum{n~:~pn\in\mathcal{A}\\P^+(n)\leq y}\log n.
\end{equation}
Nous pouvons alors à nouveau appliquer l'identité $\log = 1*\Lambda$, ce qui fournit après calculs
\begin{equation}\label{eq:eqdebase3}
    S_1 \geq \ssum{p_1,p_2<y\\p_1\leq x/y}\frac{\log p_1\log p_2}{\log(x/p_1)}\ssum{n~:~p_1p_2n\in\mathcal{A}\\P^+(n)\leq y}1.
\end{equation}
De même que précédemment, les indices $n$ de la somme interne sont $y$-friables dès que $p_1p_2> x/y$, ce qui nous amène à séparer la somme en deux:
\begin{equation}\label{eq:eqdebase4}
    S_1 \geq \ssum{p_1,p_2\leq y\\p_1p_2<x/y}\frac{\log p_1\log p_2}{\log(x/p_1)}\ssum{n~:~p_1p_2n\in\mathcal{A}\\P^+(n)\leq y}1 + \ssum{p_1,p_2\leq y\\p_1\leq x/y<p_1p_2<x}\frac{\log p_1\log p_2}{\log(x/p_1)}A_{p_1p_2} =: S_1' + S_2'.
\end{equation}
L'hypothèse~\eqref{cond3} et le premier théorème de Mertens permettent d'estimer la somme $S_2'$ . Ainsi, la minoration $S_1'\geq 0$ procure un terme principal meilleur que~\eqref{eq:minorefaible} et non trivial dans un domaine plus large, à savoir $1\leq u< 3$.

Dans $S_1'$, nous minorons le ``$1$'' par~$\log n/\log(x/p_1p_2)$ et poursuivons la récurrence en prenant soin à chaque étape d'isoler les grands nombres premiers comme nous l'avons fait ci-dessus en séparant $S_1$ et $S_2$, puis $S_1'$ et $S_2'$. Après $k$ itérations de la récurrence, nous obtenons une somme sur $p_1,\ldots,p_k$ dans l'intervalle~$\oo{x/y,x^\theta}$ (voir Proposition~\ref{prop:TT1T2R}). La borne ``$x^\theta$'' à droite est imposée par la condition~\eqref{cond3}. La borne ``$x/y$'' de gauche est là pour assurer que l'indice ``$n$'' dans~\eqref{eq:eqdebase} et~\eqref{eq:eqdebase3} est $y$-friable. Nous remarquons qu'il faut imposer la condition $x/y < x^\theta$ afin que la dernière somme ne soit pas vide, cette condition se réécrit $(1-\theta )u <1$ qui est bien l'hypothèse du théorème~\ref{theo:principal}. Cette somme est ensuite estimée grâce à une généralisation du premier théorème de Mertens qui fait l'objet du Lemme~\ref{lem:general}. Le terme principal dépend également de $\theta$ et lorsque celui-ci peut être choisi proche de $1$, le terme principal devient effectivement $\varrho(u)$, conformément à l'heuristique~\eqref{eq:heuristiquegenerale}.\\

Une autre manière d'exploiter la méthode ci-dessus consiste à appliquer la minoration de Hildebrand à tous les termes ``$n$'' dans la somme de droite de~\eqref{eq:eqdebase}, tandis que dans la méthode ci-dessus, nous l'avons appliquée uniquement à $S_1$. Cependant, cette manière de faire mène, en fin de compte, à un terme principal nul, sauf dans le cas $\theta=1$ (voir \cite{Mou23}) ou nous retrouvons~$\varrho(u)$.\\

Un autre amélioration de la méthode exposée ci-dessus pourrait être envisagée en repartant de~\eqref{eq:eqdebase} et en remplaçant la minoration $\log x \geq \sum_{d|n}\Lambda(d)$ par $(\log x)^k \geq \sum_{d|n}\Lambda_k(d)$ où~$\Lambda_k$ désigne la fonction de Von Mangoldt généralisée d'ordre $k$. Nous espérons ainsi récupérer de l'information car le support de $\Lambda_k$ croit avec $k$. Cependant, une première tentative avec $k=2$ et $\mathcal{A} = \{n\leq x;\,P^+(n+a)\leq z\}$ a révélé un terme principal moins bon que pour $k=1$. Cette piste n'a donc pas été explorée plus avant.

\section{Crible minorant pour les entiers friables}\label{sec:theoprincipal}

Dans cette partie, nous démontrons le Théorème~\ref{theo:principal}.
\subsection{Étude de $(\mu_k)_{k\geq 1}$}\label{sec:muk}
Dans cette section, nous explicitons certaines fonctions intervenant dans le calcul du coefficient $\varrho_\theta(u)$ du Théorème~\ref{theo:principal}.

\begin{defn}\label{def:Vketmuk}
    Pour $k\geq 2$ et $0\leq v\leq\min(1,u)$,
$$V_k(u,v) := \{\bm{t}\in\oo{0,1}^k:t_1 + \cdots + t_{k-1} < u-1    <t_1 + \cdots + t_k < u-v\}$$
et
\begin{equation}\label{eq:defmuk}
    \mu_k(u,v):= \frac{1}{u}\int_{V_k(u,v)}f_{k-1}(u;t_1,\ldots,t_{k-1})\dd \bm{t}
\end{equation}
où 
\begin{equation}\label{eq:fk}
    f_k(u;t_1,\ldots,t_k) = \prod\limits_{i=1}^k\frac{1}{u-t_1-\cdots-t_i}.
\end{equation}
De plus, nous posons par convention 
$$\mu_1(u,v) := \frac{1}{u}\mathrm{mes}(\oo{0,1}\cap\oo{u-1,u-v}).$$
\end{defn}

\begin{prop}\label{prop:recSk}
Soient $u$ et $v$ vérifiant $0\leq v< \min(1,u)$.\\
Pour tout $k\geq 2$, 
\begin{equation}\label{eq:recmuk}
    u\mu_k(u,v) =\begin{cases}
    0&\quad\text{si } 0<u\leq 1,\\
    \int_{\max(1,u-1)}^u \mu_{k-1}\left(s,v\right)\dd s&\quad\text{si }u>1
    \end{cases}       
\end{equation}
et  pour $k=1$,
\begin{equation}\label{eq:mu1}
    u\mu_1(u,v) = \begin{cases}
        u-v&\quad\text{si }0<u\leq 1,\\
        \min(1-v,2-u)&\quad\text{si }1<u\leq 2,\\
        0&\quad\text{si } u>2.
    \end{cases}
\end{equation}
\end{prop}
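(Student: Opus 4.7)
Le cas $k=1$ se règle par calcul direct de la mesure de l'intersection $\oo{0,1}\cap\oo{u-1,u-v}$ selon les trois positions relatives possibles $u\leq 1$, $1<u\leq 2$, $u>2$, en notant que $u-v>0$ grâce à l'hypothèse $v<\min(1,u)$.

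Pour $k\geq 2$, le cas $0<u\leq 1$ est immédiat: l'ensemble $V_k(u,v)$ est vide, puisque pour tout $(t_1,\ldots,t_k)\in\oo{0,1}^k$ la somme $t_1+\cdots+t_{k-1}$ est strictement positive, donc ne peut être strictement inférieure à $u-1\leq 0$. Il en résulte $u\mu_k(u,v)=0$.

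Le cœur de la démonstration concerne le cas $k\geq 2$, $u>1$, et repose sur le changement de variable $s=u-t_1$ dans l'intégrale définissant $u\mu_k(u,v)$. L'observation-clé est la factorisation
$$f_{k-1}(u;t_1,\ldots,t_{k-1}) = \frac{1}{u-t_1}\prod_{i=2}^{k-1}\frac{1}{s-t_2-\cdots-t_i} = \frac{1}{s}\cdot f_{k-2}(s;t_2,\ldots,t_{k-1}),$$
qui fait apparaître exactement l'intégrande de $\mu_{k-1}(s,v)$. Il reste alors à traduire les contraintes du domaine $V_k(u,v)$ en termes de $(s,t_2,\ldots,t_k)$: la condition $t_1+\cdots+t_{k-1}<u-1$ devient $t_2+\cdots+t_{k-1}<s-1$, et la double inégalité $u-1<t_1+\cdots+t_k<u-v$ se réécrit $s-1<t_2+\cdots+t_k<s-v$. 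Ce sont précisément les conditions définissant $V_{k-1}(s,v)$ pour le vecteur $(t_2,\ldots,t_k)$. Le théorème de Fubini fournira alors
$$u\mu_k(u,v) = \int \frac{1}{s}\int_{V_{k-1}(s,v)}f_{k-2}(s;t_2,\ldots,t_{k-1})\dd t_2\cdots\dd t_k\dd s = \int \mu_{k-1}(s,v)\dd s,$$
dont l'intervalle d'intégration en $s$ reste à identifier.

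Le principal obstacle sera précisément l'identification correcte de cette borne inférieure, à savoir $\max(1,u-1)$. La condition $t_1\in\oo{0,1}$ impose déjà $s\in\oo{u-1,u}$. Pour $k\geq 3$, la borne $s>1$ apparaît par vacuité: $V_{k-1}(s,v)$ est vide dès que $s\leq 1$ par l'argument utilisé plus haut, donc $\mu_{k-1}(s,v)$ s'annule sur $\oo{u-1,1}$, ce qui permet de remonter la borne de $u-1$ à $\max(1,u-1)$ sans modifier l'intégrale. Pour $k=2$ en revanche, cette vacuité ne s'applique plus à $\mu_1$; c'est alors la contrainte explicite $t_1<u-1$ (qui coïncide avec $t_1+\cdots+t_{k-1}<u-1$ lorsque $k=2$) qui force directement $s>1$ via le changement de variable. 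Une fois ce point traité avec soin selon la valeur de $k$, la proposition en découle.
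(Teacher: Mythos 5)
Votre démonstration est correcte et suit essentiellement la même voie que celle de l'article : calcul direct par disjonction de cas pour $k=1$, puis, pour $k\geq 2$, le changement de variable $s=u-t_1$ combiné à Fubini, la factorisation $f_{k-1}(u;t_1,\ldots,t_{k-1})=\tfrac{1}{s}f_{k-2}(s;t_2,\ldots,t_{k-1})$ et l'identification de la tranche à $t_1$ fixé avec $V_{k-1}(s,v)$. Votre discussion de la borne $\max(1,u-1)$ (par vacuité de $V_{k-1}(s,v)$ pour $s\leq 1$ si $k\geq 3$, par la contrainte $t_1<u-1$ si $k=2$) est juste et même un peu plus soignée que le texte original, qui écrit directement $t_1\in\oo{0,\min(1,u-1)}$.
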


\begin{proof}
Le point~\eqref{eq:mu1} est une simple disjonction de cas en partant de la définition de~$\mu_1(u,v)$.\\
Pour $k\geq2$, l'idée est d'effectuer le changement de variables qui envoie $t_1$ sur $u-s$ et qui laisse~$t_2,\ldots,t_k$ inchangées:
\begin{align*}
    u\mu_k(u,v) &= \int_0^{\min(1,u-1)}\frac{\dd t_1}{u-t_1}\int_{V_{k-1}(u-t_1,v)}\prod\limits_{i=2}^k\frac{\dd t_i}{u-t_1-\cdots-t_i}\\
    &= \int_{\max(1,u-1)}^u\frac{\dd s}{s}\int_{V_{k-1}(s,v)}\prod\limits_{i=2}^k\frac{\dd t_i}{s-t_2-\cdots-t_i}\\
    &= \int_{\max(1,u-1)}^u\mu_{k-1}(s,v)\dd s.
\end{align*}
\end{proof}

\begin{prop}
    Pour $0\leq v<\min(1,u)$ et $\kappa>0$, la série $\sum_{k\geq 1}\kappa^k\mu_k(u,v)$ est convergente. Plus précisément, 
    $$\sum_{k\geq 1}\kappa^k\mu_k(u,v) \leq\begin{cases} 
    \kappa\quad\text{si }0<u\leq 1\\
    (1+u)^\kappa\quad\text{si }u>1.
    \end{cases}$$
\end{prop}

\begin{proof}
    Pour $k\geq 1$, notons $F_k(u,v) := \sum_{i=1}^k\kappa^i\mu_i(u,v)$. Comme
    chaque $\mu_i(u,v)$ est positif, la suite $(F_k(u,v))_{k\geq 1}$ est croissante. Montrons qu'elle est majorée. D'après~\eqref{eq:recmuk} et~\eqref{eq:mu1}, pour $k\geq 2$,
    \begin{equation}\label{eq:recFk}
        uF_k(u,v) = \kappa u\mu_1(u,v) + \kappa\int_{\max(1,u-1)}^uF_{k-1}(s,v)\dd s\quad\quad\quad (1\leq u)
    \end{equation}
    et 
    \begin{equation}\label{eq:Fkupetit}
        uF_k(u,v) = \kappa(u-v)\quad\quad\quad (0<u\leq 1).
    \end{equation}
    De~\eqref{eq:Fkupetit}, nous déduisons que $F_k(u,v) \leq \kappa $ pour $u\in\of{0,1}$. Supposons maintenant que $u>1$ et montrons que $F_k(u,v) \leq (1+u)^\kappa$ pour tout $k\geq 1$. 
    \begin{itemize}
        \item Pour $1<u\leq 2$, l'équation~\eqref{eq:mu1} implique~$u\mu_1(u,v) \leq 1$, donc l'équation~\eqref{eq:recFk} entraîne
    $$uF_k(u,v) \leq \kappa + \kappa\int_1^uF_{k-1}(s,v)\dd s$$
    et donc une récurrence directe fournit $F_k(u,v) \leq (1+u)^\kappa$ pour $k\geq 1$.
    \item  Pour $u>2$ une récurrence permet aussi de conclure car l'équation~\eqref{eq:recFk} devient
    $$uF_k(u,v) = \kappa\int_{u-1}^uF_{k-1}(s,v)\dd s.$$
    \end{itemize}
     
    Ainsi, la suite $(F_k(u,v))_{k\geq 1}$ converge. 
\end{proof}

Notons pour $u>0$ et $0\leq v<\min(1,u)$:
\begin{equation}\label{eq:mudef}
    \mu^{(\kappa)}(u,v) := \sum_{k=1}^\infty\kappa^k\mu_k(u,v)
\end{equation}

\begin{prop}\label{prop:mu}
    Pour $u>0$ et $0\leq v<\min(1,u)$,  
    
    \begin{itemize}
        \item Si $\kappa\neq 1$:
         \begin{equation}\label{eq:muuvkappa}
        \mu^{(\kappa)}(u,v) = \begin{cases}
            \kappa(1 - \frac{v}{u})&\quad\quad\quad\text{si }0<u\leq 1,\\
            \kappa(1 - v)u^{\kappa -1} &\quad\quad\quad\text{si }1<u\leq 1+v,\\
           \frac{v-v\kappa-1}{\kappa-1}\left(\frac{u}{1+v}\right)^{\kappa-1}+\frac{\kappa}{\kappa-1} &\quad\quad\quad\text{si }1+v<u\leq 2
        \end{cases}
    \end{equation}
    et
    \begin{equation}\label{eq:muuv2kappa}
        u\frac{\partial\mu^{(\kappa)}}{\partial u}(u,v)+(1-\kappa)\mu^{(\kappa)}(u,v)+\kappa\mu^{(\kappa)}(u-1,v) = 0\quad\text{si }2<u.
    \end{equation}
    \item Si $\kappa= 1$, $\mu^{(1)}(u,v)$ s'obtient en remplaçant $\kappa$ par $1$ dans les expressions de \eqref{eq:muuvkappa} et \eqref{eq:muuv2kappa} sauf dans le cas $1+v<u\leq 2$ où il faut prendre la limite lorsque $\kappa\to 1$, soit~$\mu^{(1)}(u,v) =  1 - v + \log\left(\frac{1+v}{u}\right)$. En particulier, $\mu^{(1)}(\cdot,0)$ est la fonction $\varrho$ de Dickman et plus généralement 
    \begin{equation}\label{eq:distrhomu}
        0\leq \varrho(u) - \mu^{(1)}(u,v) \leq \frac{v}{\Gamma(u+1)}\quad\quad\quad (u\geq 1).
    \end{equation}
    \end{itemize}

\end{prop}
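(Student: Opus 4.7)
L'idée est de condenser la récurrence de la Proposition~\ref{prop:recSk} en une équation intégrale maîtresse pour $\mu^{(\kappa)}$. En multipliant~\eqref{eq:recmuk} par $\kappa^k$ et en sommant sur $k\geq 2$, puis en ajoutant $\kappa u\mu_1(u,v)$, on obtient pour $u\geq 1$
\begin{equation*}
u\mu^{(\kappa)}(u,v) = \kappa u\mu_1(u,v) + \kappa\int_{\max(1,u-1)}^u \mu^{(\kappa)}(s,v)\,\dd s.
\end{equation*}
Le cas $0<u\leq 1$ est immédiat: comme $\mu_k(u,v)=0$ pour $k\geq 2$ d'après la Proposition~\ref{prop:recSk}, on a directement $\mu^{(\kappa)}(u,v) = \kappa\mu_1(u,v) = \kappa(1-v/u)$.

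Pour $1<u\leq 2$ et $\kappa\neq 1$, on a $\max(1,u-1)=1$, et la dérivation de l'équation intégrale fournit l'EDO linéaire du premier ordre
\[u(\mu^{(\kappa)})'(u,v) + (1-\kappa)\mu^{(\kappa)}(u,v) = \kappa\tfrac{d}{du}\bigl(u\mu_1(u,v)\bigr).\]
D'après~\eqref{eq:mu1}, le second membre est nul sur $(1,1+v)$ (solution homogène $\mu^{(\kappa)}(u,v) = Cu^{\kappa-1}$ avec $C=\kappa(1-v)$ par continuité en $u=1$), et vaut $-\kappa$ sur $(1+v,2)$ (solution de la forme $\tfrac{\kappa}{\kappa-1}+Cu^{\kappa-1}$, la constante étant fixée par continuité en $u=1+v$). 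Pour $u>2$, on a $\mu_1(u,v)=0$ et $\max(1,u-1)=u-1$; la même dérivation conduit alors directement à~\eqref{eq:muuv2kappa}.

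Le cas $\kappa=1$ s'obtient par prolongement par continuité, à l'exception de la formule sur $(1+v,2)$ qui présente une indétermination $0/0$; on la résout en intégrant $u\mu'=-1$ avec la condition $\mu(1+v,v)=1-v$, ce qui donne $\mu^{(1)}(u,v)=1-v+\log((1+v)/u)$. En spécialisant à $v=0$, on retrouve la fonction $\varrho$ de Dickman sur $(0,2]$; l'équation~\eqref{eq:muuv2kappa} à $\kappa=1$ étant précisément l'équation différentielle à retard de Dickman, l'unicité des solutions étend l'identité à $\R_+^*$ tout entier.

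Pour l'encadrement~\eqref{eq:distrhomu}, posons $\Delta(u):=\varrho(u)-\mu^{(1)}(u,v)$. La minoration $\Delta\geq 0$ résulte de la décroissance de $\mu^{(1)}$ en $v$, elle-même conséquence de la décroissance en $v$ de $u\mu_1(u,v)=\min(1-v,2-u)$ et de la positivité structurelle de la récurrence. Pour la majoration, on vérifie directement l'inégalité $\Delta(u)\Gamma(u+1)\leq v$ sur $[1,2]$ par disjonction selon que $u\in[1,1+v]$ (où $\Delta=v-\log u$) ou $u\in[1+v,2]$ (où $\Delta=v-\log(1+v)$), le calcul reposant sur les inégalités élémentaires $\log(1+v)\geq v/2$ et $1+v\geq 2^v$ valables pour $0\leq v\leq 1$. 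Pour $u>2$, $\Delta$ vérifie l'équation à retard de Dickman $u\Delta'(u)+\Delta(u-1)=0$; la majoration se propage en comparant $\Delta$ avec $v\varrho$ (également solution de cette équation par linéarité) : l'inégalité $\Delta\leq v\varrho$ s'établit sur $[1,2]$ via les mêmes outils, puis se transmet à $[1,\infty)$ via la positivité des solutions à données initiales positives de l'équation de Dickman. On conclut grâce à la majoration classique $\varrho(u)\leq 1/\Gamma(u+1)$, qui fournit alors $\Delta(u)\leq v\varrho(u)\leq v/\Gamma(u+1)$. L'obstacle principal est précisément cette propagation de positivité au-delà de $[1,2]$, qui demande un argument inductif sur les intervalles $[n,n+1]$ exploitant la décroissance imposée par $F'(u)=-F(u-1)/u\leq 0$ là où $F:=v\varrho-\Delta$ reste positive, et un contrôle fin aux bornes entières.
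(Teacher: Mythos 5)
La partie calculatoire de votre preuve (équation intégrale maîtresse obtenue en sommant la récurrence de la Proposition~\ref{prop:recSk}, dérivation, résolution des EDO linéaires avec conditions de continuité, passage à la limite $\kappa\to1$ sur $\of{1+v,2}$) suit essentiellement la même démarche que l'article et est correcte. La divergence, et la lacune, se situent dans votre traitement de la majoration~\eqref{eq:distrhomu} pour $u>2$. Vous posez $F:=v\varrho-\Delta$ et voulez propager $F\geq 0$ de $[1,2]$ aux intervalles suivants en invoquant la décroissance de $F$ là où $F(u-1)\geq 0$. Cet argument ne suffit pas~: une fonction décroissante partant d'une valeur positive en $u=n+1$ peut parfaitement devenir négative avant $u=n+2$, et le ``contrôle fin aux bornes entières'' que vous signalez vous-même comme l'obstacle principal est précisément l'étape manquante, non un détail. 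L'ingrédient qui rend la propagation immédiate est la forme intégrale $uF(u)=\int_{u-1}^uF(s)\dd s$, valable pour $u\geq 2$ puisque $\varrho$ et $\mu^{(1)}(\cdot,v)$ vérifient toutes deux $uG(u)=\int_{u-1}^uG(s)\dd s$ sur cet intervalle (c'est la dernière ligne du système intégral obtenu en début de preuve) et que $F$ en est une combinaison linéaire~; elle donne aussitôt $F(u)\geq 0$ dès que $F\geq 0$ sur $[u-1,u]$. Sans elle, votre induction sur $[n,n+1]$ ne se referme pas.

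À titre de comparaison, l'article procède plus directement~: il écrit $\varrho(u)-\mu^{(1)}(u,v)=\frac1u\int_{u-1}^u(\varrho(s)-\mu^{(1)}(s,v))\dd s$, en déduit par dérivation (et positivité de $\Delta$) que $\Delta$ est décroissante, majore l'intégrale par $\Delta(u-1)$ et conclut par récurrence sur $\lfloor u\rfloor$ via $\Delta(u)\leq\Delta(u-1)/u\leq v/(u\Gamma(u))=v/\Gamma(u+1)$. Cette voie évite à la fois la comparaison avec $v\varrho$ et le recours à la majoration $\varrho(u)\leq 1/\Gamma(u+1)$, que vous devez par ailleurs admettre. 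Votre vérification sur $[1,2]$ est en substance correcte et coïncide avec l'initialisation de la récurrence de l'article~; pour $u>2$, il vous faut soit injecter la forme intégrale ci-dessus dans votre argument de positivité, soit adopter directement la récurrence de l'article.
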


\begin{proof}
    En faisant tendre $k$ vers $+\infty$ dans~\eqref{eq:recFk} et par le théorème de convergence dominée, il vient
    $$u\mu^{(\kappa)}(u,v) = \begin{cases}
        \kappa(u-v),&\quad\text{si } 0<u\leq 1,\\
        \kappa(1-v) + \kappa\int_{1}^u\mu^{(\kappa)}(s,v)\dd s&\quad\text{si } 1<u\leq 1+v,\\
        \kappa(2-u) + \kappa\int_{1}^u\mu^{(\kappa)}(s,v)\dd s&\quad\text{si } 1+v<u\leq 2,\\
        \kappa\int_{u-1}^u\mu^{(\kappa)}(s,v)\dd s&\quad\text{si } 2<u. 
    \end{cases}$$
    En particulier, nous remarquons que $\mu^{(\kappa)}(\cdot,v)$ est continue sur $\oo{0,+\infty}$ et dérivable sur chacun des intervalles ci-dessus, sauf éventuellement sur leurs bords, en lesquels une dérivée à gauche et à droite existent mais ne sont pas forcément égales. En dérivant sur chacun des $3$ derniers intervalles par rapport à $u$, nous obtenons
    $$u\frac{\partial\mu^{(\kappa)}}{\partial u}(u,v)+\mu^{(\kappa)}(u,v) = \begin{cases}
        \kappa\mu^{(\kappa)}(u,v)&\quad\text{si } 1<u\leq 1+v,\\
        -\kappa + \kappa\mu^{(\kappa)}(u,v)&\quad\text{si } 1+v<u\leq 2,\\
        \kappa\mu^{(\kappa)}(u,v)-\kappa\mu^{(\kappa)}(u-1,v)&\quad\text{si } 2<u. \\
    \end{cases}$$
    
    Enfin, \eqref{eq:muuvkappa} et \eqref{eq:muuv2kappa} se retrouvent en résolvant chaque équation différentielle, les conditions aux bords étant données par la continuité de $\mu^{(\kappa)}(\cdot,v)$.\\
    
    Montrons le point~\eqref{eq:distrhomu}. La minoration découle de la décroissance de $\mu^{(1)}(u,\cdot)$ et du fait que~ $\mu^{(1)}(\cdot,0)=\varrho$. Pour la majoration, nous procédons par récurrence sur $k:=\lfloor u\rfloor$:
    \begin{itemize}
        \item Si $k=1$, il faut distinguer les cas $1\leq u< 1+v$ et $1+v\leq u<2$. Dans le premier cas,~$\varrho(u) - \mu^{(1)}(u,v) = v-\log u < \frac{v}{u}$ car $v< 1$ et dans le deuxième cas, $\varrho(u) - \mu^{(1)}(u,v) = v-\log (1+v) \leq \frac{v^2}{2} < \frac{v}{u}$ car $v \leq u-1< \frac{2}{u}$. Le résultat découle ensuite du fait que $\Gamma(u+1) \leq u$ pour $1\leq u<2$.\\
        \item Soit $k\geq 2$ tel que~\eqref{eq:distrhomu} est montrée au rang $k-1$. Soit $u$ tel que~$k:=\lfloor u\rfloor$. Comme $k\geq 2$, nous avons vu plus haut que 
    $$u\mu^{(1)}(u,v) = \int_{u-1}^u\mu^{(1)}(s,v)\dd s$$
    et donc 
    \begin{equation}\label{eq:rhomoinsmu}
        \varrho(u)-\mu^{(1)}(u,v) = \frac{1}{u}\int_{u-1}^u(\varrho(s)-\mu^{(1)}(s,v))\dd s.
    \end{equation}
    Cette relation nous permet de montrer que $\varrho - \mu^{(1)}(\cdot,v)$ est décroissante. En effet, en dérivant par rapport à $u$, nous obtenons 
    $$\frac{\mathrm{d}}{\mathrm{d}u}(\varrho-\mu^{(1)}(.,v))(u) = -\frac{\varrho(u-1)-\mu^{(1)}(u-1,v)}{u} $$
    et $\varrho(u-1) \geq \mu^{(1)}(u-1,v)$ d'après la minoration dans \eqref{eq:distrhomu} montrée plus haut. Finalement, en majorant l'intégrale dans \eqref{eq:rhomoinsmu} par la valeur maximale de l'intégrande, nous obtenons
    $$\varrho(u)-\mu^{(1)}(u,v) \leq \frac{\varrho(u-1)-\mu^{(1)}(u-1,v)}{u}\leq \frac{v}{u\Gamma(u)} = \frac{v}{\Gamma(u+1)},$$
    la deuxième inégalité étant donnée par l'hypothèse de récurrence.
    \end{itemize}
    \end{proof}

    L'équation différentielle aux différences \eqref{eq:muuv2kappa} ressemble à celle vérifiée par la fonction $\varrho$ de Dickman, ce qui suggère qu'une expression explicite de ces fonctions n'existe pas dans le cas général. Cependant, cela s'avère possible pour certaines valeurs de $u$ et $v$ qui sont utiles dans les démonstrations des théorèmes énoncés dans l'introduction.\\
    
    Pour $\theta\in \of{1-1/u,1}$, notons
        \begin{equation}\label{eq:rhotheta}
            \varrho_{\theta,\kappa}(u) := \mu^{(\kappa)}(u,(1-\theta)u).
        \end{equation}
    Le coefficient $\varrho_{\theta,\kappa}(u)$ intervient dans le Théorème~\ref{theo:principal}. Le corollaire suivant permet de l'expliciter dans certains domaines en $u$ et $\theta$ qui se rencontrent dans les applications.
    \begin{cor}\label{cor:rhotheta}
    Soient $\theta,\varepsilon\in\of{0,1}$.
        \begin{itemize}
            \item Si $0<\theta \leq \frac{1}{2}$,
        \begin{equation}\label{eq:theta12}
            \varrho_{\theta,\kappa}(u) = 
                \kappa(1-(1-\theta)u)u^{\kappa-1}.\quad\quad\quad\left(1<u\leq \tfrac{1}{1-\theta}\right)
        \end{equation}
        \item Si $\frac{1}{2}<\theta \leq \frac{2}{3}$,
        \begin{equation}\label{eq:theta23}
            \varrho_{\theta,1}(u) = \begin{cases}
                1-(1-\theta)u,&\quad\text{si } 1<u\leq \frac{1}{\theta}\\
                1-(1-\theta)u+\log\left(\frac{1}{u}+1-\theta\right),&\quad\text{si } \frac{1}{\theta}<u\leq 2\\
                1-(1-\theta)u+\log\left(\frac{1}{u}+1-\theta\right) + (1-\theta)u\log\frac{u}{2}.&\quad\text{si } 2<u\leq \frac{1}{1-\theta}
            \end{cases}
        \end{equation}
        \item Si $\theta = 1-\varepsilon$,
        \begin{equation}\label{eq:theta1}
            \varrho_{\theta,1}(u) = \varrho(u) + O(\varepsilon)\quad\text{si } 0<u
        \end{equation}
        \end{itemize}
    \end{cor}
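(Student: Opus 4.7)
My plan is to prove each of the three assertions directly from Proposition~\ref{prop:mu} by substituting $v=(1-\theta)u$ into its piecewise formulas for $\mu^{(\kappa)}(u,v)$. The subtlety throughout is that Proposition~\ref{prop:mu}, and in particular the differential-difference equation \eqref{eq:muuv2kappa}, treats $v$ as a parameter held fixed. Hence to evaluate $\varrho_{\theta,\kappa}(u)=\mu^{(\kappa)}(u,(1-\theta)u)$, I would first freeze $v_0:=(1-\theta)u$, apply the appropriate formula from Proposition~\ref{prop:mu} to $\mu^{(\kappa)}(\cdot,v_0)$, and only then re-express $v_0$ in terms of $u$.

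For $0<\theta\leq\tfrac{1}{2}$ one has $1<u\leq\tfrac{1}{1-\theta}\leq 2$ and $u\leq\tfrac{1}{\theta}$, hence $u\leq 1+v$. The second line of \eqref{eq:muuvkappa} then yields \eqref{eq:theta12} immediately, uniformly in $\kappa$.

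For $\tfrac{1}{2}<\theta\leq\tfrac{2}{3}$ and $\kappa=1$, the three subranges for $u$ correspond to $1<u\leq 1+v$, $1+v<u\leq 2$, and $2<u$ respectively, matching the second, third, and fourth lines of \eqref{eq:muuvkappa}--\eqref{eq:muuv2kappa}. The first two cases are direct substitutions (using the stated limit as $\kappa\to 1$ in the third line). For $2<u\leq\tfrac{1}{1-\theta}$ the main work is to integrate \eqref{eq:muuv2kappa} at $\kappa=1$ from $2$ to $u$ with $v_0$ fixed: the point to check is that $s-1\in(1,1+v_0]$ for every $s\in[2,u]$, which reduces to $u\leq\tfrac{2}{\theta}$, an inequality that holds since $\tfrac{1}{1-\theta}\leq\tfrac{2}{\theta}$ for $\theta\leq\tfrac{2}{3}$. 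Under this condition $\mu^{(1)}(s-1,v_0)=1-v_0$ on the whole range, and integration gives
\[
\mu^{(1)}(u,v_0)\;=\;\mu^{(1)}(2,v_0)\;-\;(1-v_0)\log(u/2).
\]
The boundary value $\mu^{(1)}(2,v_0)$ is read off from the third line of \eqref{eq:muuvkappa} at $\kappa=1$, namely $\mu^{(1)}(2,v_0)=1-v_0+\log((1+v_0)/2)$; substituting $v_0=(1-\theta)u$ and grouping the logarithms then recovers \eqref{eq:theta23}.

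The third case $\theta=1-\varepsilon$ follows from the distance estimate \eqref{eq:distrhomu}: for $u\geq 1$,
\[
|\varrho(u)-\varrho_{\theta,1}(u)|\;=\;|\varrho(u)-\mu^{(1)}(u,\varepsilon u)|\;\leq\;\frac{\varepsilon u}{\Gamma(u+1)}\;=\;O(\varepsilon),
\]
uniformly in $u$, because $u/\Gamma(u+1)$ is bounded on $[1,\infty)$; for $0<u\leq 1$ one has $\varrho(u)=1$ and the first line of \eqref{eq:muuvkappa} gives $\mu^{(1)}(u,\varepsilon u)=1-\varepsilon$, so the difference is again $O(\varepsilon)$. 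The main technical obstacle is thus the integration step in the subcase $2<u\leq\tfrac{1}{1-\theta}$, where the range condition $s-1\leq 1+v_0$ must be verified uniformly in $s\in[2,u]$ before the explicit form of $\mu^{(1)}(\cdot,v_0)$ can be used; the remaining steps are substitutions and elementary simplifications.
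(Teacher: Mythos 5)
Your proof is correct and follows exactly the route of the paper, whose own proof consists of the single sentence that \eqref{eq:theta12} and \eqref{eq:theta23} follow from \eqref{eq:muuvkappa} (together with \eqref{eq:muuv2kappa} for the subcase $2<u$) and \eqref{eq:theta1} from \eqref{eq:distrhomu}. You merely supply the verifications the paper leaves implicit — in particular the range checks $u\leq 1+v \Leftrightarrow u\leq 1/\theta$ and $s-1\leq 1+v_0$ for $s\in[2,u]$ via $\tfrac{1}{1-\theta}\leq\tfrac{2}{\theta}$ — and these are all accurate.
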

    \begin{proof}
        Les points~\eqref{eq:theta12} et~\eqref{eq:theta23} découlent directement de~\eqref{eq:muuvkappa} et le point~\eqref{eq:theta1} découle de~\eqref{eq:distrhomu}.
    \end{proof}

\subsection{Lemmes préliminaires}
Dans cette section, nous énonçons quelques lemmes indispensables à la démonstration du Théorème~\ref{theo:principal}.\\

Notons 
$$\Psi_{\mathcal{A}}(y,d) := \#\{n\in\mathcal{A}:P^+(n)\leq y,\,d|n\}.$$

Le lemme suivant est plutôt une remarque mais il nous a paru utile de le mettre en valeur car c'est l'ingrédient clé qui fait fonctionner notre méthode. En effet, il permet de supprimer la dépendance en $y$ du terme $\Psi_{\mathcal{A}}(y,d)$ pour des choix de $d$ suffisamment grand. C'est cette astuce qui permet de dégager la formule de récurrence qui fait l'objet de la Proposition~\ref{prop:TT1T2R} qui est centrale dans la démonstration du Théorème~\ref{theo:principal}.

\begin{lem}\label{lem:PsiAd}
    Si $d$ est un entier naturel vérifiant $y^{u-1}\leq d$ et $P^+(d)\leq y$, alors 
    $$\Psi_{\mathcal{A}}(y,d) = A_d.$$
\end{lem}

\begin{proof}
Il suffit d'écrire
    \begin{align*}
        \Psi_{\mathcal{A}}(y,d) &= \{n\in\mathcal{A}:P^+(n)\leq y,\, d|n\}\\
        &= \left\{ kd\in\mathcal{A}:P^+(kd)\leq y \right\}\\
        &= \left\{ kd\in\mathcal{A}:P^+(k)\leq y \right\}\quad\text{(car }P^+(d)\leq y).
    \end{align*}
    Or, la condition $y^{u-1} \leq d$ implique $\frac{x}{d} \leq y$, donc les $k$ apparaissant dans l'ensemble ci-dessus sont inférieurs à $y$ et \textit{a fortiori} $y$-friables, d'où le résultat.
\end{proof}
Pour tous nombres premiers $p_1,\ldots,p_k$ et tout réel $x> p_1\cdots p_k$, notons 
    $$Q(x;p_1,\ldots,p_k) := \frac{\log p_1\cdots\log p_k}{\log\left(\frac{x}{p_1}\right)\cdots\log\left(\frac{x}{p_1\dots p_k}\right)}.$$

Cette quantité apparaît naturellement dans la récurrence mise en place dans la Proposition~\ref{prop:TT1T2R}. Le lemme suivant est utile dans la démonstration du Lemme~\ref{lem:minQE}.

\begin{lem}\label{lem:minQ}
Soient $p_1,\ldots,p_k$ des nombres premiers.
Si $\theta\in\oo{0,1}$ est tel que $p_1\cdots p_k \leq x^{\theta}$, alors
    $$Q(x;p_1,\ldots,p_k) \ll_{\theta,k}1.$$
\end{lem}

\begin{proof}
    Pour tout $1\leq i\leq k$, $\log\left(\frac{x}{p_1\cdots p_i}\right)\geq (1-\theta)\log x$, d'où
    $$Q(x;p_1,\ldots,p_k) \leq \frac{1}{(1-\theta)^k(\log x)^k}(\log p_1)\cdots (\log p_k).$$
    Le résultat découle ensuite du fait que $\log p_i\leq \theta \log x$ pour chaque $i$.
\end{proof}

Le lemme suivant est utile dans la minoration de l'erreur qui apparaît dans la démonstration du Théorème~\ref{theo:principal} (voir section~\ref{sec:demotheoprincipal}).

\begin{lem}\label{lem:minQE}
    Soit $\theta\in\oo{1-\frac{1}{u},1}$.\\
    Sous la condition~\eqref{cond3}, il existe $\delta>0$ tel que pour tout $k\geq 1$,
    \begin{equation}\label{eq:sumEQ1}
        \sum_{\substack{p_1,\ldots,p_{k}\leq y\\ p_1\cdots p_{k}\leq y^{u\theta}}}|r_{p_1\cdots p_{k}}|Q(y^u;p_1,\ldots,p_{k-1}) \log p_{k}\ll_{k,\theta}X(\log y)^{1-\delta}.
    \end{equation}
    Sous la condition~\eqref{cond4}, il existe $\delta>0$ tel que pour tous $a\in\Z^*$ et $k\geq 1$,
    \begin{equation}\label{eq:sumEQ2}
        \sum_{\substack{p_1,\ldots,p_{k}\leq y\\ p_1\cdots p_{k}\leq y^{u\theta}\\p_1\cdots p_{k-1} \leq y^{u-1} \\(p_1\cdots p_k,a)=1}}r_{p_1\cdots p_{k}}Q(y^u;p_1,\ldots,p_{k-1}) \log p_{k}\ll_{a,k,\theta}X(\log y)^{1-\delta}.
    \end{equation}
\end{lem}

\begin{proof}
Notons $S$ le terme de gauche dans~\eqref{eq:sumEQ1}.
    \begin{align*}
        S\leq & \log y\sum_{\substack{p_1,\ldots,p_{k}\leq y\\ p_1\cdots p_{k}\leq y^{u\theta}}}|r_{p_1\cdots p_{k}}|Q(y^u;p_1,\ldots,p_{k-1}).
        \end{align*}
        Le Lemme~\ref{lem:minQ} permet ensuite de majorer $Q$:
        \begin{align*}
        S &\ll_{\theta,k} \log y\sum_{\substack{p_1,\ldots,p_{k}\leq y\\ p_1\cdots p_{k}\leq y^{u\theta}}}|r_{p_1\cdots p_{k}}|\\
        &\ll_{\theta,k} \log y\sum_{\substack{d\leq y^{u\theta}\\ \Omega(d) = k}}|r_{d}|\\
        &\ll_{\theta,k} \log y\sum_{d\leq y^{u\theta}}|r_{d}|\\
        &\ll_{\theta,k}X(\log y)^{1-\delta}\quad\text{d'après la condition }\eqref{cond3}.
    \end{align*}
   
   Notons $S'$ la somme du membre de gauche dans~\eqref{eq:sumEQ2}. 
   \begin{align*}
       S' &= \log y\sum_{\substack{p_1,\ldots,p_{k}\leq y\\ p_1\cdots p_{k}\leq y^{u\theta}\\p_1\cdots p_{k-1} \leq y^{u-1} \\(p_1\cdots p_k,a)=1}}r_{p_1\cdots p_{k}}F(\tfrac{\log p_1}{\log y},\ldots,\tfrac{\log p_{k-1}}{\log y})
   \end{align*}
   où $F(t_1,\ldots,t_k)=t_1\cdots t_kf_{k-1}(u;t_1,\ldots,t_{k-1})$
   et les fonctions $f_k$ sont définies en~\eqref{eq:fk}. Maintenant, pour $p_1,\ldots,p_k$ dans le domaine défini par les conditions de la somme ci-dessus, notons $t_i := \tfrac{\log p_i}{\log y}$. Nous pouvons écrire
   \begin{align*}
       F(t_1,\ldots,t_k) = \int_0^{t_1}\cdots\int_0^{t_k}\frac{\partial^k}{\partial s_1\cdots \partial s_k}F(s_1,\ldots,s_k)\dd \mathbf{s}.
   \end{align*}
   Donc
   \begin{align*}
       S' = \log y \underset{\substack{0\leq s_i\\\sum_i s_i \leq u\theta}}{\idotsint}\frac{\partial^k}{\partial s_1\cdots \partial s_k}F(s_1,\ldots,s_k)\sum_{\substack{\forall i,\,y^{s_i}\leq p_i\leq y\\ p_1\cdots p_{k}\leq y^{u\theta}\\p_1\cdots p_{k-1} \leq y^{u-1} \\(p_1\cdots p_k,a)=1}}r_{p_1\cdots p_{k}}\dd \mathbf{s}.
   \end{align*}
   D'après la condition~\eqref{cond4}, comme les conditions sur les $\log(p_i)/\log(y)$ dans la somme interne décrivent un convexe de $\R_+^n$, cette dernière est majorée par un $O_{k,\theta,a}(\frac{x}{(\log y)^\delta})$ uniformément en les~$s_i$. D'autre part, la fonction $F$ est $\mathcal{C}^\infty$ sur un ouvert qui contient l'ensemble $\mathcal{S}( u\theta)$, donc~$S' = O_{k,\theta,a}(x(\log y)^{1-\delta})$.

\end{proof}

Le lemme suivant est une généralisation à plusieurs dimensions du premier théorème de Mertens (voir par exemple \cite{Ten15}, Théorème. 1.8). Il permet de relier les sommes obtenues dans la Proposition~\ref{prop:TT1T2R} aux fonctions $\mu_k$ étudiées dans la partie~\ref{sec:muk}.

\begin{lem}\label{lem:general}
    Soient~$k\in\N$ et $C = C(k) > 0$. Soient~$U\subset [0, 1]^k$ un ensemble convexe, et~$f:U\to\C$ une fonction de classe~$\mathcal{C}^1$ vérifiant la condition 
    \begin{equation}\label{eq:condlemgen}
     \|f\|_\infty + \sum_{j=1}^k \Big\| \frac{\partial f}{\partial t_j}\Big\|_\infty \leq C. \tag{$\star$}
    \end{equation}
  Alors sous la condition~\eqref{cond1},
  $$ \ssum{p_1, \dotsc, p_k\in \mathcal{P} \\ \left(\frac{\log p_j}{\log y}\right)_{1\leq j\leq k}\in U} f\left(\frac{\log p_1}{\log y},\ldots,\frac{\log p_k}{\log y}\right)\prod\limits_j\frac{\gamma(p_j)\log p_j}{p_j\log y}  = \kappa^k\int_U f(\bm{t})\dd\bm{t} + O_k\left(\frac{C}{\log y}\right). $$
\end{lem}

\begin{proof}
  Nous procédons par récurrence. Pour $k=1$, l'ensemble $U$ est un sous-intervalle de~$[0, 1]$, notons $a$ et $b$ ses extrémités ($a<b$). Sous les conditions~\eqref{cond1} et~\eqref{eq:condlemgen}, une transformation d'Abel suffit:
  \begin{align*}
    \ssum{p\in \mathcal{P} \\ \tfrac{\log p}{\log y}\in U} \tfrac{\gamma(p)\log p}{p\log y} &f\left(\tfrac{\log p}{\log y}\right) = \ssum{y^a< p < y^b} \tfrac{\gamma(p)\log p}{p\log y} f\left(\tfrac{\log p}{\log y}\right)\\
    &= f(b) \ssum{y^a< p < y^b} \tfrac{\gamma(p)\log p}{p\log y} - \int_{y^a}^{y^b} \left(\ssum{y^a< p < v} \tfrac{\gamma(p)\log p}{p\log y}\right)f'\left(\tfrac{\log v}{\log y}\right)  \tfrac{\dd v}{v\log y} \\
    &= \kappa f(b)\left(b-a+O\left(\tfrac{1}{\log y}\right)\right) - \kappa\int_a^b \left(t-a+O\left(\tfrac{1}{\log y}\right)\right)f'(t)\dd t\\
    &= \kappa\int_a^bf(t)\dd t + O\left(\tfrac{C}{\log y}\right).
  \end{align*}
  Soit~$k\geq 2$ tel que le lemme est vérifié en $q$ variables pour tout $1\leq q\leq k-1$. Pour chaque~$p_1, \dots, p_{k-1}$, notons
  \begin{align*}
      I(p_1, \dotsc, p_{k-1}) &:= \left\{t\in\R_+: \left(\tfrac{\log p_1}{\log y},\ldots,\tfrac{\log p_{k-1}}{\log y},t\right)\in U\right\},\\
      f_{p_1, \dotsc, p_{k-1}}(t) &:= f\left(\tfrac{\log p_1}{\log y}, \dotsc, \tfrac{\log p_{k-1}}{\log y}, t\right).
  \end{align*}
  La fonction $f_{p_1, \dotsc, p_{k-1}}$ est de classe $\mathcal{C}^1$ et
  $$ \|f_{p_1, \dotsc, p_{k-1}}\|_\infty + \|f'_{p_1, \dotsc, p_{k-1}}\|_\infty \leq C. $$
  Puisque~$U$ est convexe, l'ensemble~$I(p_1, \dotsc, p_{k-1})$ décrit un intervalle, qui est inclus dans~$[0, 1]$. Nous pouvons donc appliquer l'hypothèse de récurrence au rang $1$ à la fonction $f_{p_1,\ldots,p_{k-1}}$: 
  $$ \ssum{p_k\in\mathcal{P} \\ \tfrac{\log p_k}{\log y}\in I(p_1,\ldots,p_{k-1})} \tfrac{\gamma(p_k)\log p_k}{p_k \log y} f_{p_1, \dotsc, p_{k-1}}\left(\tfrac{\log p_k}{\log y}\right) = \kappa\int_{I(p_1, \dotsc, p_{k-1})} f_{p_1, \dotsc, p_{k-1}}(\bm{t})\dd\bm{t} + O\left(\tfrac{C}{\log y}\right) $$
  uniformément vis-à-vis de~$p_1, \dotsc, p_{k-1}$. Soit $\Phi$ l'application définie sur $\R^k$ et à valeurs dans $\R^{k-1}$ définie par $\Phi(t_1,\ldots,t_k) := (t_1,\ldots,t_{k-1})$. 
  \begin{align*}
    S:=&  \ssum{p_1, \dotsc, p_k\in\mathcal{P} \\ \left(\tfrac{\log p_j}{\log y}\right)_{1\leq j\leq k}\in U} f\left(\tfrac{\log p_1}{\log y},\ldots,\tfrac{\log p_k}{\log y}\right)\prod\limits_j\tfrac{\gamma(p_j)\log p_j}{p_j\log y} \\
    ={}& \ssum{p_1, \dotsc, p_{k-1} \in\mathcal{P}\\ \left(\tfrac{\log p_j}{\log y}\right)_{1\leq j\leq k-1}\in \Phi(U)}  \left(\kappa\int_{t_k \in I(p_1, \dotsc, p_{k-1})} f_{p_1, \dotsc, p_{k-1}}(t_k) \dd t_k + O\left(\tfrac{C}{\log y}\right) \right)\prod_{j=1}^{k-1} \tfrac{\gamma(p_j)\log p_j}{p_j \log y} \\
    ={}& \kappa\ssum{p_1, \dotsc, p_{k-1} \in\mathcal{P}\\ \left(\tfrac{\log p_j}{\log y}\right)_{1\leq j\leq k-1}\in \Phi(U)}  \left( \int_{t_k \in I(p_1, \dotsc, p_{k-1})} f_{p_1, \dotsc, p_{k-1}}(t_k) \dd t_k\right)\prod_{j=1}^{k-1}\tfrac{\gamma(p_j)\log p_j}{p_j \log y}\\
    &\quad + \tfrac{C}{\log y}O\biggl(\ssum{p_1, \dotsc, p_{k-1} \in\mathcal{P}\\ \bigl(\tfrac{\log p_j}{\log y}\bigr)_{1\leq j\leq k-1}\in \Phi(U)}\prod_{j=1}^{k-1}\tfrac{\gamma(p_j)\log p_j}{p_j \log y}\biggr).
  \end{align*}
  D'une part, dans la somme principale, la condition $\left(\frac{\log p_j}{\log y}\right)_{1\leq j\leq k-1}\in \Phi(U)$ peut être oubliée car dès qu'elle n'est pas vérifiée, l'intégrale devient nulle en vertue de la définition de $I(p_1,\ldots,p_{k-1})$. D'autre part, $\Phi(U)\subset [0,1]^{k-1}$ donc la somme dans le $O$ peut être majoré par
    \begin{align*}
        \sum_{p_1, \dotsc, p_{k-1}\leq y}\prod_{j=1}^{k-1}\tfrac{\gamma(p_j)\log p_j}{p_j \log y} =\left(\sum_{p\leq y}\tfrac{\gamma(p)\log p}{p\log y}\right)^{k-1} = \left(1 + O\left(\tfrac{1}{\log y}\right)\right)^{k-1} = O_k(1).
    \end{align*}
    Et ainsi
    \begin{align*}
        S &= \kappa\sum_{p_1, \dotsc, p_{k-1} }  \left( \int_{t_k \in I(p_1, \dotsc, p_{k-1})} f_{p_1, \dotsc, p_{k-1}}(t_k) \dd t_k\right)\prod_{j=1}^{k-1}\tfrac{\gamma(p_j)\log p_j}{p_j \log y} + O_k\left(\tfrac{C}{\log y}\right).
    \end{align*}
    Notons maintenant pour $t_k>0$
  $$ V_{t_k} = \{(t_1, \dots, t_{k-1}): (t_1, \dotsc, t_k)\in U\}, $$
  $$ f_{t_k}(t_1, \dotsc, t_{k-1}) := f(t_1, \dotsc, t_{k}). $$
  L'ensemble~$V_{t_k}$ est convexe et inclus dans~$[0, 1]^{k-1}$, et la fonction~$f_{t_k}$ satisfait bien
  $$ \|f_{t_k}\|_{\infty} + \sum_{j=1}^{k-1} \Big\|\frac{\partial f_{t_k}}{\partial t_j}\Big\|_{\infty} \leq C. $$
  Le terme principal de $S$ ci-dessus se réécrit
  $$ \kappa\int_{t_k\in[0, 1]} \ssum{p_1, \dotsc, p_{k-1} \\ \left(\frac{\log p_j}{\log y}\right)\in V_{t_k}}f_{t_k}\left(\frac{\log p_1}{\log y},\ldots,\frac{\log p_{k-1}}{\log y}\right)\prod_{j=1}^{k-1} \frac{\gamma(p_j)\log p_j}{p_j\log y}  \dd t_k. $$
  En vertu de ce qui précède, nous pouvons appliquer l'hypothèse de récurrence uniformément en~$t_k$ et intégrer le terme d'erreur obtenu, ce qui fournit
  \begin{align*}
    & \kappa\int_{t_k\in[0, 1]} \ssum{p_1, \dotsc, p_{k-1} \\ (\log p_j / \log y)\in V_{t_k}}f_{t_k}\left(\frac{\log p_1}{\log y},\ldots,\frac{\log p_{k-1}}{\log y}\right)\prod_{j=1}^{k-1} \frac{\gamma(p_j)\log p_j}{p_j\log y}  \dd t_k \\
    ={}& \kappa\int_{t_k\in [0, 1]} \kappa^{k-1}\idotsint_{V_{t_k}} f_{t_k}(t_1, \dotsc, t_{k-1}) \dd t_1 \dotsc \dd t_k + \int_0^1O_k\left(\frac{C}{\log y}\right)\dd t_k \\
    ={}& \kappa^{k}\int_U f(\bm{t})\dd \bm{t} + O_k\left(\frac{C}{\log y}\right).
  \end{align*}

\end{proof}

\begin{lem}\label{lem:casseomega}
    Pour $k\geq 1$ et tous réels $a$ et $b$ tels que $a\leq b<u$, sous la condition~\eqref{cond2},
    \begin{align*}
        \ssum{p_1,\ldots,p_k\leq y\\y^a < p_1\cdots p_k < y^b}&\frac{Q(y^u;p_1,\ldots,p_k)\gamma(p_1\cdots p_k)}{p_1\cdots p_k} =\\
        &\ssum{p_1,\ldots,p_k\leq y\\y^a < p_1\cdots p_k < y^b}\frac{Q(y^u;p_1,\ldots,p_k)\gamma(p_1)\cdots \gamma(p_k)}{p_1\cdots p_k} + O_{k}\left((u-b)^{-k}(\log y)^{-2}\right).
    \end{align*}
\end{lem}

\begin{remark}
    Pour un entier naturel non-nul $a$ fixé, la même relation est valable avec le même terme d'erreur mais avec une constante implicite dépendant éventuellement de $a$ en ajoutant la condition ``$(p_1\cdots p_k,a)=1$'' aux deux sommes.
\end{remark}

\begin{proof}
    Notons $S$ la quantité
 $$S := \ssum{p_1,\ldots,p_k\leq y\\y^a < p_1\cdots p_k < y^b}\frac{Q(y^u;p_1,\ldots,p_k)h(y;p_1,\ldots, p_k)}{p_1\cdots p_k}$$
 où $h(y;p_1,\ldots,p_{k}) := \gamma(p_1\cdots p_{k}) -\gamma(p_1)\cdots\gamma(p_k)$. Comme $\gamma$ est multiplicative, $h$ est nulle lorsque les $p_i$ sont deux à deux distincts. Nous pouvons donc supposer $p_1=p_2$ au prix d'un facteur $k!$. D'autre part, la condition~\eqref{cond2} fournit 
 $$h(y;p_1,\cdots,p_k) \ll c_1^k$$
 pour une certaine constante $c_1>0$. Ainsi,
\begin{align*}
    S&\ll_k \ssum{p_1,p_3,\cdots,p_k\leq y\\y^a<p_1p_3\cdots p_k\leq y^b}c_1^k\frac{Q(y^u;p_1,p_1,p_3,\cdots,p_k)}{p_1^2p_3\cdots p_k}\\
    &\ll_k \frac{1}{(u-b)^k(\log y)^k}\left(\sum_{p\leq y}\frac{\log p}{p^2}\right)\left(\sum_{p\leq y}\frac{\log p}{p}\right)^{k-2}\\
    &\ll_{k}(u-b)^{-k}(\log y)^{-2}.
\end{align*}
 
\end{proof}

\begin{lem}\label{lem:conda}
Soit $a\geq 1$. Pour tout $(t,s)$ vérifiant $0<t<s<u$, sous la condition~\eqref{cond2}
$$\ssum{p_1,\ldots,p_{k}\leq y\\y^{t}<p_1\cdots p_{k} \leq y^s\\(p_1\cdots p_k,a)\neq 1}\frac{Q(y^u,p_1,\ldots,p_{k})\gamma(p_1)\cdots \gamma(p_k)}{p_1\cdots p_k} = O_{k,a}\left((u-s)^{-k}(\log y)^{-1}\right).
$$
\end{lem}

\begin{proof}
    Notons $S$ la somme du lemme, $D_a$ l'ensemble des diviseurs premiers de $a$ et $r$ son cardinal. D'après la condition~\eqref{cond2}, 
    $$S \ll_k \ssum{p_1,\ldots,p_{k}\leq y\\y^{t}<p_1\cdots p_{k} \leq y^s\\(p_1\cdots p_k,a)\neq 1}\frac{Q(y^u,p_1,\ldots,p_{k})}{p_1\cdots p_k}.$$
    Notons $\mathcal{P}_k$ l'ensemble des parties de $\{1,\ldots,k\}$. La somme ci-dessus se réécrit
    \begin{align*}
        \sum_{\substack{J\in \mathcal{P}_k\\J\neq \emptyset}}\sum_{\substack{p_1,\ldots, p_k< y\\y^t<p_1\cdots p_k\leq y^s\\j\in J\Leftrightarrow p_j\in D_a}}\frac{Q(y^u;p_1,\ldots,p_k)}{p_1\cdots p_k}
        \ll_{k} ((u-s)\log y)^{-k}\sum_{\substack{J\in \mathcal{P}_k\\J\neq \emptyset}}\sum_{\substack{p_1,\ldots, p_k< y\\y^t<p_1\cdots p_k\leq y^s\\j\in J\Leftrightarrow p_j\in D_a}}\frac{\log p_1}{p_1}\cdots\frac{\log p_k}{p_k}.
    \end{align*}
    À ce stade, la condition ``$y^t<p_1\cdots p_k\leq y^s$'' peut être oubliée afin de rendre la somme interne séparable. Par ailleurs, dans la somme interne, exactement~$|J|$ nombres premiers parmi $p_1,\ldots,p_k$ sont des diviseurs de $a$. Nous avons donc
    \begin{align*}
        S&\ll_{k} ((u-s)\log y)^{-k}\sum_{\substack{J\in \mathcal{P}_k\\J\neq \emptyset}}\left(\prod_{j\in J}\ssum{p\leq y\\p|a}\frac{\log p}{p}\right)\left(\prod_{j\in \{1,\ldots,k\}\backslash J}\sum_{p\leq y}\frac{\log p}{p}\right)\\
        &\ll_{k} ((u-s)\log y)^{-k}\sum_{\substack{J\in \mathcal{P}_k\\J\neq \emptyset}}\left(\sum_{p|a}\frac{\log p}{p}\right)^{|J|}\left(\sum_{p\leq y}\frac{\log p}{p}\right)^{k-|J|}\\
        &\ll_{k,a} ((u-s)\log y)^{-k}\sum_{\substack{J\in \mathcal{P}_k\\J\neq \emptyset}}(\log y+O(1))^{k-|J|}\\
        &\ll_{k,a} ((u-s)\log y)^{-k}\sum_{i=1}^k(\log y+O(1))^{k-i}\binom{k}{i} \\
        &\ll_{k,a}(u-s)^{-k}(\log y)^{-1}. 
    \end{align*}
\end{proof}

\subsection{Démonstration du Théorème ~\ref{theo:principal}}\label{sec:demotheoprincipal}

L'ingrédient clé de notre méthode repose sur une idée qui remonte à Tchebychev et qui fut utilisée par Hildebrand \cite{Hil86} afin d'étudier le comportement asymptotique de $\Psi(x,y)$. Il s'agit d'évaluer de deux manières différentes la somme 
$$\ssum{n\leq x\\P^+(n)\leq y}\log n.$$
D'une part, le poids ``$\log n$'' se décompose grâce à la relation $\log = \Lambda*1$ et d'autre part, une transformation d'Abel permet de relier la somme ci-dessus à une intégrale. Cela fournit l'identité de Hildebrand: 
$$\Psi(x,y)\log x = \int_1^x\frac{\Psi(t,y)}{t}\,\dd t + \sum_{d\in S(x,y)}\Lambda(d)\Psi\left(\tfrac{x}{d},y\right).$$
Nous généralisons ici cette identité afin d'étudier $\Psi_{\mathcal{A}}(y)$ au lieu de $\Psi(x,y)$, ce qui mène au résultat suivant.
\begin{prop}
Pour tout $x\geq 1$,

\begin{equation}\label{eq:HilA}
    \Psi_{\mathcal{A}}(y)\log(x) = \int_1^x\Psi_{\mathcal{A}_{\leq t}}(y)\,\frac{\dd t}{t} + \ssum{d\leq x\\ P^+(d)\leq y}\Lambda(d)\Psi_{\mathcal{A}}(y,d)
\end{equation}    
    où $\mathcal{A}_{\leq t} := \mathcal{A}\cap [1,t]$.
\end{prop}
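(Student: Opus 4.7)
Le plan est d'évaluer la somme $\ssum{n\in\mathcal{A}\\P^+(n)\leq y}\log n$ de deux façons complémentaires, en suivant l'idée de Tchebychev-Hildebrand évoquée dans l'introduction. Je pars de la décomposition $\log x = \log n + \int_n^x \frac{\dd t}{t}$ (valable car $n\leq x$) et je somme sur les éléments $y$-friables de $\mathcal{A}$:
$$\Psi_{\mathcal{A}}(y)\log x = \ssum{n\in\mathcal{A}\\P^+(n)\leq y}\log n + \ssum{n\in\mathcal{A}\\P^+(n)\leq y}\int_n^x \frac{\dd t}{t}.$$

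Pour le terme intégral, j'intervertis la somme (finie) et l'intégrale: pour $t\in [1,x]$ fixé, la condition $n\leq t$ compte exactement les éléments $y$-friables de $\mathcal{A}_{\leq t}$, ce qui produit le terme $\int_1^x \Psi_{\mathcal{A}_{\leq t}}(y)\,\frac{\dd t}{t}$. Pour le terme restant, j'utilise la relation $\log = 1*\Lambda$, soit $\log n = \sum_{d\mid n}\Lambda(d)$, puis j'intervertis l'ordre de sommation:
$$\ssum{n\in\mathcal{A}\\P^+(n)\leq y}\log n = \sum_{d\geq 1}\Lambda(d)\,\Psi_{\mathcal{A}}(y,d).$$
Les conditions $d\mid n$ et $n\leq x$ impliquent $d\leq x$, tandis que $d\mid n$ combinée à $P^+(n)\leq y$ donne $P^+(d)\leq y$. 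Les termes omis étant nuls, la somme se restreint au domaine $\{d\leq x,\,P^+(d)\leq y\}$, ce qui fournit exactement le second terme de~\eqref{eq:HilA}.

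La démonstration se réduit donc à deux interversions somme/intégrale (ou somme/somme) sur des ensembles finis et à l'application de $\log = 1*\Lambda$; il n'y a pas d'obstacle technique réel. La seule subtilité conceptuelle est que la restriction $P^+(d)\leq y$ apparaît gratuitement dans le second terme, ce qui sera crucial pour la combinaison avec le Lemme~\ref{lem:PsiAd} dans la récurrence centrale (Proposition~\ref{prop:TT1T2R}).
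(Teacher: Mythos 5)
Votre démonstration est correcte et suit essentiellement la même démarche que celle de l'article : l'identité $\log x = \log n + \int_n^x \frac{\dd t}{t}$ sommée sur les éléments $y$-friables de $\mathcal{A}$ n'est qu'une écriture explicite de la sommation d'Abel utilisée dans le texte, et le traitement du terme $\sum \log n$ par $\log = 1*\Lambda$ puis interversion des sommes est identique. Rien à redire.
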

\begin{proof}
Notons 
$$S := \sum_{n\in S_{\mathcal{A}}(y)}\log n$$
où $S_{\mathcal{A}}(y) = \{n\in\mathcal{A}:P^+(n)\leq y\}$. D'une part, par sommation d'Abel,
$$S = \Psi_{\mathcal{A}}(y)\log(x) - \int_1^x\Psi_{\mathcal{A}_{\leq t}}(y)\frac{\mathrm{d}t}{t}$$
et d'autre part la relation $\log = 1*\Lambda$ fournit $$S=\sum_{n\in S_\mathcal{A}}\sum_{d|n}\Lambda(d)$$
et le résultat s'obtient en permutant les sommes.
\end{proof}

\begin{cor}
\begin{equation}\label{eq:majPsiPsid}
    \Psi_{\mathcal{A}}(y)\log x \geq \ssum{p\leq y}\Psi_{\mathcal{A}}(y,p)\log p
\end{equation}

\end{cor}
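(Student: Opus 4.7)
The plan is to read this off directly from the Hildebrand-type identity \eqref{eq:HilA} just established, by discarding two non-negative quantities.

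First I would note that the integrand $\Psi_{\mathcal{A}_{\leq t}}(y)$ is a non-negative integer for every $t \in [1,x]$, so the integral term in \eqref{eq:HilA} is $\geq 0$ and may be dropped in a lower bound. This yields
$$\Psi_{\mathcal{A}}(y)\log x \geq \sum_{\substack{d\leq x \\ P^+(d)\leq y}} \Lambda(d)\,\Psi_{\mathcal{A}}(y,d).$$
Next, since $\Lambda$ is supported on prime powers and $\Lambda(d)\Psi_{\mathcal{A}}(y,d) \geq 0$ for every $d$, I would restrict the sum on the right to the subset of $d$ that are primes (dropping the contribution of $d = p^\alpha$ with $\alpha\geq 2$, all non-negative). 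Using $\Lambda(p) = \log p$ this gives
$$\Psi_{\mathcal{A}}(y)\log x \geq \sum_{\substack{p\leq x \\ p\leq y}} \Psi_{\mathcal{A}}(y,p)\log p.$$

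Finally I would reconcile the indexing with the statement, which sums simply over $p\leq y$. If $p > x$, then any $n\in\mathcal{A}$ with $p\mid n$ would satisfy $n \geq p > x = \sup\mathcal{A}$, which is impossible, so $\Psi_{\mathcal{A}}(y,p) = 0$ in that range and the extra terms contribute nothing. Thus the condition $p\leq x$ may be suppressed, giving exactly \eqref{eq:majPsiPsid}.

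There is no real obstacle: the argument is a two-step ``drop non-negative terms'' from the identity \eqref{eq:HilA}, with only the trivial bookkeeping check that $p > x$ contributes zero. The essential content was already encoded in \eqref{eq:HilA}; the corollary just isolates the piece of that identity that is useful for the subsequent lower-bound analysis.
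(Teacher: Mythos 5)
Your proof is correct and follows exactly the paper's own argument: drop the non-negative integral in \eqref{eq:HilA}, then restrict the sum to prime indices, all discarded terms being non-negative. The extra bookkeeping about $p\leq x$ versus $p\leq y$ is a harmless (and correct) addition that the paper leaves implicit.
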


\begin{proof}
    L'intégrant de~\eqref{eq:HilA} étant positif, il suffit de minorer l'intégrale par $0$. Quant à la somme, les termes étant également positifs, nous minorons en ne conservant que les indices premiers.
\end{proof}

Nous sommes maintenant ramenés à minorer le membre de droite dans~\eqref{eq:majPsiPsid}. Dans cette optique, pour tout $n\geq 1$ posons:
$$T(y,n) := \ssum{p\leq y\\ np\leq y^u}\Psi_{\mathcal{A}}(y,pn)\log p$$
et
$$R(y,n) := \ssum{p\leq y\\ y^{u-1}<np\leq y^u}A_{pn}\log p.$$
La proposition suivante est la pierre angulaire de notre méthode. En effet, elle incite à effectuer une récurrence sur $T(y,n)$, ce qui fait l'objet de la Proposition~\ref{prop:TT1T2R}. Entre cette dernière et le Théorème~\ref{theo:principal}, il n'y a essentiellement que des calculs permettant de se ramener aux fonctions~$\mu_k$ introduites dans la partie $\ref{sec:muk}$.

\begin{prop}\label{prop:TTR}
Pour tout $n\geq 1$ $y$-friable, 
    $$T(y,n) \geq  
        \ssum{p\leq y\\ np\leq y^{u-1}}\frac{\log p}{\log\left(\frac{y^u}{pn}\right)}T(y,pn) + R(y,n).$$
\end{prop}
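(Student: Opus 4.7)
L'approche est de scinder la somme définissant $T(y,n)$ selon la taille de $np$, puis d'appliquer à la partie où $np$ est petit exactement le principe de Tchebychev--Hildebrand déjà utilisé pour obtenir~\eqref{eq:eqdebase} et~\eqref{eq:eqdebase3}.

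Plus précisément, je commence par écrire $T(y,n) = T'(y,n) + T''(y,n)$ où $T'$ (resp.\ $T''$) regroupe les $p\leq y$ tels que $np\leq y^{u-1}$ (resp.\ $y^{u-1}<np\leq y^u$). Puisque $n$ est $y$-friable et $p\leq y$, l'entier $pn$ est $y$-friable; et dans $T''$ il vérifie $y^{u-1}\leq pn\leq y^u=x$, de sorte que le Lemme~\ref{lem:PsiAd} s'applique et donne $\Psi_{\mathcal{A}}(y,pn)=A_{pn}$. On reconnaît alors $T''(y,n)=R(y,n)$.

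Pour la partie $T'$, je minore chaque terme $\Psi_{\mathcal{A}}(y,pn)$ de la façon suivante. Pour tout $m$ tel que $pnm\in\mathcal{A}$, on a $m\leq y^u/(pn)$ donc $\log m\leq \log(y^u/(pn))$. En sommant sur $m$ avec $P^+(m)\leq y$, puis en utilisant $\log m=\sum_{d|m}\Lambda(d)$ et en intervertissant les sommes (on remarque que $d|m$ et $P^+(m)\leq y$ entraînent $P^+(d)\leq y$), il vient
\begin{equation*}
\Psi_{\mathcal{A}}(y,pn)\log\!\bigl(y^u/(pn)\bigr) \;\geq\; \sum_{\substack{d\geq 1\\P^+(d)\leq y}}\Lambda(d)\,\Psi_{\mathcal{A}}(y,pnd).
\end{equation*}
Comme tous les termes sont positifs, on conserve uniquement les indices premiers $d=q\leq y$ tels que $pnq\leq y^u$, ce qui minore le second membre par $T(y,pn)$ (on notera que $pn$ est bien $y$-friable, ce qui permet d'appliquer la définition de $T$). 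En divisant par $\log(y^u/(pn))>0$ (licite car $pn\leq y^{u-1}<y^u$) et en multipliant par $\log p$ avant de sommer, j'obtiens la minoration voulue de $T'(y,n)$, d'où la proposition.

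Il n'y a pas de véritable obstacle ici: le seul point qui mérite attention est de s'assurer que chaque étape préserve la friabilité (ce qui est automatique puisque $n$ est supposé $y$-friable et tous les facteurs premiers ajoutés sont $\leq y$), et que l'on minore bien $\sum_d\Lambda(d)\Psi_{\mathcal{A}}(y,pnd)$ par sa restriction aux premiers, qui est précisément~$T(y,pn)$. Le reste consiste en une manipulation algébrique directe.
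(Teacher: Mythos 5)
Votre démonstration est correcte et suit essentiellement la même démarche que celle du papier : même découpage de la somme en $np\leq y^{u-1}$ et $y^{u-1}<np\leq y^u$, même usage du Lemme~\ref{lem:PsiAd} pour identifier la seconde partie à $R(y,n)$, et même inégalité clé $\Psi_{\mathcal{A}}(y,pn)\log(y^u/(pn))\geq T(y,pn)$ obtenue via $\log=1*\Lambda$. La seule différence est cosmétique : vous établissez cette inégalité « dans le sens direct » (en minorant $\Psi_{\mathcal{A}}(y,pn)\log(y^u/(pn))$ par la somme des $\log m$), alors que le papier part de $T(y,pn)$ et le majore, ce qui revient au même calcul lu dans l'autre sens.
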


\begin{proof}
Commençons par séparer la somme définissant $T$ comme suit:
    \begin{align*}
        T(y,n) &\geq \ssum{p\leq y\\ np\leq y^{u-1}}\Psi(y^u,pn)\log p + \ssum{p\leq y\\ y^{u-1}<np\leq y^{u}}\Psi(y^u,pn)\log p\\
        &=: S_1 + S_2.
    \end{align*}
    Dans la somme $S_2$, le Lemme~\ref{lem:PsiAd} implique que $\Psi(y^u,pn) = A_{pn}$. Cela entraîne $S_2 = R(y,n)$. Pour minorer $S_1$, il suffit de démontrer 
    $$T(y,m)\leq \Psi(y^u,m)\log\left(\frac{y^u}{m}\right)$$
    pour tout $m\geq 1$ et d'appliquer cette inégalité à $m=pn$.
    Notant $\chi_y$ la fonction indicatrice des entiers $y$-friables, nous majorons $T(y,m)$ comme suit:
    \begin{align*}
        T(y,m) &\leq \sum_{p\leq\frac{y^u}{m}}\Psi_{\mathcal{A}}(y,pm)\log p\\
        &= \sum_{p\leq\frac{y^u}{m}}\log p\sum_{k\leq\frac{y^u}{pm}}\1_{\mathcal{A}}(kpm)\chi_y(kpm)\\
        &= \ssum{k\leq\frac{y^u}{m}}\1_{\mathcal{A}}(km)\chi_y(km)\sum_{p|k}\log p\\
        &\leq \ssum{k\leq\frac{y^u}{m}}\1_{\mathcal{A}}(km)\chi_y(km)\log k\\
        &\leq \log\left(\frac{y^u}{m}\right)\ssum{k\leq\frac{y^u}{m}}\1_{\mathcal{A}}(km)\chi_y(km)\\
        &= \Psi_{\mathcal{A}}(y^u,m)\log\left(\frac{y^u}{m}\right).
    \end{align*}
\end{proof}

\begin{prop}\label{prop:TT1T2R}
    Pour tout $k\geq 1$, 
    $$T(y,1) \geq R(y,1)+ \sum_{i=1}^{k}\ssum{p_1,\ldots,p_i\leq y\\p_1\cdots p_i \leq y^{u-1}}Q(y^u,p_1,\ldots,p_i)R(y,p_1\cdots p_i)$$
\end{prop}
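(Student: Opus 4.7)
Mon approche consiste à procéder par récurrence sur $k$, la Proposition~\ref{prop:TTR} servant de mécanisme d'itération. Pour faciliter la rédaction, il est commode d'établir d'abord la forme renforcée suivante: pour tout $k\geq 0$,
$$
T(y,1) \geq \sum_{i=0}^{k-1}\ssum{p_1,\ldots,p_i\leq y\\p_1\cdots p_i \leq y^{u-1}}Q(y^u;p_1,\ldots,p_i)R(y,p_1\cdots p_i) + \ssum{p_1,\ldots,p_k\leq y\\p_1\cdots p_k \leq y^{u-1}}Q(y^u;p_1,\ldots,p_k)T(y,p_1\cdots p_k),
$$
avec la convention du produit vide (le second terme valant $T(y,1)$ pour $k=0$). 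L'énoncé de la proposition s'en déduira immédiatement en minorant chaque $T(y,p_1\cdots p_k)$ par $R(y,p_1\cdots p_k)$, minoration qui découle de la Proposition~\ref{prop:TTR} appliquée une dernière fois à $n = p_1\cdots p_k$ en négligeant le terme en $T$, qui est positif.

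Le cas $k=0$ de la forme renforcée est trivial puisque le membre de droite se réduit à $T(y,1)$. Pour l'hérédité, j'appliquerai la Proposition~\ref{prop:TTR} à chaque $T(y,p_1\cdots p_k)$ intervenant dans le dernier terme (ce qui est licite car $p_1\cdots p_k$ est $y$-friable par construction). L'ingrédient algébrique central est l'identité télescopique
$$
Q(y^u;p_1,\ldots,p_k)\cdot \frac{\log p_{k+1}}{\log(y^u/(p_1\cdots p_{k+1}))} = Q(y^u;p_1,\ldots,p_{k+1}),
$$
qui découle directement de la définition de $Q$. Lors de la substitution, la contribution en $R(y,p_1\cdots p_k)$ issue de la Proposition~\ref{prop:TTR} fournit exactement la nouvelle tranche $i=k$ du premier membre, tandis que la contribution en $T(y,p_1\cdots p_{k+1})$ reconstitue le terme résiduel de la forme renforcée au rang $k+1$.

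L'unique point d'attention concerne la compatibilité des conditions de sommation: en composant $p_1\cdots p_k \leq y^{u-1}$ et $p_1\cdots p_{k+1}\leq y^{u-1}$ (condition qui provient de la restriction $np\leq y^{u-1}$ dans la Proposition~\ref{prop:TTR}), la seconde implique la première, de sorte que le domaine obtenu coïncide exactement avec celui attendu au rang $k+1$. Hormis cette vérification, tout se ramène à une manipulation formelle de sommes multiples et à l'application itérée de la Proposition~\ref{prop:TTR}; je n'anticipe donc aucune difficulté analytique, l'obstacle principal étant simplement de mener proprement le changement d'indice dans les sommes imbriquées.
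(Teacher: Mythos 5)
Votre démonstration est correcte et suit essentiellement la même démarche que celle de l'article : récurrence sur la forme renforcée $T(y,1)\geq T_1(y,k)+T_2(y,k)$, obtenue en itérant la Proposition~\ref{prop:TTR} via l'identité télescopique sur $Q$, puis minoration finale $T(y,p_1\cdots p_k)\geq R(y,p_1\cdots p_k)$. Les seules différences sont cosmétiques (vous amorcez la récurrence au rang $k=0$ trivial plutôt qu'au rang $k=1$, et vous explicitez le passage de la forme renforcée à l'énoncé, laissé implicite dans l'article).
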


\begin{proof}
Montrons par récurrence le résultat plus fort suivant:\\
    \begin{equation}\label{eq:T2T3}
        T(y,1) \geq R(y,1) + T_1(y,k)  + T_2(y,k)
    \end{equation}
    où
    $$T_1(y,k) := \ssum{p_1,\ldots,p_k\leq y\\p_1\cdots p_k \leq y^{u-1}}Q(y^u,p_1,\ldots,p_k)T(y,p_1\cdots p_k)$$
    et 
    $$T_2(y,k) := \sum_{i=1}^{k-1}\ssum{p_1,\ldots,p_i\leq y\\p_1\cdots p_i \leq y^{u-1}}Q(y^u,p_1,\ldots,p_i)R(y,p_1\cdots p_i).$$
    Le cas $k=1$ découle de la Proposition~\ref{prop:TTR} avec $n=1$. Soit $k\geq 1$ tel que la proposition est vraie au rang $k$. La minoration de $T$ donnée dans la Proposition~\ref{prop:TTR} fournit
    \begin{align*}
        T_1(y,k) &= \ssum{p_1,\ldots,p_{k}\leq y\\p_1\cdots p_{k} \leq y^{u-1}}Q(y^u,p_1,\ldots,p_{k})T(y,p_1\cdots p_{k})\\
        &\geq \ssum{p_1,\ldots,p_k\leq y\\ p_1\cdots p_k \leq y^{u-1}}Q(y^u,p_1,\ldots,p_k)\ssum{p_{k+1}\leq y\\p_1\cdots p_{k+1}\leq y^{u-1}}\tfrac{\log p_{k+1}}{\log\left(\tfrac{y^u}{p_1\cdots p_{k+1}}\right)}T(y,p_1\cdots p_{k+1})\\
        &\quad + \ssum{p_1,\ldots,p_k\leq y\\ p_1\cdots p_k \leq y^{u-1}}Q(x,p_1,\ldots,p_k)R(y,p_1\cdots p_k)\\
        &= T_1(y,k+1) + T_2(y,k+1) - T_2(y,k)
    \end{align*}
    Il n'y a plus qu'à réinjecter dans~\eqref{eq:T2T3} pour constater que la proposition est vraie au rang~$k~+~1$.
\end{proof}

Le lemme suivant permet de minorer $R(y,p_1\cdots p_i)$ dans la formule de la Proposition~\ref{prop:TT1T2R}.

\begin{lem}\label{lem:majT}
    Pour tout $n\geq 1$ $y$-friable, sous les conditions~\eqref{cond1} et~\eqref{cond2}
    $$ R(y,n)= \kappa\frac{\gamma(n)}{n}X\log y\int_0^1\1_{]y^{u-1},y^{u\theta}]}(ny^v)\dd v + O_k\left(\tfrac{X}{n}\right)+ \ssum{p\leq y\\ np\leq y^{u\theta}}r_{np}\log p $$
    où $k:=\Omega(n)$.
\end{lem}

\begin{proof}
    Il suffit de remarquer que par définition,  $A_{np} =\frac{\gamma(np)}{np}X + r_{np}$, donc 
    \begin{align*}
        R(y,n)&= \frac{X}{n}\ssum{p\leq y\\ y^{u-1}<np\leq y^{\theta u}}\frac{\log p}{p}\gamma(np) + \ssum{p\leq y\\ y^{u-1}<np\leq y^{\theta u}}r_{np}\log p.
    \end{align*}
    Notons 
    $$S := \ssum{p\leq y\\ y^{u-1}<np\leq y^{\theta u}}\frac{\log p}{p}\gamma(np).$$
    Nous avons donc 
    \begin{align*}
        S &= \gamma(n)\ssum{p\leq y\\ y^{u-1}<np\leq y^{\theta u}}\frac{\log p}{p}\gamma(p) + \ssum{p\leq y\\ y^{u-1}<np\leq y^{\theta u}}\{\gamma(np)-\gamma(n)\gamma(p)\}\frac{\log p}{p}.
    \end{align*}
    Notons $S_M$ la première somme et $S_E$ la seconde. Comme $\gamma$ est multiplicative, les termes de $S_E$ sont nuls dès que $p$ ne divise pas $n$, ce qui permet d'écrire:
    \begin{align*}
        |S_E| &\leq \sum_{p|n} |\gamma(np)-\gamma(n)\gamma(p)|\frac{\log p}{p}\ll c^{\omega(n)}\ll_k 1
    \end{align*}
    pour une certaine constante $c$ provenant de la condition~\eqref{cond2}. En ce qui concerne $S_M$, nous appliquons le Lemme~\ref{lem:general} avec $k=1$ et $f$ la fonction constante valant $1$ sur $U=\left[u-1-\frac{\log n}{\log y},\theta u-\frac{\log n}{\log y}\right]\cap[0,1]$ , ce qui fournit bien
    \begin{align*}
        S_M = \kappa\log y\int_0^1\1_{\left]y^{u-1},y^{u\theta}\right]}(ny^v)\dd v + O(1).
    \end{align*}
\end{proof}

\begin{proof}[Démonstration du Théorème~\ref{theo:principal}]
    Plaçons nous tout d'abord dans le cas où l'hypothèse~\eqref{cond3} est vérifiée pour un certain $\theta\in\mathclose{]}1-\frac{1}{u},1\mathopen{[}$. D'après la Proposition~\ref{prop:TT1T2R} et le Lemme~\ref{lem:majT}, sous les conditions~\eqref{cond1} et~\eqref{cond2},
    \begin{align}\label{eq:sig123}
        T(y,1) &\geq R(y,1) + \Sigma_1 + \Sigma_2 + \Sigma_3 
    \end{align}
    où 
    \begin{align*}
        \Sigma_1 &= \kappa X\log y \sum_{i=1}^k\ssum{p_1,\ldots,p_{i}\leq y\\p_1\cdots p_{i} \leq y^{u-1}}\tfrac{Q(y^u,p_1,\ldots,p_{i})\gamma_y(p_1\cdots p_i)}{p_1\cdots p_i}\int_0^1\1_{\left]y^{u-1},y^{\theta u}\right]}(p_1\cdots p_iy^v)\dd v,\\
        \Sigma_2 &= X\sum_{i=1}^kO_i\left(\ssum{p_1,\ldots,p_{i}\leq y\\p_1\cdots p_{i} \leq y^{u-1}}\frac{Q(y^u,p_1,\ldots,p_{i})}{p_1\cdots p_i}\right),\\
        \Sigma_3 &= \sum_{i=1}^k\ssum{p_1,\ldots,p_{i}\leq y\\p_1\cdots p_{i} \leq y^{u-1}}Q(y^u,p_1,\ldots,p_{i})\ssum{p\leq y\\ p_1\cdots p_ip\leq y^{\theta u}}r_{p_1\cdots p_ip}\log p.
    \end{align*}
    Les termes $R(y,1)$ et $\Sigma_1$ fournissent le terme principal tandis que $\Sigma_2$ et~$\Sigma_3$ sont les termes d'erreurs.
    Nous majorons d'abord $\Sigma_2$, pour cela nous observons que
    \begin{equation}\label{eq:Qpourlem}
        Q(y^u;p_1,\ldots,p_i) = f_i\left(u;\frac{\log p_1}{\log y},\ldots,\frac{\log p_i}{\log y}\right)\prod_{j=1}^i\frac{\log p_j}{\log y}
    \end{equation}
    où $f_i$ est définie en \eqref{eq:fk}. Nous appliquons le Lemme~\ref{lem:general} avec $$U = \{\mathbf{t}\in[0,1]^i~:~t_1+\cdots + t_i\leq u-1\}\quad\text{et}\quad f=f_i(u;.),$$ ce qui fournit:
    \begin{align*}
        \ssum{p_1,\ldots,p_{i}\leq y\\p_1\cdots p_{i} \leq y^{u-1}}\frac{Q(y^u,p_1,\ldots,p_{i})}{p_1\cdots p_i} &\ll_i \int_Uf_i(u;\mathbf{t})\dd \mathbf{t}\ll_i \mathrm{Vol}(U)\ll_i 1,
    \end{align*}
    la deuxième majoration venant du fait que $f_i(u;\mathbf{t})\leq 1$ pour tout $\mathbf{t}\in U$. Finalement, nous trouvons
    \begin{equation}\label{eq:majSigma2}
       \Sigma_2 =  O_k(X) 
    \end{equation}
    Ensuite, $\Sigma_3$ se réécrit
    \begin{equation}\label{eq/sigma3}
        \Sigma_3 = \sum_{i=1}^{k+1}\ssum{p_1,\ldots,p_{i}\leq y\\p_1\cdots p_{i} \leq y^{\theta u}\\p_1\cdots p_{i-1} \leq y^{u-1}}Q(y^u,p_1,\ldots,p_{i-1})r_{p_1\cdots p_i}\log p_i
    \end{equation}
    ce qui d'après le Lemme~\ref{lem:minQE} entraîne 
    \begin{equation}\label{eq:majSigma3}
        \Sigma_3 = O_{k,\theta}(X(\log y)^{1-\delta})
    \end{equation}
    Il reste à estimer $\Sigma_1$ et $R(y,1)$:
    \begin{equation}\label{eq:eqsigma1}
        \Sigma_1 = \kappa X\log y \sum_{i=1}^k\int_0^{1}\ssum{p_1,\ldots,p_{i}\leq y\\y^{u-v-1}<p_1\cdots p_{i} \leq\min(y^{u-1},y^{u\theta-v})}\frac{Q(y^u,p_1,\ldots,p_{i})\gamma(p_1\cdots p_i)}{p_1\cdots p_i}\dd v.
    \end{equation}
    Le Lemme~\ref{lem:casseomega} nous permet de remplacer $\gamma(p_1\cdots p_i)$ par $\gamma(p_1)\cdots \gamma(p_i)$ dans la somme sous l'intégrale au prix d'un
    $$O_{i}\left(\max(1,v+u(1-\theta))^{-i}(\log y)^{-2}\right) = O_{i}\left((\log y)^{-2}\right),
    $$
    ce qui, grâce à \eqref{eq:Qpourlem}, fournit
    $$\ssum{p_1, \dotsc, p_i\in \mathcal{P} \\ \left(\frac{\log p_j}{\log y}\right)_{1\leq j\leq i}\in U(v)} f_i\left(u;\frac{\log p_1}{\log y},\ldots,\frac{\log p_i}{\log y}\right)\prod\limits_j\frac{\gamma(p_j)\log p_j}{p_j\log y}+ O_i((\log y)^{-2})$$
    avec 
    $$U(v) := \{\bm{t}\in[0,1]^i:t_1 + \cdots + t_{i} < u-1 < t_1 + \cdots + t_i + v < \theta u\}$$
    Le Lemme~\ref{lem:general} permet donc d'estimer la somme ci-dessus à l'aide d'intégrales:
    \begin{align}\label{eq:sumintavecconda}
        \kappa^i\int_{U(v)} f_i(u;t_1,\ldots,t_i)\dd t_1\cdots \dd t_i + O_i((\log y)^{-1}).
    \end{align}
    En combinant \eqref{eq:eqsigma1} et \eqref{eq:sumintavecconda}, nous obtenons
    \begin{align*}
        \Sigma_1 &= \kappa X \log y \sum_{i=1}^k\int_0^1\kappa^i\int_{U(v)} f_i(u;t_1,\ldots,t_i)\dd t_1\cdots \dd t_i\dd v + O_k(X)\\
        &= \kappa X \log y \sum_{i=1}^k\kappa^i\int_{V} f_i(u;t_1,\ldots,t_i)\dd t_1\cdots \dd t_i\dd v + O_k(X)
    \end{align*}
    où 
    \begin{align*}
        V &:= \{\mathbf{t}\in\oo{0,1}^{i+1}~:~t_1 +\cdots + t_i < u-1 < t_1 + \cdots + t_{i+1} < \theta u\}\\
        &= V_{i+1}(u,(1-\theta) u)
    \end{align*}
    d'après l'expression de $V_{i+1}$ dans la définition \ref{def:Vketmuk}. Nous avons donc
    \begin{equation}\label{eq:Sigma1final}
        \Sigma_1 = \kappa X \log x \sum_{i=2}^{k+1}\kappa^i\mu_i(u,(1-\theta)u) + O_k(X)\\
    \end{equation}
    d'après l'expression de $\mu_i$ dans la définition \ref{def:Vketmuk}. Combinant \eqref{eq:sig123}, \eqref{eq:majSigma2}, \eqref{eq:majSigma3} et \eqref{eq:Sigma1final}, nous avons montré 
    $$T(y,1) \geq R(y,1) + X\log x\sum_{i=2}^{k+1}\kappa^i\mu_i(u,(1-\theta)u) + O_{k,\theta}(X(\log y)^{1-\delta}).$$
    Par ailleurs, le lemme \ref{lem:majT} fournit
    $$R(y,1) = \kappa X \mu_1(u,(1-\theta)u)\log x + O(X(\log y)^{1-\delta}),$$
    ce qui permet d'écrire
    $$T(y,1) \geq X\log x\sum_{i=1}^{k+1}\kappa^i\mu_i(u,(1-\theta)u) + O_{k,\theta}(X(\log y)^{1-\delta}),$$
    puis avec~\eqref{eq:majPsiPsid},
    \begin{align*}
        \Psi_{\mathcal{A}}(y) &\geq X\sum_{i=1}^{k+1}\kappa^i\mu_i(u,(1-\theta)u)+ O_{k,\theta}(X(\log y)^{-\delta}).
    \end{align*}
    Choisissant $k$ convenablement par rapport à $x$, nous obtenons 
    \begin{align*}
        \Psi_{\mathcal{A}}(y) &\geq X\varrho_{\theta,\kappa}(u) + o(X)
    \end{align*}
    lorsque $x\to\infty$, par définition de $\varrho_{\theta,\kappa}$ en \eqref{eq:rhotheta}.\\

    Il reste maintenant à montrer la remarque~\ref{rem:conda1}, c'est-à-dire que le Théorème~\ref{theo:principal} reste vrai lorsque la condition~\eqref{cond3} est remplacée par la condition~\eqref{cond4}.
    Dans ce cas, la minoration~\eqref{eq:sig123} est encore valable en remplaçant respectivement $\Sigma_1$, $\Sigma_2$ et $\Sigma_3$ par $\Sigma_1'$, $\Sigma_2'$ et $\Sigma_3'$ dans lesquelles la condition~``$(p_1\cdots p_k,a)=1$'' est ajoutée  dans chaque somme concernée.\\
        La majoration $\Sigma_3'=O_{k,\theta,a}(X(\log y)^{1-\delta})$ est garantie par~\eqref{eq:sumEQ2} appliquée à~\eqref{eq/sigma3}. La majoration~$\Sigma_2'=O_k(X)$ découle directement de $\Sigma_2=O_k(X)$. En ce qui concerne $\Sigma_1'$, la relation $$\Sigma_1'=X\log y \sum_{i=1}^{k+1}\kappa^i\mu_i(u,(1-\theta)u) + O_k(X)$$
    est valable grâce au Lemme~\ref{lem:conda} appliqué à~\eqref{eq:eqsigma1}. En conclusion, le Théorème~\ref{theo:principal} reste vrai si la condition~\eqref{cond3} est remplacée par la condition~\eqref{cond4}.
\end{proof}

\section{Application aux valeurs friables de $X^2+1$}\label{sec:xcarreplus1}

Dans cette partie, nous démontrons le Théorème~\ref{theo:ncarre}. Considérons le polynôme $F(X) = X^2+1$. Nous étudions le comportement asymptotique de
$$\Psi_{F}(x,y) := \#\{n\leq x:P^+(n^2+1)\leq y\}.$$ 
Afin de se replacer dans le cadre du théorème~\ref{theo:principal}, nous choisissons $$\mathcal{A}:=\{n^2+1:n\leq x\},$$ si bien que $\Psi_F(x,y) = \Psi_\mathcal{A}(y)$.\\
Notons  $x':=\sup\mathcal{A}=x^2+O(1)$ et $u':=\frac{\log x'}{\log y}$.
Enfin, pour $d\geq 1$, 
$$\gamma(d) := \#\{0\leq\alpha<d:\alpha^2\equiv -1\bmod d\}.$$
\begin{prop}\label{eq:hyp2ncarre}
    La fonction $\gamma$ est multiplicative et vérifie les conditions~\eqref{cond1} avec $\kappa=1$ et~\eqref{cond2}. Elle vérifie aussi la condition~\eqref{cond3} avec le choix $X=x$ et $\theta=\frac{1}{2} - \frac{1}{\log\log x}$.
\end{prop}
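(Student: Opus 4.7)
Ma stratégie se décompose naturellement suivant les quatre assertions à établir. La multiplicativité de $\gamma$ découle du théorème des restes chinois: si $(d_1,d_2)=1$, la réduction modulo $d_1$ et modulo $d_2$ met en bijection les racines de $X^2+1$ dans $\Z/d_1d_2\Z$ avec les couples de racines dans $\Z/d_1\Z\times\Z/d_2\Z$. L'évaluation aux puissances de premiers s'obtient ensuite par disjonction classique: $\gamma(2)=1$ et $\gamma(2^k)=0$ pour $k\geq 2$, puisque $-1$ n'est pas un carré modulo $4$; pour $p$ impair, la théorie des résidus quadratiques donne $\gamma(p)=2$ si $p\equiv 1\pmod 4$ et $\gamma(p)=0$ sinon, puis le lemme de Hensel assure $\gamma(p^k)=\gamma(p)$ pour tout $k\geq 1$. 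Ceci fournit la majoration uniforme $\gamma(p^\alpha)\leq 2$, soit la condition~\eqref{cond2}.

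Pour~\eqref{cond1} avec $\kappa=1$, il suffira de reporter ces valeurs dans la somme et d'invoquer le théorème de Mertens dans la progression arithmétique $1\pmod 4$:
$$\sum_{p\leq z}\frac{\gamma(p)\log p}{p} = \frac{\log 2}{2} + 2\ssum{p\leq z\\p\equiv 1\pmod 4}\frac{\log p}{p} = \log z + O(1).$$

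L'étape la plus technique sera la vérification de~\eqref{cond3}. Je contrôlerai d'abord le reste en partitionnant $\{1\leq n\leq x\}$ suivant les classes de congruence associées aux solutions de $\alpha^2\equiv -1\pmod d$: chaque classe contribue $x/d+O(1)$ éléments, donc $A_d = \gamma(d)x/d + O(\gamma(d))$, d'où $|r_d|\ll \gamma(d)\leq \tau(d)$. La majoration classique $\sum_{d\leq D}\tau(d)\ll D\log D$ fournira alors $\sum_{d\leq y^{u'\theta}}|r_d|\ll y^{u'\theta}\log y$. Il restera à observer que, pour $\theta=\tfrac{1}{2}-\tfrac{1}{\log\log x}$, en utilisant $x'=x^2+O(1)$ et $u'=\log x'/\log y$,
$$y^{u'\theta} = (x')^\theta = \sqrt{x'}\cdot (x')^{-1/\log\log x}\asymp x\cdot \ee^{-2\log x/\log\log x},$$
qui est un $o(x/(\log y)^A)$ pour tout $A>0$, grâce à la croissance sur-polynomiale de $\ee^{2\log x/\log\log x}$ en $\log x$. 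Ceci établira~\eqref{cond3} avec n'importe quel $\delta>0$.

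La seule véritable subtilité est que l'inégalité $|r_d|\ll \gamma(d)$ reste grossière, puisqu'elle n'exploite aucune compensation dans la somme en $d$; cependant, la marge offerte par un $\theta$ strictement inférieur à $1/2$ (d'un facteur $1/\log\log x$) suffit amplement pour absorber ce défaut, et aucune estimation fine de type Bombieri-Vinogradov n'est requise ici.
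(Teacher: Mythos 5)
Votre démonstration est correcte et suit essentiellement la même stratégie que celle de l'article : estimation $A_d=\gamma(d)x/d+O(\gamma(d))$ par comptage des classes résiduelles, majoration triviale $\sum_{d\leq y^{u'\theta}}|r_d|\ll\sum\gamma(d)$ par une borne de type diviseurs, puis exploitation de la marge $x^{-2/\log\log x}$ offerte par le choix $\theta=\tfrac12-\tfrac{1}{\log\log x}$, qui bat toute puissance de $\log$. La seule différence est de présentation : vous rendez explicites les valeurs de $\gamma(p^\alpha)$ et le théorème de Mertens dans la progression $1\pmod 4$ là où l'article renvoie aux points (4) et (5) de Hooley, ce qui est tout aussi valable.
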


\begin{proof}
    La démonstration de la multiplicativité est similaire à celle du théorème des restes chinois.
    Pour les conditions~\eqref{cond1} et~\eqref{cond2}, il s'agit du point $(5)$ de l'article de Hooley \cite{Hoo67} dans lequel~$\varrho$ joue le rôle de notre $\gamma$. Pour la condition~\eqref{cond3}, nous verrons dans la section suivante un cas plus général, néanmoins nous donnons ici le détail dans ce cas particulier. En effet d'après le point $(4)$ de \cite{Hoo67}, 
    $$A_d = \frac{\gamma(d)}{d}x + O(\gamma(d))$$
d'où
\begin{align*}
    \sum_{d\leq y^{u'\theta}}\left| A_d-\frac{\gamma(d)}{d} x\right|\ll \sum_{d\leq y^{u'\theta}}\gamma(d)\ll \sum_{d\leq y^{u'\theta}}c^{\omega(d)} \ll x^{2\theta}(\log y)^{c-1}
\end{align*}
pour un $c>0$ dépendant seulement de $\gamma$ (en fait, $c=2$ convient).
La deuxième minoration découle de la condition~\eqref{cond1} et de la multiplicativité de $\gamma$. La troisième vient de la majoration~$\sum_{n\leq x}c^{\omega(n)} \ll x(\log x)^{c-1}$ (voir Tome I, Chapitre 6, Théorème $6.1$ de \cite{Ten15}).
Il suffit ensuite de remplacer $\theta$ par~$\frac{1}{2}-\frac{1}{\log\log x}$.
    
\end{proof}

\begin{proof}[Démonstration du Théorème~\ref{theo:ncarre}]
    D'après ce qui précède, le Théorème~\ref{theo:principal} s'applique. Il faut cependant prendre soin de ne pas confondre $x$ avec $x'$, en effet $x$ ne désigne plus la borne supérieure de $\mathcal{A}$: c'est $x'$ qui joue ce rôle ici. Ainsi, pour $\theta =\frac{1}{2}-\frac{1}{\log\log x}$ et $1\leq u' \leq \frac{1}{1-\theta}$, lorsque~$x\to\infty$,
    $$\Psi_F(x,y) = \Psi_{\mathcal{A}}(y)\geq \varrho_{\theta,1}(u')x + o(x)$$
    puis~\eqref{eq:theta12} fournit 
    $$\Psi_F(x,y)\geq \left(1-\left(\frac{1}{2}-\frac{1}{\log\log x}\right)u'\right)x + o(x),$$
    valable pour tout $1\leq u \leq 2$.
    Le théorème découle ensuite de $u' = 2u + o(1)$.
    
\end{proof}

\section{Application aux valeurs friables des polynômes irréductibles de $\Z[X]$}\label{sec:poly}

Dans cette partie, nous démontrons le Théorème~\ref{theo:Fn}.\\
Soit $\varepsilon>0$ et~$F\in\Z[X]$ irréductible, de degré $g\geq 2$. Prenons $$\mathcal{A}:=\{|F(n)|:n\leq x,\,F(n)\neq 0\},$$ si bien que $\Psi_{F}(x,y) = \Psi_{\mathcal{A}}(y)$.

Notons $x' := \sup\mathcal{A}$ et $u' := \frac{\log x'}{\log y} = gu+O\left(\frac{1}{\log x}\right)$.
Enfin, 
$$\gamma_{F}(d) := \#\{x\bmod{d}:F(x) = 0\bmod{d}\}.$$

\begin{prop}\label{prop:condpoly}
    La fonction $\gamma_{F}$ est multiplicative et vérifie les conditions~\eqref{cond1} pour $\kappa=1$ et~\eqref{cond2}. Elle vérifie aussi la condition~\eqref{cond3} avec le choix $X=x$ et $\theta=\frac{1}{g} - \frac{1}{\log\log x}$.
\end{prop}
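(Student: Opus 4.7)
Le plan est d'adapter l'argument de la Proposition~\ref{eq:hyp2ncarre} au cadre multi-facteurs. La multiplicativité de $\gamma_{F^*}$ découle directement du théorème des restes chinois: si $(d_1,d_2)=1$, la bijection $x\bmod d_1d_2\leftrightarrow(x\bmod d_1,x\bmod d_2)$ envoie les racines de $F^*$ modulo $d_1d_2$ sur les paires de racines modulo $d_1$ et modulo $d_2$.

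Pour établir~\eqref{cond1}, je décomposerais $\gamma_{F^*}(p)=\sum_{j=1}^k\gamma_{F_j}(p) - E(p)$, où $E(p)$ est non nul uniquement lorsque deux des $F_j$ partagent une racine modulo $p$, ce qui n'a lieu que pour les nombres premiers divisant l'un des résultants $\mathrm{Res}(F_i,F_j)$ ($i\neq j$), en nombre fini; leur contribution à la somme est donc $O(1)$. Pour chaque facteur irréductible $F_j$ de degré $g$ et $p\nmid\mathrm{disc}(F_j)$, $\gamma_{F_j}(p)$ compte les idéaux premiers de degré $1$ au-dessus de $p$ dans $K_j:=\Q[X]/(F_j)$, et le théorème de Landau fournit $\sum_{p\leq z}\frac{\log p}{p}\gamma_{F_j}(p) = \log z + O(1)$; sommer sur $1\leq j\leq k$ donne $\kappa=k$. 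Pour~\eqref{cond2}, je distinguerais deux cas: si $p\nmid\mathrm{disc}(F^*)$, le lemme de Hensel assure $\gamma_{F^*}(p^\alpha) = \gamma_{F^*}(p)\leq\deg F^*=kg$ pour tout $\alpha\geq 1$; pour les nombres premiers (en nombre fini) divisant $\mathrm{disc}(F^*)$, une analyse $p$-adique classique majore $\gamma_{F^*}(p^\alpha)$ uniformément en $\alpha$ par une constante dépendant de $F^*$.

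Pour~\eqref{cond3}, l'encadrement élémentaire $A_d = \frac{\gamma_{F^*}(d)}{d}x + O(\gamma_{F^*}(d))$ entraîne $|r_d|\ll\gamma_{F^*}(d)\ll c^{\omega(d)}$ (par~\eqref{cond2}), puis le Théorème~$6.1$ de~\citep{Tn} fournit $\sum_{d\leq T}\gamma_{F^*}(d)\ll T(\log T)^{c-1}$ pour une constante $c>0$ dépendant de $F^*$. Avec $T=y^{u'\theta}$, $\theta = \frac{1}{kg}-\frac{1}{\log\log x}$ et $u'=kgu + O(1/\log y)$, on obtient $y^{u'\theta} = x\cdot x^{-kg/\log\log x + o(1)}$; comme $x^{-kg/\log\log x}$ décroît plus vite que toute puissance négative de $\log x$, il absorbe largement le facteur $(\log y)^{c-1+\delta}$ et la condition~\eqref{cond3} s'ensuit.

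L'obstacle principal réside dans la justification soigneuse de~\eqref{cond2} pour les nombres premiers ramifiés de $F^*$: il faudra soit invoquer un résultat classique d'arithmétique $p$-adique (Nagell, ou théorie des polygones de Newton), soit majorer explicitement la multiplicité de chaque racine $p$-adique via la valuation $p$-adique du discriminant de $F^*$.
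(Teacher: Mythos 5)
Votre démonstration est correcte et suit essentiellement la même voie que celle de l'article, lequel se contente de renvoyer à Nagell pour~\eqref{cond1} et~\eqref{cond2} et à Halberstam--Richert pour~\eqref{cond3} : votre décomposition de $\gamma_{F^*}(p)$ en somme des $\gamma_{F_j}(p)$ via le théorème des idéaux premiers, le traitement de~\eqref{cond2} par Hensel hors des premiers ramifiés, et l'argument élémentaire $|r_d|\ll\gamma_{F^*}(d)\ll c^{\omega(d)}$ pour~\eqref{cond3} sont précisément le contenu de ces références, et ce dernier point reproduit mot pour mot l'argument que l'article détaille lui-même dans le cas particulier $X^2+1$ (Proposition~\ref{eq:hyp2ncarre}). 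Le calcul $y^{u'\theta}=x^{1-kg/\log\log x+o(1)}$ et l'absorption des puissances de $\log y$ sont exacts.
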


\begin{proof}
    De même que précédemment, la démonstration de la multiplicativité est similaire à celle du théorème des restes chinois. Les conditions~\eqref{cond1} avec $\kappa=1$ et~\eqref{cond2} sont des résultats connus~(voir \cite{Nag21}). Pour la condition~\eqref{cond3}, le lecteur pourra se référer à \cite{HR74}, I.3.14, chapitre IX.5.

\end{proof}

\begin{proof}[Démonstration du Théorème~\ref{theo:Fn}]

D'après ce qui précède, le Théorème~\ref{theo:principal} s'applique avec~$\theta =\frac{1}{g}-\frac{1}{\log\log x}$ et $1\leq u'<\frac{g}{g-1}$, 
$$\Psi_F(x,y) = \Psi_\mathcal{A}(y) \geq \varrho_{\theta,g}(u')x + o(x)$$
lorsque $x\to\infty$, ce qui d'après~\eqref{eq:theta12} entraîne pour $x^{g-1}<y<x^{g}$:
$$\Psi_F(x,y) \geq \{1+u-gu+o(1)\}x.$$

\end{proof}

\section{Application aux formes binaires à coefficients entiers}\label{sec:formebinaire}

Dans cette partie, nous démontrons le Théorème~\ref{theo:Formebinaire}. Soit $F(X,Y)$ une forme binaire à coefficients entiers, irréductible, de degré $t\geq 2$. Posons $$\mathcal{A}:=\{|F(a,b)|:1\leq a,b \leq x,\,(a,b)=1,\,F(a,b)\neq 0\}.$$ En particulier, $\Psi_F(x,y) \geq \Psi_{\mathcal{A}}(y)$. Notons encore $x' := \sup \mathcal{A}$ et $u' := \frac{\log x'}{\log y}$. En particulier,~$u' = ut + o(1)$ lorsque $x\to\infty$. De plus, si $d\geq 1$, alors 
\begin{align*}
    A_d &= \#\{1\leq a,b\leq x:(a,b)=1,\,F(a,b)\neq 0\text{ et }d|F(a,b)\}.
\end{align*}
Conservant la notation de Greaves (\cite{Gre71}, (2.2.1)), notons pour $d\geq 1$
$$\gamma^*(d) := \frac{1}{d}\#\{1\leq\alpha,\beta\leq d:F(\alpha,\beta)\equiv 0\pmod{d}\text{ et }(\alpha,\beta,d)=1\}.$$

\begin{prop}
    La fonction $\gamma^*$ est multiplicative et vérifie les conditions~\eqref{cond1} avec $\kappa=1$ et~\eqref{cond2}. Elle vérifie aussi la condition~\eqref{cond3} avec le choix $X=x^2$ et $\theta = \frac{2}{t+\varepsilon_x}(1 - \frac{1}{\log\log x})$ où $\varepsilon_x := u'/u-~t \tendvers{x}{+\infty} 0$.
\end{prop}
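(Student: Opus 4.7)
La démonstration se décompose en trois parties.

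Premièrement, la multiplicativité de $\gamma^*$ résulte directement du théorème des restes chinois, exactement comme pour les fonctions $\gamma$ rencontrées dans les Propositions~\ref{eq:hyp2ncarre} et~\ref{prop:condpoly}. Si $d=d_1d_2$ avec $(d_1,d_2)=1$, l'isomorphisme $\Z/d\Z\cong\Z/d_1\Z\times\Z/d_2\Z$ induit une bijection sur les paires $(\alpha,\beta)$ préservant à la fois la condition $F(\alpha,\beta)\equiv 0$ et la condition de coprimalité $(\alpha,\beta,d_i)=1$ pour $i=1,2$.

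Deuxièmement, les conditions~\eqref{cond1} avec $\kappa=1$ et~\eqref{cond2} sont classiques pour les formes binaires irréductibles. La majoration $\gamma^*(p^\alpha)=O(1)$ s'obtient en combinant un argument de relèvement de Hensel sur la variété projective $F=0$ avec la borne $\gamma^*(p)\ll_t 1$ issue de l'irréductibilité (le nombre de points projectifs de $F=0$ sur $\mathbb{F}_p$ étant uniformément borné en $p$). Quant à~\eqref{cond1}, le fait que $\kappa=1$ traduit le résultat de Landau selon lequel un polynôme irréductible admet en moyenne une racine modulo $p$; l'estimation pondérée $\sum_{p\leq z}(\log p/p)\gamma^*(p)=\log z+O(1)$ est établie dans Greaves~\citep{Gre71}, section~2, ou peut se déduire du théorème de Landau appliqué au corps de nombres $\Q[X]/(F(X,1))$.

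Le point délicat, et véritable obstacle de la proposition, est la vérification de~\eqref{cond3} pour $\theta$ aussi proche que possible de sa valeur limite $2/t$. L'approche consiste à partir de la décomposition
$$A_d=\ssum{1\leq\alpha,\beta\leq d\\F(\alpha,\beta)\equiv 0\,(d)\\(\alpha,\beta,d)=1}\#\{1\leq a,b\leq x;\,(a,b)=1,\,a\equiv\alpha,\,b\equiv\beta\,(d)\},$$
puis à supprimer la condition $(a,b)=1$ par inversion de Möbius. Pour chaque classe $(\alpha,\beta)$ admissible, le cardinal cherché vaut $x^2/d^2$ à une erreur $O(x/d)$ près. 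La sommation sur les $d\gamma^*(d)$ classes de résidus produit le terme principal $\gamma^*(d)x^2/d$ et un reste individuel de taille $O(\gamma^*(d)(x+d))$.

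Il reste à démontrer que
$$\ssum{d\leq y^{u'\theta}}|r_d|\ll\frac{x^2}{(\log y)^{\delta}}$$
pour un certain $\delta>0$, sachant que $y^{u'\theta}=x^{2-\eta_x}$ avec $\eta_x\asymp 1/\log\log x$. Les majorations du type $\sum_{d\leq D}\gamma^*(d)\ll D(\log D)^{c}$ découlent de~\eqref{cond1}, de la multiplicativité, et du théorème de Wirsing, ce qui règle la contribution des erreurs ``$\gamma^*(d)\cdot x$'' par $x\cdot x^{2-\eta_x}(\log x)^c = o(x^2)$. La contribution résiduelle $\sum_{d\leq D}d\gamma^*(d)$, qui est \textit{a priori} de taille $D^2$ et donc délicate lorsque $D$ approche $x^2$, est contrôlée par les résultats fins de niveau de répartition pour les formes binaires établis dans Greaves~\citep{Gre71} (estimations de type grand crible pour $F$). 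C'est précisément la force de ces bornes qui impose la marge $\varepsilon_x\asymp 1/\log\log x$ dans la définition de $\theta$: sans ce léger retrait, l'erreur $\sum_d d\gamma^*(d)$ ne serait plus absorbable par $X=x^2$.
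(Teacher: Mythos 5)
Les deux premières parties de votre démonstration (multiplicativité, conditions \eqref{cond1} avec $\kappa=1$ et \eqref{cond2}) sont correctes et suivent essentiellement la même voie que l'article, qui renvoie aux points (2.2.2) et (2.2.3) de Greaves \citep{Gre71} et à la Proposition~\ref{prop:condpoly} appliquée au polynôme irréductible $F(X,1)$.

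En revanche, la vérification de la condition~\eqref{cond3} comporte une lacune réelle. Votre décomposition en classes de résidus donne, pour chaque $d$, une erreur individuelle $O(\gamma^*(d)(x+d))$; en sommant sur $d\leq D$ avec $D=y^{u'\theta}=x^{2-\eta_x}$ et $\eta_x\asymp 1/\log\log x$, le terme $x\sum_{d\leq D}\gamma^*(d)\ll xD(\log D)^c$ vaut $x^{3-\eta_x}(\log x)^c=x^{3-o(1)}$, ce qui n'est absolument pas $o(x^2)$: l'affirmation ``$x\cdot x^{2-\eta_x}(\log x)^c=o(x^2)$'' est fausse, car $x^{\eta_x}=\exp(c'\log x/\log\log x)$ croît beaucoup moins vite que $x$. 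Le point dur n'est donc pas seulement le terme $\sum_{d}d\gamma^*(d)$ que vous isolez, mais déjà le terme linéaire en $x$. Il faut un gain \emph{en moyenne} sur $d$ --- concrètement remplacer le facteur $D$ par $D^{1/2}$ --- que le comptage trivial ne fournit pas. C'est exactement le contenu du Lemme~3.2 de Daniel \citep{Dan99}, que l'article invoque: avec $\nu_t:=t(1+2t)^{t+1}$, $M=4x$ et $Q=y^{u'\theta}$, on a $\sum_{d\leq Q}\bigl|A_d-\tfrac{\gamma^*(d)}{d}x^2\bigr|\ll xQ^{1/2}(\log 2Q)^{\nu_t}+Q(\log 2Q)^{2t}$, et le choix de $\theta$ (la marge $1/\log\log x$ servant précisément à dominer les puissances de $\log$) rend chacun de ces deux termes $\ll x^2/\log y$. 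Votre renvoi vague à des ``estimations de type grand crible'' chez Greaves ne remplace pas cette référence précise, d'autant que vous l'appliquez au mauvais terme.
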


\begin{proof}
    Dans \cite{Gre71}, la notation $\rho$ correspond à notre $\gamma$ de la partie précédente car~$F(X,1)$ est un polynôme irréductible de $\Z[X]$. Donc le point (2.2.2) \emph{ibid.} et notre Proposition~\ref{prop:condpoly} fournissent
\begin{equation*}
    \sum_{n\leq x}\frac{\log p}{p}\gamma^*(p) = \log x + O(1)
\end{equation*}
et la condition~\eqref{cond1} est vérifiée. La condition~\eqref{cond2} correspond au point (2.2.3) dans \cite{Gre71}. Pour montrer que la condition~\eqref{cond3} est vérifiée, nous avons besoin d'un résultat de Daniel~(\cite{Dan99}, Lemme~3.2): avec la notation $\nu_t := t(1+2t)^{t+1}$ et en choisissant $M=4x$ et $Q=y^{u'\theta}$, il vient
\begin{align*}
    \sum_{d\leq y^{u'\theta}}\left|A_d-\frac{\gamma^*(d)}{d}x^2\right| &\ll x y^{u'\theta/2}(\log 2y^{u'\theta})^{\nu_t} + y^{u'\theta}(\log 2y^{u'\theta})^{2t}\\
    &=  x^{1+\theta(t+\varepsilon_x)/2}(\log y)^{\nu_t} + x^{\theta(t+\varepsilon_x)}(\log y)^{2t}
\end{align*}
Le choix $\theta = \frac{2}{t+\varepsilon_x}(1 - \frac{1}{\log\log x})$ fournit
\begin{equation}\label{eq:cond3form}
    \sum_{d\leq y^{u'\theta}}\left|A_d-\frac{\gamma^*(d)}{d}x^2\right| \ll \frac{x^2}{\log y}.
\end{equation}
\end{proof}

\begin{proof}[Démonstration du Théorème~\ref{theo:Formebinaire}]

D'après ce qui précède, le Théorème~\ref{theo:principal} s'applique avec le choix de $\theta$ ci-dessus et $X = x^2$, ce qui fournit 
\begin{align*}
    \Psi_F(x,y) &\geq \varrho_{\theta,1}(u')x^2 + o(x^2)
\end{align*}
lorsque $x\to \infty$, valable dans le domaine défini par $(1-\theta)u' \leq 1$, qui se traduit par $x\geq y^{t-2}$.\\
Dans le cas où $t=2$, nous avons $\theta = 1-\varepsilon_x'$ avec $\varepsilon_x'\tendvers{x}{+\infty} 0$, donc d'après le point~\eqref{eq:theta1} du Corollaire~\ref{cor:rhotheta}, 
\begin{align*}
    \Psi_F(x,y) &\geq \{\varrho(u')+o(1)\}x^2 + o(x^2)\\
    &= \varrho(2u+o(1))x^2 + o(x^2)\\
    &= \varrho(2u)x^2 + o(x^2).
\end{align*}
Dans le cas où $t=3$, nous avons $\theta = 2/3-\varepsilon_x''$ avec $\varepsilon_x''\tendvers{x}{+\infty} 0$, donc d'après le point~\eqref{eq:theta23} du Corollaire~\ref{cor:rhotheta},
\begin{align*}
    \Psi_F(x,y) &\geq f(u)x^2 + o(x^2)
\end{align*}
où
$$f(u)=\begin{cases}
                    1-u,&\quad\text{si } \frac{1}{3}<u\leq \frac{1}{2}\\
                    1-u+\log\left(\frac{1}{3u}+\frac{1}{3}\right),&\quad\text{si } \frac{1}{2}<u\leq \frac{2}{3}\\
                    1-u+\log\left(\frac{1}{3u}+\frac{1}{3}\right) + u\log\frac{3u}{2},&\quad\text{si } \frac{2}{3}<u\leq 1.
                \end{cases}$$
Dans le cas où $t\geq 4$, nous avons $\theta <1/2$, donc d'après le point~\eqref{eq:theta12} du Corollaire~\ref{cor:rhotheta}, 
\begin{align*}
    \Psi_F(x,y) &\geq (1-(t-2)u)x^2 + o(x^2).
\end{align*}

\end{proof}

\begin{remark}
    Les cas $t\leq 3$ sont traités ici mais ils sont dépassés par les estimations beaucoup plus précises de Lachand (voir remarque~\ref{rem:Lachand}).  
\end{remark}

\section{Application aux entiers friables voisins}\label{sec:voisins}

Dans cette partie, nous démontrons le Théorème~\ref{theo:HaretPas} et le Corollaire~\ref{cor:Wan}.\\
Soient $a\in\Z^*$ et $x,u,v\geq 1$. Nous étudions la quantité 
$$\Psi(x;u,v) := \{n\leq x:P^+(n)\leq x^{1/u}\mbox{ et } P^+(n+a)\leq x^{1/v}\}.$$
Il faut ainsi choisir $\mathcal{A} = \{n\leq x:P^+(n+a)\leq x^{1/v}\}$, si bien que 
$$\Psi(x;u,v) = \Psi_{\mathcal{A}}(y).$$
Pour simplifier les notations, notons $z = x^{1/v}$. Afin d'appliquer le Théorème~\ref{theo:principal}, il faut pouvoir estimer $$A_d = \#\{n\leq x+a:P^+(n)\leq z,\text{ et }n\equiv a\bmod d\} = \Psi(x,z;a,d) + O(1)$$ où nous avons posé 
$$\Psi(x,z;a,d):= \#\{n\leq x:P^+(n)\leq z,\text{ et }n\equiv a \bmod d\}.$$
Heuristiquement, nous nous attendons à ce que les entiers $z$-friables soient équirépartis dans les classes modulo $d$, soit pour $(a,d)=1$
\begin{equation}\label{eq:approxPhi}
    \Psi(x,z;a,d) \overset{?}{\sim} \frac{\Psi_d(x,z)}{\varphi(d)}.
\end{equation}
Nous savons que c'est la cas lorsque le module $d$ considéré est fixé et plus généralement pour des modules vérifiant $d\leq z^{4\sqrt{\ee}-\delta}$ pour $\delta>0$ fixé et~$\log x/\log d \to\infty$ \cite{Sou08,Har12b}. Nous disposons également de théorèmes de majoration de l'erreur en moyenne.
\begin{defn}
    Pour $\theta\in\oo{0,1}$ et $\Lambda\subset \fo{1,+\infty}^2$, nous disons que les entiers friables ont un exposant de répartition $\theta$ dans le domaine $\Lambda$ s'il existe $\delta>0$ tel que la majoration suivante est valable pour tous $(x,z)\in \Lambda$, $a\in\Z^*$:
$$\ssum{q\leq x^\theta\\(a,q) = 1}\left| \Psi(x,z;a,q)-\frac{\Psi_q(x,z)}{\varphi(q)} \right| \ll_{\theta,a} \frac{\Psi(x,z)}{(\log x)^\delta}.$$
\end{defn}
\begin{remark}
    Dans la littérature, il est plus fréquent de rencontrer un majorant plus fort, de la forme~$\Psi(x,z)/(\log x)^A$ pour tout $A>0$, mais ce n'est pas utile pour notre application.
\end{remark}
La recherche d'un meilleur exposant $\theta$ possible a une longue histoire en commençant par les travaux de Wolke en 1973 \cite{Wol73,FT91,FT96,Gra93a,Gra93b,Har12a,Dra15} et enfin tout récemment, Pascadi~\cite{Pas24} obtient l'exposant $\theta = 5/8-\varepsilon$, à condition que $(\log x)^C \leq z\leq x^{1/C}$ pour un certain $C=C(A,\varepsilon)>0$ (voir Théorème~\ref{theo:Pascadi}). Dans un article à paraître, Pascadi montre que l'exposant $\theta=5/8-\varepsilon$ est valable, toujours sous la conddition $(\log x)^C \leq z\leq x^{1/C}$ pour un certain $C=C(A,\varepsilon)>0$.  Il est à noter que nous sommes encore loin de la conjecture d'Elliot-Halberstam pour les entiers friables (Conjecture~\ref{conj:EH} ci-dessous).  \\

Pour pouvoir appliquer le Théorème~\ref{theo:principal}, nous choisissons la fonction $\gamma$ comme suit
\begin{equation}\label{eq:gammavoisins}
    \gamma(d) := \frac{d}{\varphi(d)}g_d(x,z)
\end{equation}
où 
$$g_d(x,z) := \prod_{p|d}\left(1-\frac{1}{p^{\alpha(x,z)}}\right)$$
où $\alpha(x,z)$ est le point col (voir \cite{Ten15}, Tome III, Chapitre 5, Théorème 5.2), unique solution de l'équation 
$$\sum_{p\leq z}\frac{\log p}{p^\alpha - 1} = \log x.$$
En particulier, $\alpha(x,z)\in\oo{0,1}$, ce qui entraîne $g_d(x,z) < \varphi(d)/d$.
\begin{prop}\label{prop:gammavoisins}
    Soit $\varepsilon>0$. La fonction $\gamma$ est multiplicative et vérifie les conditions~\eqref{cond1} avec~$\kappa=1$ et~\eqref{cond2}. De plus, si les entiers friables ont un exposant de répartition $\theta\in\of{0,1}$ dans un domaine $$\Lambda\subset \{x\geq 1:(\log x)^{1+\varepsilon}<z<x\},$$ alors $\gamma$ vérifie la condition~\eqref{cond4} avec le choix~$X=\Psi(x,z)$.
\end{prop}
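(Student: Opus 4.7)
La multiplicativité de $\gamma$ est directe puisque $d\mapsto d/\varphi(d)$ et $d\mapsto g_d(x,z)$ sont des produits sur les facteurs premiers de $d$. La condition~\eqref{cond2} suit immédiatement de
$$\gamma(p^k) = \frac{p}{p-1}\bigl(1-p^{-\alpha(x,z)}\bigr) \leq 2 \quad (p\geq 2,\ k\geq 1),$$
sans aucune hypothèse sur $\alpha$.

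Pour~\eqref{cond1} avec $\kappa=1$, je partirais de
$$\sum_{p\leq w}\frac{\log p}{p}\gamma(p) = \sum_{p\leq w}\frac{\log p}{p-1}\bigl(1-p^{-\alpha}\bigr) = \sum_{p\leq w}\frac{\log p}{p}\bigl(1-p^{-\alpha}\bigr) + O(1).$$
Le premier théorème de Mertens fournit $\sum_{p\leq w}\log p/p = \log w+O(1)$; il reste à majorer uniformément $\sum_p \log p/p^{1+\alpha}$, ce qui est possible dès que $\alpha$ est minoré par une constante strictement positive. Or, dans le régime $(\log x)^{1+\varepsilon}<z<x$, la formule asymptotique standard sur la fonction selle (cf.~\citep{Tn}, III.5.4) donne $\alpha(x,z) \geq \varepsilon/(1+\varepsilon) + o(1)$, assurant l'uniformité requise.

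Pour~\eqref{cond4}, je scinderais l'erreur en trois morceaux :
$$r_q = \bigl(A_q - \Psi(x,z;a,q)\bigr) + \Bigl(\Psi(x,z;a,q) - \frac{\Psi_q(x,z)}{\varphi(q)}\Bigr) + \Bigl(\frac{\Psi_q(x,z)}{\varphi(q)} - \frac{X\gamma(q)}{q}\Bigr).$$
Le premier terme est $O(1)$, donc négligeable une fois sommé sur $q\leq x^\theta$. Le deuxième est directement contrôlé par l'hypothèse d'exposant de répartition $\theta$, après paramétrisation $q=p_1\cdots p_k$ (au plus $k!$ répétitions par $q$). La contrainte de convexité sur $U$ dans~\eqref{cond4} ne joue aucun rôle, puisqu'il suffit de majorer $\sum_q|r_q|$ sur la plage $q\leq x^\theta$. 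Le troisième terme constitue la principale difficulté: il nécessite l'approximation $\Psi_q(x,z) = \Psi(x,z)g_q(x,z)(1+o(1))$, valide pour $q$ sans facteur carré et $z$-friable et obtenue à partir de $\Psi_q(x,z)=\sum_{d|q}\mu(d)\Psi(x/d,z)$ combinée à $\Psi(x/d,z)\sim \Psi(x,z)d^{-\alpha}$ (cf.~\citep{Tn}, III.5). Le cas où certains facteurs premiers de $q$ excèdent $z$ (possible si $u<v$) se traite en réduisant à la partie $z$-friable $q^*$ de $q$ et en exploitant la minoration sur $\alpha$ pour contrôler le rapport $g_q(x,z)/g_{q^*}(x,z) = \prod_{p|q,\,p>z}(1-p^{-\alpha})$. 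Le contrôle précis de la somme résultante et la vérification qu'elle est un $O(X/(\log y)^\delta)$ constituent l'essentiel du travail.
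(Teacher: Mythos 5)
Your plan is essentially the paper's proof: the same three-way splitting of $r_q$ (this is exactly the paper's inequality \eqref{eq:Fxzd}), the same $O(1)$ treatment of $A_q-\Psi(x,z;a,q)$, the same appeal to the distribution exponent for the middle term, and the same observation that the convexity of $U$ is harmless because one bounds $\sum|r_q|$ at fixed $\Omega(q)=k$ with $(q,a)=1$. The one step you leave open --- the quantitative approximation $\Psi_q(x,z)=\Psi(x,z)\,g_q(x,z)\{1+O(\cdot)\}$ with an error that remains summable over $q\leq x^{\theta}$ --- is precisely what the paper takes off the shelf from La Bretèche--Tenenbaum (\citep{dlBT05}, Théorème 2.1 et Corollaire 2.2), the lower bound $\overline{\alpha}\gg_\varepsilon 1$ there playing the role of your $\alpha(x,z)\geq \varepsilon/(1+\varepsilon)+o(1)$.
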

\begin{proof}
    La multiplicativité de $\gamma$ découle de celles des fonctions $\varphi$ et $d\mapsto g_d(x,z)$. Les conditions~\eqref{cond1} et $\eqref{cond2}$ se déduisent directement du fait que pour tout premier $p$ et $a\geq 1$,
    $$\gamma(p^a) = \gamma(p) = 1 + O\left(\frac{1}{p^{\alpha(x,z)}}\right)\quad \text{et} \quad \alpha(x,z) \gg_\varepsilon 1.$$
    Pour la minoration de $\alpha(x,z)$, le lecteur pourra se référer par exemple à~\cite{Ten15}, Tome III, Chapitre 5, équation (5.74).
    Il reste maintenant à vérifier la condition de majoration~\eqref{cond4}. En choisissant la fonction $X := \Psi(x,z)$, nous avons pour $d\geq 1$,
\begin{align*}
    r_d = A_d - \Psi(x,z)\frac{g_d(x,z)}{\varphi(d)}.
\end{align*}
L'inégalité triangulaire et le fait que $A_d = \Psi(x,z;a,d) + O(1)$ entraînent
\begin{equation}\label{eq:Fxzd}
    |r_d| \leq \left|\Psi(x,z;a,d) - \frac{\Psi_d(x,z)}{\varphi(d)}\right| + \frac{1}{\varphi(d)}\left|\Psi_d(x,z) -\Psi(x,z)g_d(x,z)\right| + O(1).
\end{equation}
Notons 
$$F(x,z;d) := \frac{1}{\varphi(d)}\left|\Psi_d(x,z) -\Psi(x,z)g_d(x,z)\right|.$$
Une étude détaillée du comportement de la quantité $\Psi_d(x,z)$ a été mené par La Bretèche et Tenenbaum dans~\cite{dlBT05}. En particulier, si $k:=\Omega(d)$ est fixé, alors $\gamma(d) \ll_k 1$ et le Théorème~$2.1$ de~\cite{dlBT05} associé au Corollaire $2.2$ \emph{ibid.} permettent dans ce cas d'écrire
$$F(x,z;d) \ll_k \frac{g_d(x,z)}{\overline{\alpha}v\varphi(d)}\frac{\Psi(x,z)}{\log z}\ll_k\frac{1}{\overline{\alpha}d}\frac{\Psi(x,z)}{\log x}$$
où la notation $\overline{\alpha}$ est empruntée à \cite{dlBT05}:
$$\overline{\alpha} := \frac{\log(1+\frac{z}{\log x})}{\log z}> \frac{\varepsilon}{1+\varepsilon} + O\left(\frac{1}{\log z}\right)\gg_\varepsilon 1,$$
la minoration venant du fait que $z>(\log x)^{1+\varepsilon}$. Ainsi, à $\varepsilon>0$ fixé,
\begin{align*}
    \ssum{d\in S(y^{u\theta},y)\\(a,d)=1\\ \Omega(d) = k}F(x,z;d) & \ll_k \frac{\Psi(x,z)}{\log x}\ssum{d\in S(y^{u\theta},y)\\(a,d)=1\\ \Omega(d) = k}\frac{1}{d}\\
    &\ll_k \frac{\Psi(x,z)}{\log x}\ssum{d\leq y^{u\theta}\\\Omega(d)=k}\frac{1}{d}\\
    & \ll_k\frac{\Psi(x,z)}{\log x}(\log\log y)^k.
\end{align*}
Il s'ensuit que pour tout $0<\delta<1$,
\begin{equation}\label{eq:majFxzd}
    \ssum{d\in S(y^{u\theta},y)\\(a,d)=1\\ \Omega(d) = k}F(x,z;d) \ll_k \frac{\Psi(x,z)}{(\log y)^\delta}.
\end{equation}
Finalement, de~\eqref{eq:Fxzd} et~\eqref{eq:majFxzd} il découle
\begin{align*}
    \ssum{d\in S(y^{u\theta},y)\\(a,d)=1\\ \Omega(d) = k}|r_d| &= \ssum{d\in S(y^{u\theta},y)\\(a,d)=1\\ \Omega(d) = k}\left|\Psi(x,z;a,d)-\frac{\Psi_d(x,z)}{\varphi(d)}\right| + O_k\left(\frac{\Psi(x,z)}{(\log y)^\delta}\right)\\
    &\leq \ssum{d\leq y^{u\theta}\\(a,d)=1}\left|\Psi(x,z;a,d)-\frac{\Psi_d(x,z)}{\varphi(d)}\right| + O_k\left(\frac{\Psi(x,z)}{(\log y)^\delta}\right)\\
    &\ll_{k,a} \frac{\Psi(x,z)}{(\log y)^\delta}
\end{align*}
grâce à l'hypothèse de l'énoncé selon laquelle les entiers friables ont un exposant de répartition~$\theta$ dans le domaine $\Lambda$. En particulier, la fonction $\gamma$ vérifie la condition~\eqref{cond4}.

\end{proof}

\subsection{Cas inconditionnel}\label{sec:voisinincond}

Nous démontrons ici le Théorème~\ref{theo:HaretPas}. Pour cela, nous avons besoin du résultat suivant dû à Harper:

\begin{theo}[\cite{Har12a}, Theorem 1]\label{theo:Harper}
Il existe $K,c>0$ tels que pour $x$ et $z$ suffisamment grands, vérifiant $(\log x)^K<z<x$ et $1\leq Q\leq \sqrt{\Psi(x,z)}$, pour tout $A>0$,
$$\sum_{q\leq Q}\underset{(a,q) = 1}{\max}\left| \Psi(x,z;a,q)-\tfrac{\Psi_q(x,z)}{\varphi(q)} \right| \ll_A \Psi(x,z)\left(\tfrac{ \ee^{\tfrac{-cv}{\log^2(v+1)}}}{\log^A x}+z^{-c}\right) + \sqrt{\Psi(x,z)}Q\log^{7/2}(x)$$
où nous rappelons que $v$ est défini par la relation $z:=x^{1/v}$.
\end{theo}

Dans un premier temps, nous énonçons un corollaire de ce théorème qui est plus maniable et plus adapté à notre application.

\begin{cor}\label{cor:Harper}
Pour tout $\varepsilon\in\oo{0,1/2}$, il existe $x_\varepsilon\geq 1$ et $K_\varepsilon>0$ tels que les entiers friables ont un exposant de répartition de $1/2-\varepsilon$ dans le domaine $$\Lambda_\varepsilon := \{x>x_\varepsilon,\,(\log x)^{K_\varepsilon}<z<x\}.$$
\end{cor}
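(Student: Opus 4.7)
Plan de démonstration. Fixons $\varepsilon \in \oo{0, 1/2}$ et posons $\theta := 1/2 - \varepsilon$. L'approche naturelle consiste à appliquer le Théorème~\ref{theo:Harper} avec $Q = x^\theta$, puis à vérifier que chacun des termes du majorant est un $O(\Psi(x,z)/(\log x)^\delta)$ pour un certain $\delta>0$, à condition que $K_\varepsilon$ soit choisi assez grand en fonction de $\varepsilon$.

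Premièrement, pour tout $a\in\Z^*$, la majoration élémentaire
$$\ssum{q\leq x^\theta\\(a,q)=1}\left|\Psi(x,z;a,q) - \frac{\Psi_q(x,z)}{\varphi(q)}\right| \leq \sum_{q\leq x^\theta}\underset{(b,q)=1}{\max}\left|\Psi(x,z;b,q) - \frac{\Psi_q(x,z)}{\varphi(q)}\right|$$
ramène l'estimation à contrôler le membre de droite, qui est précisément l'objet du Théorème~\ref{theo:Harper}. Celui-ci requiert la condition $Q = x^\theta \leq \sqrt{\Psi(x,z)}$. Les estimations classiques de type de Bruijn--Hildebrand (voir \citep{Tn}, chap.~III.5) donnent, pour $z\geq \log x$, $\log \Psi(x,z) = \log x - (1+o(1))u\log u$ où $u := \log x/\log z$. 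Pour $z\geq (\log x)^{K_\varepsilon}$, nous avons $u\log u \leq (1+o(1))(\log x)/K_\varepsilon$, d'où $\Psi(x,z)\geq x^{1-1/K_\varepsilon + o(1)}$. Le choix $K_\varepsilon > 1/(2\varepsilon)$ garantira alors l'inégalité $x^\theta \leq \sqrt{\Psi(x,z)}$ pour $x$ suffisamment grand.

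Les trois termes du majorant de Harper se contrôlent ensuite comme suit, avec $A > 0$ à ajuster: le premier, $\Psi(x,z) \ee^{-cv/\log^2(v+1)}/(\log x)^A$, est trivialement $\ll \Psi(x,z)/(\log x)^A$, il suffit donc de prendre $A \geq \delta + 1$; le deuxième, $\Psi(x,z)z^{-c}$, est $\ll \Psi(x,z)(\log x)^{-cK_\varepsilon}$, acceptable dès que $K_\varepsilon \geq (\delta+1)/c$; le troisième, $\sqrt{\Psi(x,z)}\, x^\theta (\log x)^{7/2}$, est majoré par $\Psi(x,z)/(\log x)^\delta$ si et seulement si $\Psi(x,z) \geq x^{2\theta}(\log x)^{7+2\delta}$, ce qui découle à nouveau du choix $K_\varepsilon > 1/(2\varepsilon)$ conjugué à la minoration de $\Psi(x,z)$ établie plus haut.

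Le principal obstacle réside dans ce dernier terme: la perte de type Cauchy--Schwarz en $\sqrt{\Psi(x,z)}\,Q$ inhérente à la méthode de Harper interdit tout exposant $\theta \geq 1/2$ et impose une minoration de $\Psi(x,z)$ par une puissance $x^{1-2\varepsilon + o(1)}$. C'est précisément l'exigence $K_\varepsilon > 1/(2\varepsilon)$ (et plus généralement $K_\varepsilon \geq \max(K,\,(\delta+1)/c,\,1/(2\varepsilon))$ où $K$ est la constante du Théorème~\ref{theo:Harper}) qui fournit cette minoration; le reste n'est qu'une vérification directe à partir du Théorème~\ref{theo:Harper}.
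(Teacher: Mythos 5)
Votre démonstration est correcte et suit essentiellement la même route que celle de l'article~: application du Théorème~\ref{theo:Harper} avec $Q=x^{1/2-\varepsilon}$, minoration $\Psi(x,z)\geq x^{1-1/K_\varepsilon+o(1)}$ pour $z\geq(\log x)^{K_\varepsilon}$ afin de garantir $Q\leq\sqrt{\Psi(x,z)}$ et d'absorber le terme $\sqrt{\Psi(x,z)}\,Q(\log x)^{7/2}$, puis contrôle des deux autres termes en choisissant $K_\varepsilon$ assez grand devant $K$, $1/c$ et $1/(2\varepsilon)$. La seule différence est cosmétique~: l'article prend $K_\varepsilon>\max(1,1/c,K,1/\varepsilon)$ et vise directement $\delta=1$, tandis que vous gardez un $\delta$ générique, ce qui ne change rien à l'argument.
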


\begin{proof}
Il suffit de montrer que pour $\varepsilon\in\oo{0,1/2}$, il existe $K_\varepsilon>0$ tel que pour tous~$(\log x)^{K_\varepsilon}<z<x$ suffisamment grands, 

$$\ssum{q\leq x^{1/2-\varepsilon}}\underset{(a,q)=1}{\max}\left| \Psi(x,z;a,q)-\frac{\Psi_q(x,z)}{\varphi(q)} \right| \ll_{\varepsilon} \frac{\Psi(x,z)}{\log x}. $$
    Soit $\varepsilon\in\oo{0,1/2}$. Prenons $K_\varepsilon > \max(1,\frac{1}{c},K,\frac{1}{\varepsilon})$ où $K>0$ et $c>0$ sont donnés par le Théorème~\ref{theo:Harper}. Pour $x\geq 3$, posons $Q:= x^{1/2-\varepsilon}$. Pour $z$ vérifiant $(\log x)^{K_\varepsilon}<z<x$,  
    \begin{equation*}
        \Psi(x,z) \geq \Psi(x,(\log x)^{K_\varepsilon})
    \end{equation*}
    par croissance de $\Psi(x,\cdot)$. D'après (1.14) de \cite{HT93}, lorsque $x\to \infty$
    \begin{equation}\label{eq:Psilogx}
    \Psi(x,z) > x^{1-\frac{1}{K_\varepsilon}+o(1)}>Q^2 .
    \end{equation}
    La dernière inégalité vient du fait que $K_\varepsilon>\frac{1}{\varepsilon}$.
    Ainsi, $Q\leq\sqrt{\Psi(x,z)}$ pour $x$ suffisamment grand, disons $x>x_\varepsilon$. Nous pouvons alors appliquer le Théorème~\ref{theo:Harper}. Or, l'expression en $v$ dans l'exponentielle est négative et la condition $z > (\log x)^{K_\varepsilon}$ implique que $z^{-c}$ est un $O(1/\log x)$ car $K_\varepsilon c>1$, donc
    \begin{align}\label{eq:HaperIntermediaire}
        \sum_{q\leq x^{1/2-\varepsilon}}\underset{(a,q) = 1}{\max}\left| \Psi(x,z;a,q)-\frac{\Psi_q(x,z)}{\varphi(q)} \right| &\ll_\varepsilon \frac{\Psi(x,z)}{\log x} + \sqrt{\Psi(x,z)}x^{1/2-\varepsilon}\log^{7/2}x.
    \end{align}
    Maintenant, puisque $\Psi(x,\cdot)$ est croissante et que $K_\varepsilon>1/\varepsilon$, l'inégalité~\eqref{eq:Psilogx} implique que pour~$x$ suffisamment grand
    $$\sqrt{\Psi(x,z)}x^{1/2-\varepsilon}\log^{7/2}x \ll_\varepsilon \frac{\Psi(x,z)}{\log x}$$
    Le résultat se déduit en réinjectant cette inégalité dans~\eqref{eq:HaperIntermediaire}.
    
\end{proof}

Un deuxième résultat récemment démontré par A. Pascadi (\cite{Pas24}, Theorem 1.1) permet de dépasser la ``barrière $1/2$'' du théorème de Harper au prix de se restreindre aux cas où $v=\frac{\log x}{\log z}$ est grand.

\begin{theo}[\cite{Pas24}, Theorem 1.1]\label{theo:Pascadi}
    Pour tous $A>0$ et $\varepsilon\in\oo{0,5/8}$, il existe une constante $C=C(A,\varepsilon)$ telle que pour dans le domaine défini par~$x>~2$ et $(\log x)^C\leq z\leq x^{1/C}$, 
    $$\ssum{q<x^{5/8-\varepsilon}\\(q,a)=1}\left|\Psi(x,z;a,q)-\frac{\Psi_q(x,z)}{\varphi(q)}\right|\ll_{a,A,\varepsilon}\frac{\Psi(x,z)}{(\log x)^A}.$$
\end{theo}
Un corollaire direct de ce théorème nous est utile:
\begin{cor}\label{cor:Pascadi}
    Pour tout $\varepsilon\in\oo{0,5/8}$, il existe une constante $C_\varepsilon$ telle que les entiers friables ont un exposant de répartition $5/8-\varepsilon$ dans le domaine $\Lambda_\varepsilon := \{x\geq 1: (\log x)^{C_\varepsilon} < z < x^{1/C_\varepsilon}\}$.
\end{cor}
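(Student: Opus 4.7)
The statement is a direct corollary of Theorem~\ref{theo:Pascadi}, so the plan is essentially a matter of matching definitions and fixing a convenient choice of parameter. Recall that to say the friable integers have distribution exponent $\theta$ in a domain $\Lambda$ means that there exists some $\delta>0$ such that
\[
\ssum{q\leq x^\theta\\(a,q)=1}\left|\Psi(x,z;a,q)-\frac{\Psi_q(x,z)}{\varphi(q)}\right|\ll_{\theta,a}\frac{\Psi(x,z)}{(\log x)^\delta}
\]
for every $(x,z)\in\Lambda$ and $a\in\Z^*$. Pascadi's theorem provides such a bound for \emph{any} prescribed $A>0$ in place of $\delta$, at the cost of a constant $C=C(A,\varepsilon)$ appearing in the admissible range of $z$.

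Fix $\varepsilon\in\oo{0,5/8}$. The plan is to apply Theorem~\ref{theo:Pascadi} with the arbitrary choice $A=1$ (any positive value would work; we only need the existence of some $\delta>0$). This yields a constant $C=C(1,\varepsilon)$ such that, for every $x>2$ satisfying $(\log x)^C\leq z\leq x^{1/C}$,
\[
\ssum{q<x^{5/8-\varepsilon}\\(q,a)=1}\left|\Psi(x,z;a,q)-\frac{\Psi_q(x,z)}{\varphi(q)}\right|\ll_{a,\varepsilon}\frac{\Psi(x,z)}{\log x}.
\]
Setting $C_\varepsilon:=C(1,\varepsilon)$ and $\delta:=1$, the displayed inequality is exactly the defining property of having distribution exponent $5/8-\varepsilon$ on the domain $\Lambda_\varepsilon=\{x\geq 1;\,(\log x)^{C_\varepsilon}<z<x^{1/C_\varepsilon}\}$, which concludes the proof.

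There is no genuine obstacle here: the whole content is an unpacking of definitions, with the only mild point being the need to absorb the strict inequalities defining $\Lambda_\varepsilon$ into the (non-strict) hypotheses of Theorem~\ref{theo:Pascadi}, which is harmless since strict inequalities imply the non-strict ones for $x$ sufficiently large. The implicit constant in $\ll_{a,\varepsilon}$ automatically matches the $\ll_{\theta,a}$ required in the definition, since $\theta=5/8-\varepsilon$ depends only on $\varepsilon$.
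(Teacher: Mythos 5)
Votre démonstration est correcte et suit exactement la voie que l'article sous-entend (le corollaire y est présenté comme « direct » et sans preuve) : il suffit d'appliquer le Théorème~\ref{theo:Pascadi} avec un choix quelconque de $A>0$, par exemple $A=1$, et de poser $C_\varepsilon := C(1,\varepsilon)$ pour retrouver mot pour mot la définition de l'exposant de répartition. Rien à redire.
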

 
\begin{proof}[Démonstration du Théorème~\ref{theo:HaretPas}]
Soit $\varepsilon\in\oo{0,1/2}$. Montrons tout d'abord la minoration~\eqref{eq:cas1HaretPas} du Théorème~\ref{theo:HaretPas}. La Proposition~\ref{prop:gammavoisins} et le Corollaire~\ref{cor:Harper} impliquent que la fonction $\gamma$ définie en~\eqref{eq:gammavoisins} vérifie les conditions~\eqref{cond1},~\eqref{cond2} et~\eqref{cond4} pour le choix $\theta = 1/2-\varepsilon$ et dans le domaine $$\Lambda_\varepsilon := \{x>x_\varepsilon:(\log x)^{K_\varepsilon}<z<x\}.$$ Nous sommes donc en mesure d'appliquer le Théorème~\ref{theo:principal}, ce qui fournit pour $1\leq v =o\left(\frac{\log x}{\log\log x}\right)$,
\begin{align*}
    \Psi(x;u,v)&\geq \varrho_{1/2-\varepsilon}(u)\Psi(x,x^{1/v})\\
    &=\left(1-\frac{u}{2}-\varepsilon u+o(1)\right)\Psi(x,x^{1/v})
\end{align*}
lorsque $x\to +\infty$ (l'explicitation de $\varrho_{1/2-\varepsilon}(u)$ étant donnée par~\eqref{eq:theta12}). Il suffit ensuite de faire dépendre $\varepsilon$ de $x$ de manière à ce qu'il tende vers $0$ suffisamment lentement pour que le corollaire~\ref{cor:Harper} reste vrai (un choix convenable est $\varepsilon = 1/\log\log x$).\\

Le même raisonnement s'applique pour montrer la minoration~\eqref{eq:cas3HaretPas} en combinant la Proposition~\ref{prop:gammavoisins} et le Corollaire~\ref{cor:Pascadi}, à la différence près que le domaine de validité pour $v$ devient~$C_\varepsilon\leq v=o\left(\frac{\log x}{\log\log x}\right)$, la constante $C_\varepsilon$ étant celle du théorème de Pascadi qui tend vers l'infini lorsque $\varepsilon$ tend vers $0$.\\

\end{proof}

\subsection{Cas conditionnel}\label{sec:voisincond}

Passons maintenant à la démonstration du Corollaire~\ref{cor:Wan}.\\

Les théorèmes de Harper et Pascadi ci-dessus sont des tentatives pour obtenir le meilleur exposant de répartition dans le domaine le plus large possible. En fait, il est conjecturé que pour tout~ $\varepsilon>0$ fixé, les entiers friables ont un exposant de répartition $\theta=1-\varepsilon$. C'est ce qui fait l'objet de la conjecture d'Elliot-Halberstam pour les entiers friables:
\begin{conj}[$EH_{friable}$]\label{conj:EH}
Soit $\varepsilon\in\oo{0,1}$. Si le rapport $v=\log x/\log z$ est fixé, alors
$$\sum_{q\leq x^{1-\varepsilon}}\underset{x'\leq x}{\max}\;\underset{(a,q)=1}{\max}\left|\Psi(x',z;a,q)-\frac{\Psi_q(x',z)}{\varphi(q)}\right|\ll_{A,\varepsilon}\frac{x}{\log^A x}.$$
\end{conj}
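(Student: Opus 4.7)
L'énoncé est une conjecture (l'analogue pour les entiers friables de la conjecture d'Elliott--Halberstam pour les nombres premiers), donc le plan ci-dessous est un programme de recherche plutôt qu'une démonstration; l'obstacle principal place le résultat hors de portée des techniques actuelles.

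Le point de départ naturel est l'orthogonalité des caractères de Dirichlet. Pour $(a,q)=1$,
$$\Psi(x',z;a,q)-\frac{\Psi_q(x',z)}{\varphi(q)} = \frac{1}{\varphi(q)}\sum_{\chi\neq \chi_0}\bar{\chi}(a)\,\Psi(x',z,\chi),$$
où $\Psi(x',z,\chi):=\sum_{n\leq x',\,P^+(n)\leq z}\chi(n)$ et $\chi_0$ est le caractère principal modulo $q$. Après prise des maxima sur $a$ et $x'\leq x$, sommation sur $q\leq x^{1-\varepsilon}$ et passage aux caractères primitifs induits, le problème se ramène à établir une annulation suffisante pour les sommes friables twistées $\Psi(x',z,\chi)$ en moyenne sur $q$ et sur les caractères non principaux.

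Le second ingrédient serait une décomposition combinatoire exploitant la structure multiplicative des entiers friables, par exemple via une itération de Buchstab ou une identité de type Heath--Brown adaptée aux friables, dans la veine de \citep{Har12a,Dra14,Pas24}. On ramènerait ainsi $\Psi(x,z,\chi)$ à des morceaux multilinéaires $\sum_{n_1\cdots n_k=n}\alpha_{n_1}\cdots\alpha_{n_k}\chi(n)$, chaque $\alpha_{n_i}$ étant supporté dans $[N_i,2N_i]$ avec $N_1\cdots N_k\asymp x$ et $N_i\leq z$. Pour les contributions bilinéaires de \emph{type II} (deux variables dans $[x^\varepsilon,x^{1-\varepsilon}]$), le grand crible multiplicatif fournit
$$\sum_{q\leq Q}\frac{q}{\varphi(q)}\ssum{\chi\bmod q\\ \chi\text{ primitif}}\Big|\sum_{m\sim M}\sum_{n\sim N}\alpha_m\beta_n\chi(mn)\Big|^2 \ll (Q^2+MN)\|\alpha\|_2^2\|\beta\|_2^2,$$
qui, combiné à Cauchy--Schwarz, traite la plage $Q\leq x^{1/2-\varepsilon}$ et reproduit les résultats inconditionnels de type Bombieri--Vinogradov.

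Pour pousser $Q$ jusqu'à $x^{1-\varepsilon}$, il faudrait suivre le programme de Bombieri--Friedlander--Iwaniec: traiter les contributions de \emph{type I} par les bornes de Weil pour sommes de Kloosterman incomplètes, et les contributions de \emph{type II} au-delà de $\sqrt{x}$ par l'exploitation des moyennes de sommes de Kloosterman via la théorie spectrale de Kuznetsov--Deshouillers--Iwaniec, couplée à une décomposition trilinéaire $n=n_1 n_2 n_3$ spécifique aux friables. L'obstacle principal, et la raison pour laquelle cet énoncé demeure conjectural, est que même pour les nombres premiers la conjecture d'Elliott--Halberstam originale reste ouverte au niveau $\theta=1-\varepsilon$; pour les friables, bien que la structure multiplicative offre des décompositions supplémentaires, l'économie au-delà de $\theta=1/2$ exige de maîtriser des sommes bilinéaires très déséquilibrées (une variable dans $[1,x^\varepsilon]$) pour lesquelles aucun outil adéquat n'est actuellement disponible.
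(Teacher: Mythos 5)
Votre lecture est la bonne : l'énoncé est précisément la conjecture d'Elliott--Halberstam pour les entiers friables, et l'article n'en donne aucune démonstration --- il la formule, l'utilise comme hypothèse pour le Corollaire~\ref{cor:Wan}, signale qu'elle est fausse pour $\varepsilon=0$ et que le résultat de Fiorilli et de la Bretèche \citep{FdlB20} interdit de prendre $\varepsilon$ de l'ordre de $\log\log x/\log x$. Votre réponse, qui refuse de prétendre à une preuve et propose seulement un programme (orthogonalité des caractères, décomposition combinatoire, grand crible, puis méthodes de type Bombieri--Friedlander--Iwaniec pour dépasser $x^{1/2}$), est donc le seul traitement honnête possible ; elle est par ailleurs cohérente avec l'état de l'art cité dans l'article, où les meilleurs exposants inconditionnels sont $1/2-\varepsilon$ (Harper \citep{Har12a}) et $5/8-\varepsilon$ sous contrainte sur $v$ (Pascadi \citep{Pas24}). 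Il n'y a donc pas de lacune à relever : aucune preuve n'existe, ni dans l'article ni ailleurs.
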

Il est aisé de voir que sous la conjecture ci-dessus, les entiers friables ont un exposant de répartition $1-\varepsilon$ dans le domaine $\{(x,x^{1/v}); x\geq 1\}$ pour tout $\varepsilon\in\oo{0,1}$. La Proposition~\ref{prop:gammavoisins}, le Théorème~\ref{theo:principal} et le Corollaire~\ref{cor:rhotheta} nous donnent alors lorsque $x\to\infty$
\begin{align*}
    \Psi(x;u,v) &\geq (\varrho_{1-\varepsilon,1}(u)+o(1))\Psi(x,x^{1/v})\\
    &\geq (\varrho(u) + O(\varepsilon))\Psi(x,x^{1/v})\\
    &\geq (\varrho(u) + O(\varepsilon))(\varrho(v)x + o(x))
\end{align*}
et il suffit de faire dépendre $\varepsilon$ de $x$ de façon à ce qu'il tende suffisamment lentement vers~$0$ lorsque~$x$ tend vers l'infini. Nous retrouvons bien la minoration dans le résultat, plus général, de Wang \cite{Wan21} qui fournit un équivalent asymptotique. \\

Il est à noter que la conjecture ci-dessus est fausse avec $\varepsilon=0$. Cependant, il semble possible d'autoriser $\varepsilon$ à tendre vers $0$ mais pas trop vite. Par exemple, un résultat de Fiorilli et La Bretèche \cite{FdlB20} implique que nous ne pouvons prendre $\varepsilon$ de l'ordre de $\log\log x/\log x$.

\begin{remark}
    Dans un travail indépendant de Bharadwaj et Rodgers~\cite{BR24}, les auteurs étudient une généralisation du problème étudié ici dans le cas particulier où l'exposant de répartition $\theta$ peut être choisi arbitrairement proche de $1$. En particulier, il est possible d'obtenir une autre preuve du théorème de Wang \cite{Wan21} avec leur méthode. Ils appliquent cette méthode à l'étude des valeurs friables de $p-1$. Nous n'avons pas exploré cette question ici car les méthodes de cribles pour les premiers donnent de meilleurs résultats, voir \cite{Lic22} pour des références. Dans le cas des valeurs de polynômes irréductibles, les auteurs de \cite{BR24} remarquent que l'hypothèse~\eqref{cond3} avec $\theta$ proche de $1$ est irréaliste, cependant il est plausible que la condition plus faible~\eqref{cond4} soit vraie pour des valeurs de~$\theta$ plus grandes que $1/g$.

\end{remark}

\subsection*{Remerciements}

Je tiens à exprimer ici mes sincères remerciements à mon directeur de thèse, Sary Drappeau, pour son aide bienveillante, ses précieux conseils et sa présence attentive lors de la réalisation de ce travail. Je remercie également Régis de la Bretèche, Cécile Dartyge et le relecteur anonyme pour les remarques et les conseils avisés. Ma reconnaissance se tourne également vers Alexandru Pascadi pour son amitié et pour m'avoir fait part de son travail lors de son séjour à Marseille. Une partie de ce travail a été réalisée lors de mon stage de M2 et de ma thèse, financés par l'Institut Fourier (UMR~5582), l'Institut de Mathématiques de Marseille (UMR 7373) et l'ED 184 de l'université d'Aix-Marseille.


\normalsize


\end{document}